\documentclass[10pt,a4paper]{amsart}
\usepackage[utf8]{inputenc}
\usepackage{amsmath}
\usepackage{amsfonts}
\usepackage{amssymb}
\usepackage{amsthm}
\usepackage{color}

\newtheorem{theorem}{Theorem}[section]
\newtheorem{corollary}[theorem]{Corollary}

\newtheorem{lemma}[theorem]{Lemma}
\newtheorem{proposition}[theorem]{Proposition}

\theoremstyle{definition}

\newcommand{\R}{\mathbb{R}}
\newcommand{\Z}{\mathbb{Z}}
\newcommand{\N}{\mathbb{N}}

\setlength{\textwidth}{\paperwidth}
\addtolength{\textwidth}{-2in}
\calclayout

\makeatletter
\@namedef{subjclassname@2020}{%
  \textup{2020} Mathematics Subject Classification}
\makeatother

\begin{document}
\title[Smoothing estimates in modulation spaces]{On smoothing estimates in modulation spaces and the nonlinear Schr\"odinger equation with slowly decaying initial data}
\author{Robert Schippa}
\email{robert.schippa@kit.edu}

\address{Department of Mathematics, Karlsruhe Institute of Technology, Englerstrasse 2, 76131 Karlsruhe, Germany}
\keywords{Smoothing estimates, Strichartz estimates, modulation spaces, $\ell^2$-decoupling, nonlinear Schr\"odinger equation}
\subjclass[2020]{Primary: 35B45, 35Q55, Secondary: 42B37.}

\begin{abstract} We show new local $L^p$-smoothing estimates for the Schr\"odinger equation in modulation spaces via decoupling inequalities. Furthermore, we probe necessary conditions by Knapp-type examples for space-time estimates of solutions with initial data in modulation and $L^p$-spaces. The examples show sharpness of the smoothing estimates up to the endpoint regularity in a certain range. Moreover, the examples rule out global Strichartz estimates for initial data in $L^p(\R^d)$ for $d \geq 1$ and $p > 2$, which was previously known for $d \geq 2$. The estimates are applied to show new local and global well-posedness results for the cubic nonlinear Schr\"odinger equation on the line. Lastly, we show $\ell^2$-decoupling inequalities for variable-coefficient versions of elliptic and non-elliptic Schr\"odinger phase functions.
\end{abstract}
\maketitle

\section{Introduction}
In this article we show new space-time estimates for the Schr\"odinger equation with slowly decaying data outside $L^2$-based Sobolev spaces:
\begin{equation}
\label{eq:NLSIntro}
\left\{
\begin{array}{cl}
i \partial_t u + \Delta u &= 0, \quad (t,x) \in \R \times \R^d, \\
u(0) &= u_0 \in X.
\end{array} \right.
\end{equation}
In the present work we consider modulation spaces $X =M^s_{p,q}(\R^d)$, $2 \leq p < \infty$, which are compared to initial data in $L^p$-based Sobolev spaces $X = L_\alpha^p(\R^d) = \langle D \rangle^{-\alpha} L^p(\R^d)$, $2 \leq p<\infty$. The latter initial data were recently considered by Dodson--Soffer--Spencer\cite{DodsonSofferSpencer2020} (see also \cite{DodsonSofferSpencer2020A}) and by Mandel \cite{Mandel2020}.

 Modulation spaces are likewise used to model slowly decaying initial data. Feichtinger introduced modulation spaces in \cite{Feichtinger1983}. He provided a more recent account with \cite{Feichtinger2006} emphasizing the role of modulation spaces in signal processing; see also the textbook by Gr\"ochenig \cite{Groechenig2001}. As the body of literature is huge, we refer to the PhD thesis of Chaichenets \cite{Chaichenets2018} and references therein for a more exhaustive account on modulation spaces in the context of Schr\"odinger equations.
 
For the definition of modulation spaces, consider the Fourier multipliers
\begin{equation*}
(\Box_k f) \hat (\xi) = \sigma_k(\xi) \hat{f}(\xi), \quad k \in \Z^d,
\end{equation*}
with $(\sigma_k)_{k \in \Z^d} \subseteq C^\infty_c(\R^d)$ a smooth partition of unity, adapted to the translated unit cubes $Q_k = k + [-\frac{1}{2},\frac{1}{2})^d$. For the precise definition, we refer to \cite[Section~2.1]{Chaichenets2018}. We can suppose that $\sigma_k(\xi) = \sigma_0(\xi -k)$. The norm is defined by
\begin{equation*}
\| f \|_{M^s_{p,q}} = \big( \sum_{k \in \Z^d} \langle k \rangle^{qs} \| \Box_k f \|_{L^p(\R^d)}^q \big)^{\frac{1}{q}}.
\end{equation*}
If $s=0$, we write $M_{p,q}$. Modulation spaces are closely related with $L^p$-spaces. For illustration, we collect embedding properties: By the embedding of $\ell^p$-spaces and Bernstein's inequality, we have 
\begin{align}
\label{eq:EmbeddingModulationI}
M^s_{p,q_1} &\hookrightarrow M^s_{p,q_2} \quad (q_1 \leq q_2), \\
\label{eq:EmbeddingModulationII}
M^s_{p_1,q} &\hookrightarrow M^s_{p_2,q} \quad (p_1 \leq p_2).
\end{align}
Rubio de Francia's inequality (\cite{RubioDeFrancia1985}) and duality yield
\begin{align}
\label{eq:EmbeddingModulationIII}
M_{p,p'} &\hookrightarrow L^p \hookrightarrow M_{p,p} \quad (2 \leq p \leq \infty), \\
\label{eq:EmbeddingModulationIV}
M_{p,p} &\hookrightarrow L^p \hookrightarrow M_{p,p'} \quad (1 \leq p \leq 2).
\end{align}
Furthermore, we can trade regularity for summability as
\begin{equation}
\label{eq:RegularitySummability}
M^{s_1}_{p,q_1}(\R^d) \hookrightarrow M^{s_2}_{p,q_2}(\R^d) \qquad s_1 - s_2 > d \big( \frac{1}{q_2} - \frac{1}{q_1} \big) > 0
\end{equation}
by applying H\"older's inequality (cf. \cite[Proposition~2.31]{Chaichenets2018}). By Plancherel's theorem, $M_{2,2} \sim L^2$. 

Let $U(t) = e^{it \Delta}$ denote the propagator of \eqref{eq:NLSIntro}, and let $Uf$ denote the free solution for $f \in \mathcal{S}'(\R^d)$. Our first result are new smoothing estimates
\begin{equation}
\label{eq:StrichartzModulation}
\| Uf \|_{L^p([-1,1] \times \R^d)} \lesssim \| f \|_{M^s_{p,q}(\R^d)}
\end{equation}
via $\ell^2$-decoupling. We prove the following:
\begin{theorem}
\label{thm:SmoothingModulationSpaces}
Suppose that $d \geq 1$, $p \geq 2$, and $1 \leq q \leq \infty$.
\begin{itemize}
\item[(A)]  If $2 \leq p \leq \frac{2(d+2)}{d}$, then
\eqref{eq:StrichartzModulation} holds true provided that $s > \max \big( 0,\frac{d}{2} - \frac{d}{q} \big)$.
\item[(B)] If $\frac{2(d+2)}{d} \leq p \leq \infty$ and $2 \leq q \leq \infty$, then \eqref{eq:StrichartzModulation} holds true provided that $s > d - \frac{d+2}{p} - \frac{d}{q} $.
\item[(C)] If $\frac{2(d+2)}{d} \leq p \leq \infty$ and $1 \leq q \leq 2$, then \eqref{eq:StrichartzModulation} holds true provided that $s>2 \big(1 - \frac{1}{q} \big) \big( \frac{d}{2} - \frac{d+2}{p} \big)$.
\item[(D)] If $q =1$, then \eqref{eq:StrichartzModulation} holds true with $s = 0$.
\item[(E)] If $d=1$, $p=4$, and $q=2$, then we find \eqref{eq:StrichartzModulation} to hold for $s=0$.
\end{itemize}
\end{theorem}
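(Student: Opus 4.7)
My plan is to combine a frequency-space dyadic decomposition with Bourgain--Demeter $\ell^2$-decoupling for the paraboloid, applied at the unit-cube scale. Write $f = \sum_N f_N$ with each $f_N$ having Fourier support in $|\xi| \sim N$ (dyadic); after a parabolic rescaling $(t,x) \mapsto (N^{-2}t, N^{-1}x)$, $\ell^2$-decoupling at $N^{-1}$-caps on the unit paraboloid should yield
\begin{equation*}
\|Uf_N\|_{L^p([-1,1] \times \R^d)} \lesssim_\varepsilon N^{\sigma(p) + \varepsilon}\Bigl(\sum_{|k|\sim N}\|U\Box_k f\|_{L^p([-1,1]\times \R^d)}^2\Bigr)^{1/2},
\end{equation*}
where $\sigma(p) = 0$ in the Strichartz range $2 \leq p \leq \frac{2(d+2)}{d}$ and $\sigma(p) = \frac{d}{2} - \frac{d+2}{p}$ for $p \geq \frac{2(d+2)}{d}$. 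Each unit-frequency block satisfies $\|U\Box_k f\|_{L^p([-1,1]\times\R^d)} \lesssim \|\Box_k f\|_{L^p}$ because the multiplier $e^{it|\xi|^2}$ restricted to a unit cube has a smooth symbol with all derivatives bounded uniformly for $|t| \leq 1$.

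For (A) and (B), I then convert the $\ell^2$-sum over the $\sim N^d$ cubes with $|k| \sim N$ into an $\ell^q$-sum: for $q \geq 2$, H\"older costs a factor $N^{d(1/2-1/q)}$, while for $q \leq 2$ the embedding $\ell^q \hookrightarrow \ell^2$ is free. Combined with the homogeneity $\|f_N\|_{M_{p,q}} \sim N^{-s}\|f_N\|_{M^s_{p,q}}$ at frequency $N$, the total power of $N$ multiplying $\|f\|_{M^s_{p,q}}$ per dyadic block is $N^{\sigma(p) + d\max(0,\,1/2-1/q) - s + \varepsilon}$, which is dyadically summable in $N$ precisely under the thresholds stated in (A) and (B); the case $q \leq 2$ of (A) then reduces to $s > 0$.

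Case (D) follows from the triangle inequality alone: $\|Uf\|_{L^p} \leq \sum_k \|U\Box_k f\|_{L^p} \lesssim \|f\|_{M_{p,1}}$. Case (C) is obtained by complex interpolation between (B) at $q = 2$ and (D) at $q = 1$, using the standard identity $[M^{s_0}_{p,q_0}, M^{s_1}_{p,q_1}]_\theta = M^{s_\theta}_{p,q_\theta}$; parametrizing by $\theta = 2/q - 1 \in [0,1]$ gives $1 - \theta = 2(1 - 1/q)$, and the interpolated threshold $(1-\theta)(d/2 - (d+2)/p)$ reproduces (C).

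For (E), I would use a bilinear $L^4$ orthogonality argument in $d = 1$. Expanding $|Uf|^2 = \sum_{k,\ell}(U\Box_k f)\,\overline{U\Box_\ell f}$, each product has space-time Fourier support over $\xi = \xi_1 + \xi_2 \in Q_k + Q_\ell$ and $\tau = -\xi_1^2 - \xi_2^2$; since the system $\xi_1 + \xi_2 = \xi$, $\xi_1^2 + \xi_2^2 = -\tau$ admits at most two solutions (related by swapping), unordered pairs $\{k,\ell\}$ contribute supports with overlap $O(1)$. Plancherel and Cauchy--Schwarz then give $\|Uf\|_{L^4}^4 \lesssim \bigl(\sum_k \|U\Box_k f\|_{L^4}^2\bigr)^2$, and unit-frequency $L^4$-boundedness concludes $\|Uf\|_{L^4} \lesssim \|f\|_{M_{4,2}}$. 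The main anticipated obstacle is the precise derivation of the unit-cube decoupling inequality above from Bourgain--Demeter's statement on the unit paraboloid and the correct tracking of $N$-powers through the parabolic rescaling, since this single ingredient drives the sharp thresholds in (A) and (B).
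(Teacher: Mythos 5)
Your route for (A)--(D) is essentially the paper's: your displayed frequency-localized inequality is Proposition \ref{prop:SmoothingFromDecouplingFrequencyLocalized} in disguise (note $\bigl(\sum_{|k|\sim N}\|\Box_k f\|_{L^p}^2\bigr)^{1/2}\sim\|f_N\|_{M_{p,2}}$), your (D) is Proposition \ref{prop:ModulationMp1Estimate}, and the passage to general $q$ via H\"older in $k$ respectively the embedding \eqref{eq:RegularitySummability}, together with interpolating (B) at $q=2$ against (D) to get (C), is exactly what the paper does; your dyadic numerology is correct. Two caveats on the step you defer. First, Theorem \ref{thm:DecouplingParaboloid} is local on balls of radius $N^2$ in the rescaled variables, so to reach $\R^d\times[0,N^2]$ you must cover space by $N^2$-balls, translate each to the origin, decouple, sum $p$-th powers over the balls and use Minkowski ($p\geq 2$) to exchange the ball-sum with the $\ell^2$-sum over caps; this leaves rapidly decaying weights, not sharp cutoffs, on the right-hand side, so the unit-cube bound must be proved against those weights, which requires the kernel estimate $\|K(\cdot,t)\|_{L^1}\lesssim(1+\lambda^{-2}|t|)^d$ also in the regime $|t|\gg\lambda^2$, the polynomial growth being absorbed by the weight's decay. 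Second, your justification of $\|U\Box_k f\|_{L^p}\lesssim\|\Box_k f\|_{L^p}$ is not right as stated: the derivatives of $e^{it|\xi|^2}$ on $Q_k$ are of size $|t|\,|k|$, not $O(1)$; one first applies the Galilean reduction $\xi=k+\eta$, after which $|k|^2$ is a constant phase, $2k\cdot\eta$ is a spatial translation, and the remaining symbol $e^{it|\eta|^2}\sigma_0(\eta)$ is uniformly nice. For (E) you genuinely diverge from the paper, which substitutes the reverse square function estimate \eqref{eq:CordobaFeffermanSquareFunction} into the same localization scheme; your bilinear $L^4$ orthogonality argument is cleaner (no rescaling or spatial covering) and does yield Proposition \ref{prop:StrichartzEstimateWithoutDerivativeLoss} and hence (E), but (i) fix the sign bookkeeping --- with the conjugate the support variables are $\xi_1-\xi_2$ and $\xi_1^2-\xi_2^2$, so work with $(Uf)^2$ rather than $|Uf|^2$ to get the sums you wrote (both compute $\|Uf\|_{L^4}^2$) --- and (ii) the orthogonality cannot be run with a sharp cutoff to $[-1,1]$, while globally in time $\|U\Box_k f\|_{L^4(\R^2)}$ is not controlled by $\|\Box_k f\|_{L^4}$ since $L^4_{t,x}$ is not a one-dimensional Strichartz exponent; insert a Schwartz time cutoff and check that the resulting $O(1)$ smearing in $\tau$ preserves the bounded overlap of the supports indexed by unordered pairs $\{k,\ell\}$.
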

The key argument in the proof of the estimates for $q \geq 2$ are the $\ell^2$-decoupling inequalities for the paraboloid due to Bourgain--Demeter (cf. \cite{BourgainDemeter2015}). It turns out that after localization in space and parabolic rescaling the $\ell^2$-decoupling inequality yields Strichartz estimates in modulation spaces by a kernel estimate. For the proof of (E), we use as substitute for decoupling a reverse square function estimate, which is known to hold without $\varepsilon$-loss. Originally, Wolff \cite{Wolff2000} brought up decoupling for the cone to analyze $L^p$-smoothing estimates for the (half-)wave equation:
\begin{equation*}
\| e^{it \sqrt{- \Delta}} f \|_{L^p([1,2] \times \R^d)} \lesssim \| f \|_{L^p_s(\R^d)}.
\end{equation*}
We refer to \cite{MockenhauptSeegerSogge1992,MockenhauptSeegerSogge1993} and \cite[Chapter~8]{Sogge2017} for further reading.

Regarding Strichartz estimates in modulation spaces, it seems the above space-time estimates were previously not investigated in the literature. Space-time estimates with an additional window decomposition were shown by B. Wang \emph{et al.} \cite{WangHudzik2007,WangHuang2007,WangZhaoBoling2006}; see also Zhang \cite{Zhang2013}. We also refer to the surveys by Wang--Huo--Hao--Guo \cite{WangHuoHaoGuo2011} and Ruzhansky--Sugimoto--Wang \cite{RuzhanskySugimotoWang2012}. These estimates were further applied to prove well-posedness for nonlinear equations. For context we refer to S. Guo's work \cite{Guo2017}, in which he proved local well-posedness of the NLS in modulation spaces $M_{2,p}$ for $2 < p < \infty$. Oh--Wang \cite{OhWang2020} globalized this using the complete integrability. Below we discuss well-posedness of the cubic NLS outside $L^2$-based Sobolev spaces in greater detail.

We shall also compare Strichartz estimates in modulation spaces with $L^p$-smoothing estimates for Schr\"odinger equations, which were first discussed by Rogers \cite{Rogers2008}:
\begin{equation}
\label{eq:SmoothingEstimates}
\| U f \|_{L^p(I \times \R^d)} \lesssim \| f \|_{L_\alpha^p(\R^d)}
\end{equation}
 Rogers showed that the validity of \eqref{eq:SmoothingEstimates} for some $\alpha$ is equivalent to validity of the adjoint Fourier restriction estimate $R^*(p \to p)$. We refer to \cite{RogersSeeger2010, LeeRogersSeeger2013} for further discussion.
 % To highlight the connection between smoothing estimates in modulation and $L^p$-spaces and $\ell^2$-decoupling
 The decoupling inequality can serve as a common base for \eqref{eq:StrichartzModulation} and \eqref{eq:SmoothingEstimates}.
 
 \medskip

Secondly, we give new necessary conditions for estimates of the kinds
\begin{equation}
\label{eq:StrichartzModulationCounter}
\| Uf \|_{L^p([-1,1] \times \R^d)} \lesssim \| f \|_{M^s_{p,q}(\R^d)}
\end{equation}
and
\begin{equation}
\label{eq:StrichartzLp}
\| U f \|_{L_t^p(I;L_x^q(\R^d))} \lesssim \| f \|_{L_s^r(\R^d)}
\end{equation}
for $I \in \{ [-1,1], \R \}$. We prove the following necessary conditions:

\begin{proposition}
\label{prop:NecessaryConditions}
Let $p \geq 2$. Necessary for \eqref{eq:StrichartzModulationCounter} to hold true is
\begin{equation}
\label{eq:DerivativeModulationSmoothing}
s \geq \max \big( 0,d - \frac{d+2}{p} - \frac{d}{q} \big).
\end{equation}
Necessary for \eqref{eq:StrichartzLp} to hold for $I=[-1,1]$ is
\begin{equation}
\label{eq:ConditionsDerivative}
s \geq \max \big( 0, d- \frac{d}{q} - \frac{2}{p} - \frac{d}{r} \big), \quad q \geq r.
\end{equation}
If $I = \R$, we have the additional conditions
\begin{equation}
\label{eq:GlobalIntegrabilityConditions}
\frac{2}{p} + \frac{d}{q} \leq \frac{d}{r}.
\end{equation}
\end{proposition}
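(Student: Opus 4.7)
The plan is to derive each necessary condition by exhibiting a family of test functions and tracking the ratio $\|Uf\|_{\mathrm{LHS}}/\|f\|_{\mathrm{RHS}}$ as a parameter degenerates. The main construction is a \emph{focused Knapp datum}: pick a focal point $(t^*,x^*)\in[-1,1]\times\R^d$ with $|t^*|\sim 1$, a smooth bump $\chi$ supported in the unit cube, a scale $\delta\geq 1$, and set
\[
\hat f(\xi)=\chi(\xi/\delta)\,e^{i(t^*|\xi|^2-x^*\cdot\xi)}.
\]
The chirp in $\hat f$ cancels the Schr\"odinger phase at $(t^*,x^*)$, so $Uf(t,x)=\int e^{i[(x-x^*)\xi-(t-t^*)|\xi|^2]}\chi(\xi/\delta)\,d\xi$ is a translated standard Knapp packet of amplitude $\delta^d$ on the tube $\{|t-t^*|\lesssim\delta^{-2},\,|x-x^*|\lesssim\delta^{-1}\}\subset[-1,1]\times\R^d$. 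A volume count then gives $\|Uf\|_{L^p([-1,1]\times\R^d)}\sim\delta^{d-(d+2)/p}$ and $\|Uf\|_{L^p_tL^q_x([-1,1]\times\R^d)}\sim\delta^{d-d/q-2/p}$.

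The right-hand sides are evaluated by stationary phase after the rescaling $\xi=\delta\eta$. The stationary point $\eta_s=-(x-x^*)/(2t^*\delta)$ yields $|f(x)|\sim|t^*|^{-d/2}$ on the ball $|x-x^*|\lesssim|t^*|\delta$, with the local wave number $\xi_s(x)=-(x-x^*)/(2t^*)$ ranging over $[-\delta/2,\delta/2]^d$. For $|t^*|\sim 1$ this gives $\|f\|_{L^r}\sim\delta^{d/r}$, and since $\hat f$ is supported in $|\xi|\lesssim\delta$ the crude bound $\|f\|_{L^r_s}=\|\langle D\rangle^s f\|_{L^r}\lesssim\delta^s\|f\|_{L^r}\sim\delta^{s+d/r}$ already suffices for the necessity argument. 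An analogous stationary-phase analysis applied to each modulation projection $\Box_kf$ (an integral with amplitude $\sigma_k\chi(\cdot/\delta)$ and a non-degenerate quadratic phase on a unit $\zeta$-cube around $k$) yields $\|\Box_kf\|_{L^p}\sim 1$ uniformly in $k\in\Z^d$ with $|k|\lesssim\delta$, so that
\[
\|f\|_{M^s_{p,q}}^q\sim\sum_{|k|\lesssim\delta}\langle k\rangle^{qs}\sim\delta^{d+qs},\qquad \|f\|_{M^s_{p,q}}\sim\delta^{s+d/q}.
\]
Inserting these into \eqref{eq:StrichartzModulationCounter} and \eqref{eq:StrichartzLp} and letting $\delta\to\infty$ produces the Knapp thresholds $s\geq d-(d+2)/p-d/q$ and $s\geq d-d/q-2/p-d/r$.

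The remaining conditions are extracted from two simpler examples. Taking $f(x)=\phi(x)e^{iNx_1}$ with Schwartz $\phi$ of unit-size Fourier support, Galilean invariance shows that $\|Uf\|_{L^pL^q([-1,1]\times\R^d)}\sim 1$ is independent of $N$, while both $\|f\|_{M^s_{p,q}}$ and $\|f\|_{L^r_s}$ scale like $\langle N\rangle^s$; sending $N\to\infty$ forces $s\geq 0$. For $q\geq r$ and for the global condition \eqref{eq:GlobalIntegrabilityConditions} I would use the dilation family $g_\lambda(x)=\phi(x/\lambda)$ with $\hat\phi$ supported in a fixed annulus. The parabolic-scaling identity $Ug_\lambda(t,x)=U\phi(t/\lambda^2,x/\lambda)$ combined with Fubini yields $\|Ug_\lambda\|_{L^p_tL^q_x([-1,1]\times\R^d)}\sim\lambda^{d/q}$ as $\lambda\to\infty$ (after the substitution $s=t/\lambda^2$ the time integral runs over a window of length $\sim\lambda^{-2}$ where the integrand is essentially constant), while $\|g_\lambda\|_{L^r_s}\sim\lambda^{d/r}$ with no derivative weight (the Fourier support lies in $|\xi|\leq\lambda^{-1}\leq 1$). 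The local comparison gives $q\geq r$, and replacing $[-1,1]$ by $\R$ gives instead $\|Ug_\lambda\|_{L^pL^q(\R\times\R^d)}\sim\lambda^{d/q+2/p}\|U\phi\|_{L^pL^q(\R\times\R^d)}$, hence $2/p+d/q\leq d/r$.

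The main technical point is the stationary-phase analysis of $\Box_kf$ uniformly in $k$: one must verify that $\chi(\xi/\delta)$ is essentially equal to one on the support of $\sigma_k$ for the bulk of indices $|k|\lesssim\delta/2$ (the boundary-layer indices contribute only lower-order corrections), and that the Hessian $2t^*I$ of the phase is non-degenerate, which uses that $|t^*|$ is bounded away from zero. A secondary subtlety is that in the global dilation argument $\|U\phi\|_{L^pL^q(\R\times\R^d)}$ must be both finite and nonzero for at least one $\phi$; finiteness is automatic once \eqref{eq:StrichartzLp} is assumed for $\phi$, so no circularity arises.
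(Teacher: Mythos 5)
Your proposal is correct and follows essentially the same route as the paper: the chirped ``focused Knapp datum'' is exactly the isotropic Knapp example of Rogers--Seeger used in Section \ref{section:NecessaryConditions} (with the same stationary-phase evaluation of the $L^r_s$- and $M^s_{p,q}$-norms and the same refocusing lower bound $\delta^{d-d/q-2/p}$), and the modulated bump $\phi(x)e^{iNx_1}$ is the high-frequency anisotropic Knapp example giving $s\geq 0$. The only cosmetic difference is that for $q\geq r$ and \eqref{eq:GlobalIntegrabilityConditions} you use low-frequency dilates $\phi(x/\lambda)$ under parabolic rescaling instead of the paper's unit-frequency anisotropic slab $\hat g_\varepsilon$; both are coherent long-time examples yielding the identical exponent count.
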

We refer to Section \ref{section:NecessaryConditions} for the discussion of further conditions.

\medskip

Proposition \ref{prop:NecessaryConditions} shows that the estimates in Theorem \ref{thm:SmoothingModulationSpaces} are sharp up to the endpoint regularity for $2 \leq p \leq \frac{2(d+2)}{d}$ and for ($\frac{2(d+2)}{d} \leq p \leq \infty$ and $2 \leq q \leq \infty$). Moreover, the examined examples show that global estimates \eqref{eq:StrichartzLp} for $I= \R$ and $s = 0$ are impossible for $p > 2$. Mandel \cite{Mandel2020} previously showed this for $d \geq 2$ with a more explicit example.

\begin{corollary}
\label{cor:NonexistenceStrichartz}
Suppose that $d \geq 1$, $p,q \in [1,\infty]$, and $r \in (2,\infty]$. Then, there is no $C$ such that the estimate
\begin{equation}
\label{eq:StrichartzEstimate}
\| U f \|_{L_t^p(\R,L_x^q(\R^d))} \leq C \| f \|_{L^r(\R^d)}
\end{equation}
holds true for any $f \in L_{rad}^r(\R^d)$.
\end{corollary}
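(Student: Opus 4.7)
The plan is to derive a contradiction by combining the two necessary conditions of Proposition \ref{prop:NecessaryConditions}. Assume, for contradiction, that \eqref{eq:StrichartzEstimate} holds for all $f \in L^r_{\mathrm{rad}}(\R^d)$ with some constant $C$. Restricting the time integration from $\R$ to $[-1,1]$ preserves the bound, so the local necessary condition \eqref{eq:ConditionsDerivative} with $s=0$ applies, giving $d/q + 2/p + d/r \geq d$, and the global integrability condition \eqref{eq:GlobalIntegrabilityConditions} gives $2/p + d/q \leq d/r$. Substituting the second inequality into the first,
\[
d \leq \frac{d}{q} + \frac{2}{p} + \frac{d}{r} \leq \frac{d}{r} + \frac{d}{r} = \frac{2d}{r},
\]
so $r \leq 2$, contradicting the hypothesis $r > 2$.

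The only nontrivial point is that both necessary conditions of Proposition \ref{prop:NecessaryConditions} must remain valid when the test functions are restricted to radial ones. The scaling condition \eqref{eq:GlobalIntegrabilityConditions} follows from the dilations $f_\lambda(x) = \lambda^{d/r} f(\lambda x)$ of any radial Schwartz bump, which preserve radiality, so this is immediate. For the Knapp-type condition \eqref{eq:ConditionsDerivative}, the standard Knapp frequency slab is not radial, and one would instead use a radial analogue: for $d \geq 2$, the rotational average over $SO(d)$ of a Knapp slab, whose Fourier support is a thin spherical annulus $\{|\xi| \in [N, N+\delta]\}$ with the appropriate density coming from the orbital integral, and for $d = 1$, the even extension $\hat{f} = \mathbf{1}_{[-N-\delta,-N]\cup[N,N+\delta]}$. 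One then reruns the stationary-phase/kernel analysis on this radialized example to verify that it still reproduces the same ratio between $\|Uf\|_{L^p([-1,1]) L^q_x}$ and $\|f\|_{L^r}$, and hence the same inequality \eqref{eq:ConditionsDerivative}.

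The main expected obstacle is exactly this verification — that the rotational symmetrization (respectively, even extension) does not destroy the concentration structure on which the Knapp-type bound depends, since the averaging introduces additional oscillation that must be controlled through a careful stationary-phase analysis of the resulting Bessel-type oscillatory integrals. Once this is in hand, the algebraic combination above immediately yields the corollary.
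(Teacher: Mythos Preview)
Your algebraic argument is correct and is exactly what the paper does: combine the scaling relation $\frac{2}{p}+\frac{d}{q}=\frac{d}{r}$ (from dilating any radial bump) with the lower bound $d-\frac{2}{p}-\frac{d}{q}\le \frac{d}{r}$ from the isotropic Knapp example, yielding $d-\frac{d}{r}\le \frac{d}{r}$ and hence $r\le 2$.

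The concern you flag as ``the main expected obstacle'' is not actually an obstacle. The condition $d-\frac{d}{q}-\frac{2}{p}-\frac{d}{r}\le s$ in \eqref{eq:ConditionsDerivative} does \emph{not} come from the anisotropic Knapp slab; it comes from the isotropic Knapp example \eqref{eq:FamilyFunctions}, and those functions $f_\lambda$ are explicitly constructed to be radially symmetric (the paper says so, and the integrand depends only on $|\xi|$ and $|x|$). So there is no need to rotationally average a slab or to rerun any stationary-phase analysis on Bessel-type integrals --- the radial test functions with the required concentration are already provided in Section~\ref{section:NecessaryConditions}. The anisotropic Knapp examples are used in the paper only for the conditions $s\ge 0$ and $q\ge r$, neither of which you need here. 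Once you recognize this, your proof collapses to two lines and coincides with the paper's.
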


A major difference to $L^p$-based Sobolev spaces, $p \neq 2$, is that the propagator $U(t)$ is bounded on modulation spaces. Already by B\'{e}nyi \emph{et al.} \cite{BenyiGroechenigOkoudjouRogers2007} was proved the bound for $2 \leq p \leq \infty$:
\begin{equation}
\label{eq:FixedTimeEstimateModulationSpaces}
\| U(t) \|_{M^s_{p,q}(\R^d) \to M^s_{p,q}(\R^d)} \lesssim \langle t \rangle^{d \big| \frac{1}{2} - \frac{1}{p} \big|}.
\end{equation}
Chaichenets \cite[Section~3.1]{Chaichenets2018} observed by duality that the same bound holds for $1 \leq p \leq 2$, and sharpness was initially shown by Cordero--Nicola \cite{CorderoNicola2009}. In \cite[Section~3.1]{Chaichenets2018} Gaussians were used as window functions in the definition of modulation spaces. Then the sharpness follows by computation of the kernel for a window. In Section \ref{section:NecessaryConditions} we recover the sharpness of \eqref{eq:FixedTimeEstimateModulationSpaces} by the discussed examples.

\begin{corollary}[{\cite[Section~3.1]{Chaichenets2018}}]
\label{cor:FixedTimeEstimateModulation}
The estimate \eqref{eq:FixedTimeEstimateModulationSpaces} is sharp for $1 \leq p,q \leq \infty$ and any $s,t \in \R$.
\end{corollary}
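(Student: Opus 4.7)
The plan is to test the fixed-time estimate on a single frequency bump whose Fourier support lies in one translated unit cube $Q_{k_0}$, and to read the ratio of modulation-space norms off directly. Since the bound $\langle t \rangle^{d|1/2-1/p|}$ is invariant under $p \leftrightarrow p'$, the duality $(M^s_{p,q})^* = M^{-s}_{p',q'}$ reduces the task to the range $1 \le p \le 2$; the range $p > 2$ then follows because time-reversal and complex conjugation act as isometries on every modulation space, yielding $\|U(t)\|_{M^s_{p,q} \to M^s_{p,q}} = \|U(t)\|_{M^{-s}_{p',q'} \to M^{-s}_{p',q'}}$, and the exponent $d|1/2 - 1/p'|$ matches.

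Fix $k_0 \in \Z^d$ and define $f$ by $\hat f(\xi) = \sigma_0(\xi - k_0)$. Because $\sigma_0$ is compactly supported, only finitely many pieces $\Box_k f$ (those with $|k - k_0| \lesssim 1$) are nonzero, and each satisfies $\|\Box_k f\|_{L^p} \sim \|\check\sigma_0\|_{L^p} \sim 1$, so that $\|f\|_{M^s_{p,q}} \sim \langle k_0 \rangle^s$. Since $\widehat{U(t) f}(\xi) = e^{-it|\xi|^2}\sigma_0(\xi - k_0)$ has the same support as $\hat f$, the same counting gives $\|U(t) f\|_{M^s_{p,q}} \sim \langle k_0 \rangle^s \|U(t) f\|_{L^p}$, and the task reduces to producing an $L^p$ lower bound of size $\langle t \rangle^{d(1/p - 1/2)}$ for $1 \le p \le 2$.

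The substitution $\xi = k_0 + \eta$ in the Fourier inversion of $U(t) f$ factors out a unimodular phase and gives $U(t) f(x) = e^{ik_0 \cdot x - it|k_0|^2}\, (U(t) \check\sigma_0)(x - 2tk_0)$, so the desired $L^p$ bound is independent of $k_0$ and reduces to $\|U(t) \check\sigma_0\|_{L^p} \gtrsim |t|^{d(1/p - 1/2)}$ for $|t| \ge 1$. I would establish this via the Fresnel formula for the Schr\"odinger kernel together with the expansion $|x-y|^2/(4t) = |x|^2/(4t) - x \cdot y/(2t) + O(|y|^2/t)$ and the Schwartz decay of $\check\sigma_0$, which produces the asymptotic
\[
U(t) \check\sigma_0(x) = (4\pi i t)^{-d/2} e^{i|x|^2/(4t)} \sigma_0\big(x/(2t)\big) + O\big(|t|^{-d/2-1}\big).
\]
On $\{|x| \le t/2\}$ the main term has modulus $\gtrsim |t|^{-d/2}$ and dominates the error for $|t|$ large; integrating over this ball of volume $\sim |t|^d$ produces the claimed $L^p$ lower bound. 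The only genuine computation is this stationary-phase asymptotic, which is essentially the classical dispersive estimate with matching lower bound; everything else is the bookkeeping observation that a single-cube frequency support reduces the modulation norm to an $L^p$ norm.
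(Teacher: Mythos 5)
Your proposal is correct and follows essentially the same route as the paper: both test the operator on a single-cube frequency bump (the paper on a radially symmetric Schwartz function with $\operatorname{supp}\hat f \subset B(0,1)$, you on $\hat f = \sigma_0(\cdot - k_0)$), reduce the modulation norm to an $L^p$ norm by the finite-cube observation, obtain the $L^p$ lower bound $\gtrsim |t|^{d(1/p - 1/2)}$ on $\{|x| \lesssim |t|\}$ from the dispersive/stationary-phase asymptotic (the paper citing H\"ormander, you via the Fresnel kernel expansion), and finish by duality for $p > 2$. The only inessential difference is the extra shift parameter $k_0$ and the Galilean conjugation that removes it, which serves no purpose here since $\langle k_0\rangle^s$ cancels on both sides; you could simply take $k_0 = 0$.
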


 We think that this gives a more robust proof of sharpness. In fact, all the results proved in Sections \ref{section:NecessaryConditions} and \ref{section:DecouplingStrichartzEstimates} have straight-forward counterparts for fractional Schr\"odinger equations
\begin{equation}
\label{eq:FEQ}
\left\{ \begin{array}{cl}
i \partial_t u + (-\Delta)^{\alpha/2} u &= 0, \quad (t,x) \in \R \times \R^d, \\
u(0) &= u_0 \in M^s_{p,q}(\R^d) \end{array} \right.
\end{equation}
for $\alpha > 1$ or non-elliptic Schr\"odinger equations, e.g.,
\begin{equation}
\label{eq:NonellipticEQ}
\left\{ \begin{array}{cl}
i \partial_t u + (\partial_1^2 - \partial_2^2) u &= 0, \quad (t,x) \in \R \times \R^2, \\
u(0) &= u_0 \in M^s_{p,q}(\R^2). \end{array} \right.
\end{equation}
The latter follow by considering the decoupling inequalities from \cite{BourgainDemeter2017GeneralDecoupling}.
Also generalizations to variable coefficients seem possible under additional assumptions. 
As decoupling inequalities for variable coefficient versions of \eqref{eq:NonellipticEQ} are not explicit in the literature, we take the opportunity to prove them here. As the results are more technical to state, we refer to Subsection \ref{subsection:IntroductionVariableCoefficient}. Section \ref{section:VariableCoefficientDecoupling} is based on Chapter~7 of the author's PhD thesis \cite{Schippa2019PhDThesis}.

We point out that the examples from Section \ref{section:NecessaryConditions} rely on (non-)stationary phase estimates, for which the precise form of the phase function is not important, as long as the characteristic surface has non-vanishing Gaussian curvature.

We apply the Strichartz estimates to the cubic nonlinear Schr\"odinger equation:
\begin{equation}
\label{eq:NLSIntroduction}
\left\{ \begin{array}{cl}
i \partial_t u + \Delta u &= |u|^2 u \quad (t,x) \in \R \times \R, \\
u(0) &= f \in D,
\end{array} \right.
\end{equation}
where $D \in \{ L^p_\alpha(\R), M^s_{p,2}(\R) + L^2(\R) \}$. We prefer to work in these slightly larger spaces as the Duhamel term is in $L^2(\R)$. Denote the solution space by $S$. For the proof of local well-posedness we apply the $L^p$-smoothing estimates to the homogeneous equation and use the usual $L^2$-based inhomogeneous Strichartz estimates for the Duhamel integral. We shall see that using modulation spaces allows to save Sobolev regularity:  \eqref{eq:NLSIntroduction} has a local solution if $f \in M^{s}_{6,2}$ for any $s>0$, but considering $f \in L^6_{s}(\R)$ requires $s > \frac{1}{3}$. Moreover, our local results hold likewise for the focusing equation, see below.

\begin{theorem}
\label{thm:LWPL4L6}
Let $\varepsilon>0$. \eqref{eq:NLSIntroduction} is analytically well-posed in the spaces $D \in \{ L^4_\varepsilon(\R), M_{4,2}(\R) \}$, $S_T= L_t^{\frac{24}{7}}([0,T],L^4(\R))$, and in the spaces $D \in \{ L^6_{\frac{1}{3}+\varepsilon}(\R), M^{\varepsilon}_{6,2}(\R) + L^2(\R) \}$, $S_T= L_t^3([0,T],L^6(\R))$, i.e., there is $T = T(\| f \|_{D})$ such that there is a unique solution in $S_T$.

 Furthermore, for $D \in \{ M_{4,2}(\R) , M^{\varepsilon}_{6,2}(\R) + L^2(\R) \}$, we have $u \in C([0,T], D+ L^2(\R))$ with continuous dependence on the initial data.
\end{theorem}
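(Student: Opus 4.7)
The plan is a standard Banach fixed-point argument in $S_T$ for
\[
\Phi(u)(t) = U(t) f - i \int_0^t U(t-s) (|u|^2 u)(s)\, ds,
\]
producing a time of existence $T$ depending only on $\|f\|_D$; the two cases $p\in\{4,6\}$ are parallel and differ only in the estimate on the linear flow. First I bound the free solution in $L^p_{t,x}$. For $p = 6$ with $d=1$, Theorem \ref{thm:SmoothingModulationSpaces}(B) applied at $q=2$ (for which $d - (d+2)/p - d/q = 0$) gives $\|U(t) g\|_{L^6_{t,x}([-1,1]\times\R)} \lesssim \|g\|_{M^\varepsilon_{6,2}}$; for the $L^2$ summand the admissible Strichartz pair $(6,6)$ in $d=1$ yields $\|U(t) g\|_{L^6_{t,x}} \lesssim \|g\|_{L^2}$; and for $D = L^6_{1/3+\varepsilon}(\R)$ I invoke Rogers' $L^p$-smoothing at the decoupling exponent $p = 2(d+2)/d = 6$. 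Hölder in time then produces $\|U(t) f\|_{S_T} \leq T^{1/6} \|U(t)f\|_{L^6_{t,x}} \lesssim T^{1/6} \|f\|_D$. The $p=4$ case is identical: Theorem \ref{thm:SmoothingModulationSpaces}(E) together with the corresponding Rogers estimate yields $\|U(t)f\|_{L^4_{t,x}} \lesssim \|f\|_D$, which a time Hölder transfers to $S_T = L^{24/7}_t L^4_x$ with a $T^{1/24}$ prefactor.

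Next, the Duhamel term is handled via dual Strichartz, the admissible exponents being chosen so that the cubic Hölder scaling matches the dual norm. In the $p=6$ case, the $(\infty,2) \leftrightarrow (1,2)$ endpoint together with Christ--Kiselev gives
\[
\Big\| \int_0^t U(t-s) F(s)\, ds \Big\|_{L^6_{t,x} \cap L^\infty_t L^2_x} \lesssim \|F\|_{L^1_t L^2_x},
\]
while $\||u|^2 u\|_{L^1_t L^2_x([0,T]\times\R)} = \|u\|_{L^3_t L^6_x}^3$; a final time Hölder yields $\|\Phi(u) - U(\cdot)f\|_{S_T} \lesssim T^{1/6} \|u\|_{S_T}^3$. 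In the $p=4$ case, the dual of the admissible pair $(8,4)$ gives $\|\int_0^t U(t-s) F\, ds\|_{L^8_t L^4_x} \lesssim \|F\|_{L^{8/7}_t L^{4/3}_x}$, and $\||u|^2 u\|_{L^{8/7}_t L^{4/3}_x} = \|u\|_{L^{24/7}_t L^4_x}^3$, after which a time Hölder produces an analogous $T^{1/6}$ gain.

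Combining the two steps yields $\|\Phi(u)\|_{S_T} \leq C T^\sigma (\|f\|_D + \|u\|_{S_T}^3)$ with $\sigma>0$, and similarly $\|\Phi(u)-\Phi(v)\|_{S_T} \leq C T^\sigma (\|u\|_{S_T}^2 + \|v\|_{S_T}^2) \|u-v\|_{S_T}$. Choosing $T$ small in terms of $\|f\|_D$ closes Banach's fixed point on a ball in $S_T$, and since $\Phi$ is polynomial in $u$, analyticity in the initial data is automatic. The additional continuity statement follows because the Duhamel piece lies in $C([0,T], L^2(\R))$ by the $(\infty,2)$ endpoint Strichartz, while $U(\cdot)f \in C([0,T], D)$ by the boundedness \eqref{eq:FixedTimeEstimateModulationSpaces} of $U(t)$ on modulation spaces, so that $u \in C([0,T], D + L^2(\R))$. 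The only delicate point is the matching of the cubic Hölder scaling with admissible Strichartz dual exponents, which is precisely what dictates the choices $S_T = L^{24/7}_t L^4_x$ and $L^3_t L^6_x$; the modulation regularity $s=\varepsilon$ (respectively $s=0$) is then the minimal one permitted by the decoupling-based estimates of Theorem \ref{thm:SmoothingModulationSpaces}.
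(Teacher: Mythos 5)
Your proposal is correct and follows essentially the same route as the paper: the homogeneous flow is bounded via Theorem \ref{thm:SmoothingModulationSpaces} (parts (B) and (E)) and Rogers' smoothing estimate, the Duhamel term via the dual Strichartz pairs $(8,4)$ and $L^1_tL^2_x$ with the same Hölder bookkeeping, and continuity in $D+L^2$ via the $C_0$-group property and endpoint Strichartz; the paper merely packages the contraction through the Bejenaru--Tao quantitative well-posedness framework rather than running Banach's fixed point by hand. (Your invocation of Christ--Kiselev is unnecessary for the non-sharp $L^1_tL^2_x$ inhomogeneity, but harmless.)
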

We remark that one could also add regularity in $L^p$-spaces to bound the propagator in $L_x^p(\R)$ via fixed-time estimates in order to obtain continuous curves in $L^p(\R) + L^2(\R)$. However, one always leaves the space of initial values as the Schr\"odinger propagator is unbounded in $L^p$ for $p \neq 2$.

\medskip

As the NLS is one of the most prominent nonlinear dispersive equations, the body of literature on its well-posedness is vast. To put our results into context, we only mention few results and also refer to the references therein. Tsutsumi \cite{Tsutsumi1987} applied classical $L^2$-based Strichartz estimates to prove global well-posedness in $L^2(\R)$. This is the limit of analytic well-posedness \cite{ChristCollianderTao2003,KenigPonceVega2001} in $L^2$-based Sobolev spaces. Recently, Harrop-Griffiths--Killip--Vi\c{s}an \cite{HarropGriffithsKillipVisan2020} proved sharp global well-posedness in $H^s(\R)$, $s>-1/2$, using complete integrability. Outside $L^2$-based Sobolev spaces, we mention the early works by Vargas--Vega \cite{VargasVega2001} and Gr\"unrock \cite{Gruenrock2005} in Fourier Lebesgue spaces. Hyakuna \cite{Hyakuna2020} proved well-posedness results in $L^p$-spaces for some $1<p<2$, and Correia \cite{Correia2018} considered generalized energy spaces $\dot{H}^1(\R^d) \cap L^p(\R^d)$ for $p > 2$. Chaichenets \emph{et al.} \cite{Chaichenets2017} showed the first global results in modulation spaces $M_{p,p'}$ for $p$ sufficiently close to $2$ without smallness assumption on the initial data; see also \cite{Chaichenets2019,Chaichenets2020}. Bourgain \cite{Bourgain1993A} showed how linear Strichartz estimates combined with Galilean invariance and orthogonality arguments yield low regularity well-posedness in the periodic case (see also \cite{Bourgain1993B} for KdV-like equations). We believe that the arguments extend to modulation spaces.

\medskip

Furthermore, Kenig--Ponce--Vega \cite{KenigPonceVega2001} pointed out that the Galilean invariance of the nonlinear Schr\"o\-din\-ger equation 
\begin{equation*}
u(t,x) \to e^{iNx} e^{it N^2} u(t,x-2tN)
\end{equation*}
 indicates mild ill-posedness for negative Sobolev regularity: Suppose that the existence time of the solution to $u_0 \in M^{s}_{p,q}(\R)$ with $s<0$ and $p,q<\infty$ is given by $T=T(\| u_0 \|_{M^s_{p,q}})$. As $N \to \infty$, we find $\| e^{iN \cdot} u(0,\cdot) \|_{M^s_{p,q}} \to 0$, but $T$ is invariant. The restriction on $p,q$ is required to assume that $\mathcal{S}(\R) \hookrightarrow M^s_{p,q}(\R)$ is dense. This argument can be made rigid in the focusing case:
 \begin{equation}
 \label{eq:Focusing}
 \left\{ \begin{array}{cl}
  i \partial_t u + \Delta u &= -|u|^2 u, \quad (t,x) \in \R \times \R, \\
 u(0) &= u_0 \in M^s_{p,q}(\R).
 \end{array} \right.
 \end{equation}
As in \cite{KenigPonceVega2001}, we consider the two-parameter family of traveling wave solutions derived from the stationary solution by scaling and Galilean invariance:
\begin{equation*}
u_{\omega,N}(x,t) = e^{-it (N^2 - \omega^2)} e^{iNx} f_\omega(x-2tN), \quad f_\omega(x) = \omega f(\omega x), \quad f(x) = \sqrt{2} \text{sech}(x).
\end{equation*}
By this, \eqref{eq:Focusing} is not locally uniformly well-posed in $M^s_{p,q}(\R)$ for $s<0$. Thus, the corresponding result of Theorem \ref{thm:LWPL4L6} for the focusing equation covers analytic well-posedness up to the endpoint. We believe that this is also true for the defocusing equation as shown by Christ--Colliander--Tao \cite{ChristCollianderTao2003} in Sobolev spaces of negative regularity. Details on mild ill-posedness will be given elsewhere. Well-posedness with mere continuous dependence can still be possible due to complete integrability (cf. \cite{HarropGriffithsKillipVisan2020}). For more ill-posedness results, i.e., norm inflation and infinite loss of regularity, we refer to Bhimani--Carles \cite{BhimaniCarles2020} and references therein.

\medskip

In the recent work \cite{DodsonSofferSpencer2020} Dodson--Soffer--Spencer (see also \cite{DodsonSofferSpencer2020A}) used fixed-time $L^p$-estimates
\begin{equation}
\label{eq:FixedTimeEstimate}
\| e^{it \Delta} f \|_{L^p(\R^d)} \lesssim \| f \|_{L^p_\alpha(\R^d)}
\end{equation}
and Picard iteration to prove well-posedness of \eqref{eq:NLSIntroduction} with $X_0 = L^p_s(\R)$ for $2<p<\infty$. We remark that the sharp derivative loss $\alpha = 2d \big| \frac{1}{2} - \frac{1}{p} \big|$ for $L^p$-estimates \eqref{eq:FixedTimeEstimate} is known since the work of Fefferman--Stein \cite{FeffermanStein1972} and Miyachi \cite{Miyachi1981}. However, the sharp estimates were not used in \cite{DodsonSofferSpencer2020}, which resulted in high regularity. By now it is well-understood how the fixed-time $L^p$-estimates for the Schr\"odinger propagator follow by the embeddings \eqref{eq:EmbeddingModulationI}-\eqref{eq:EmbeddingModulationIV} and the invariance of modulation spaces under the propagator. We show how passing through modulation spaces improves the results in \cite{DodsonSofferSpencer2020} for $p=4n+2$, $n \geq 2$, and also show global results with arguments due to Dodson \emph{et al.} \cite{DodsonSofferSpencer2020}. We contend that the splitting method applied in \cite{VargasVega2001,Chaichenets2017}, further reaching back to Bourgain's seminal contribution \cite{Bourgain1998}, is related with the present approach to prove the global result. 

\begin{theorem}
\label{thm:GlobalWellposedness}
Let $T > 0$ and $s > \frac{3}{2}$. If $f \in M_{4,2}^{s}(\R)$, then there exists a unique solution $u \in L_t^{\frac{24}{7}}([0,T],L^4) \cap C([0,T],M_{4,2}^{s}(\R) + L^2(\R))$ to \eqref{eq:CubicNLSAbstract}, which depends continuously on the initial data, i.e., for any $T>0$ and $f_n \to f \in M_{4,2}^s(\R)$, we have
\begin{equation*}
\| u_n - u \|_{L_t^{\frac{24}{7}}([0,T],L^4) \cap C([0,T],M_{4,2}^{s}(\R) + L^2(\R))} \to 0.
\end{equation*}

If $f \in M^s_{6,2}(\R)$, then there exists a unique solution $u \in L_t^3([0,T],L^6) \cap C([0,T],M_{6,2}^{s}(\R) + L^2(\R))$ with continuous data-to-solution mapping $f \mapsto u$ as above.
\end{theorem}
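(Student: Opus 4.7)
The plan is to extend the local solution from Theorem \ref{thm:LWPL4L6} to a global one by a Gronwall-type control of the $L^2$-norm of the nonlinear part. Writing
\[
u(t) = U(t)f + w(t), \qquad w(t) = -i \int_0^t U(t-s)(|u|^2 u)(s)\, ds,
\]
the local theory yields a maximal interval $[0, T^*)$ on which $w \in C([0,T^*), L^2) \cap L^{24/7}_t([0,T^*), L^4)$ (respectively $L^3_t L^6_x$ in the $p=6$ case), while the modulation-space part $U(\cdot)f$ exists for all $t$ by the fixed-time bound \eqref{eq:FixedTimeEstimateModulationSpaces}. Hence global existence reduces to ruling out finite-time blow-up of $\|w(t)\|_{L^2}$.

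First, I would derive the energy identity for $w$. Using $\partial_t w = i\Delta w - i|u|^2 u$ and integrating against $\bar w$ one obtains
\[
\tfrac{d}{dt} \|w(t)\|_{L^2}^2 = 2 \, \mathrm{Im}\! \int_\R |u|^2 u\, \bar w \, dx.
\]
Substituting $u = U(t)f + w$ and expanding, the terms $|w|^2 w \bar w$ and $|U(t)f|^2 |w|^2$ are real and drop out, leaving a sum of contributions of the form $\int |U(t)f|^a w^b \bar w^c$ with $a \geq 1$ and $a + b + c = 4$. Each such term is bounded pointwise in $t$ by Hölder, producing pairings of mixed norms of $U(\cdot)f$ with mixed norms of $w$.

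Second, I would control the arising space-time norms of $U(\cdot)f$ globally in $t$ on $[0, T]$. This combines (i) the local smoothing estimate \eqref{eq:StrichartzModulation} of Theorem \ref{thm:SmoothingModulationSpaces} on unit time intervals; (ii) the fixed-time bound \eqref{eq:FixedTimeEstimateModulationSpaces}, which allows the smoothing estimate to be restarted on each unit interval at the cost of only a polynomial factor in $t$; and (iii) the modulation-space embeddings \eqref{eq:EmbeddingModulationI}--\eqref{eq:RegularitySummability} to convert $M^s_{p,2}$-regularity into the $L^{q}_x$-regularity demanded by Hölder. The threshold $s > 3/2$ enters here at (iii): it is what is required so that $M^s_{p,2}$ embeds into a space strong enough (essentially $L^\infty_x$ after the smoothing step) to make the pure $|Uf|^3 \bar w$ term absolutely integrable in time and keep the mixed $U f \cdot w$-terms at most quadratic in $\|w\|_{L^2}$.

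Third, putting these ingredients together and using the local Strichartz control of $w$ in $L^{24/7}_t L^4_x$ (respectively $L^3_t L^6_x$) on unit subintervals, the integrated energy identity becomes an inequality
\[
\|w(t)\|_{L^2}^2 \leq A(t) + \int_0^t B(s)\, \|w(s)\|_{L^2}^2\, ds,
\]
with $A, B$ polynomial in $t$ and in $\|f\|_{M^s_{p,2}}$. Gronwall's inequality then bounds $\|w\|_{L^\infty_t([0,T], L^2)}$ for every finite $T$, forcing $T^* = \infty$. Continuous dependence is obtained by running exactly the same estimates on the difference $u_n - u$, using the Lipschitz property of the cubic nonlinearity on bounded subsets of the solution space.

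The main obstacle I expect is step two: the estimates in Theorem \ref{thm:SmoothingModulationSpaces} are inherently of unit time length, and because $U(t)$ is unbounded on $L^p$ for $p \neq 2$, the only tool that stitches them into long-time bounds is the polynomially growing fixed-time bound \eqref{eq:FixedTimeEstimateModulationSpaces}. The interplay of this polynomial growth across three copies of $U(\cdot)f$ in the cubic nonlinearity is precisely what imposes the regularity threshold $s > 3/2$, and closing the estimate so that the $w$-dependence on the right-hand side remains at most quadratic is the delicate point of the argument.
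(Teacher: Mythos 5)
Your reduction to a blow-up alternative for the $L^2$-norm of the Duhamel part (your $w$, the paper's $v$) is exactly the paper's Lemma \ref{lem:BlowUp}, and the idea of a Gr\"onwall argument is also the right one. However, there is a genuine gap in step three: a Gr\"onwall inequality for the mass $\|w(t)\|_{L^2}^2$ alone does not close. After expanding $\mathrm{Im}\int |u|^2u\,\bar w$ with $u=U(t)f+w$, the surviving term that is cubic in $w$, schematically $\int |U(t)f|\,|w|^3\,dx$, is \emph{not} ``at most quadratic in $\|w\|_{L^2}$'': by H\"older it is controlled by $\|U(t)f\|_{L^4}\|w\|_{L^4}^3$ (or $\|U(t)f\|_{L^\infty}\|w\|_{L^3}^3$), and in one dimension Gagliardo--Nirenberg gives $\|w\|_{L^4}^4\lesssim \|w\|_{L^2}^3\|w_x\|_{L^2}$, so you cannot absorb this term into $C(t)\,(1+\|w\|_{L^2}^2)$ without a priori control of $\|w_x\|_{L^2}$ or $\|w\|_{L^4}$ pointwise in time. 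Appealing to the local Strichartz norm $L^{24/7}_tL^4_x$ of $w$ on unit subintervals is circular, since the length of the interval on which that norm is controlled depends on $\|w(t_0)\|_{L^2}$, which is the quantity being estimated.

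The paper (following Dodson--Soffer--Spencer) closes the loop by propagating, in addition to the mass $M(v)$, a modified energy $\tilde E(v)=\int \tfrac12|v_x|^2+\tfrac14(|v+w|^4-|w|^4)\,dx$, and running Gr\"onwall for the combined quantity $1+M(v)+\tilde E(v)$: the cubic term in $\partial_t M$ is then bounded by $E(v)^{3/4}\le 1+E(v)$, and $\partial_t\tilde E$ is in turn bounded by $1+M+E$ (Proposition \ref{prop:GlobalExistence}). This is also where the threshold $s>\tfrac32$ actually originates: computing $\partial_t\tilde E$ produces $w_t=iw_{xx}$, and after integration by parts one needs $\langle\partial_x\rangle U(t)f\in L^4\cap L^\infty$ (respectively $L^6\cap L^\infty$), which via the fixed-time modulation-space bounds and Sobolev embedding costs $\tfrac32+\varepsilon$ derivatives, cf.\ \eqref{eq:M4Embeddings}--\eqref{eq:M6Embeddings}. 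Your accounting of $s>\tfrac32$ through iterated local smoothing estimates for $U(\cdot)f$ does not match this mechanism; indeed, in a mass-only scheme no derivative on $U(\cdot)f$ would be needed at all, which is another sign that the energy functional is the missing ingredient.
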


\medskip

\emph{Outline of the paper.} In Section \ref{section:NecessaryConditions} we give necessary conditions for $L^p$-smoothing estimates in modulation spaces and Strichartz estimates in $L^p$-based Sobolev spaces. These rule out global Strichartz estimates for initial data in $L^r(\R^d)$, $r>2$. In Section \ref{section:DecouplingStrichartzEstimates} we show Theorem \ref{thm:SmoothingModulationSpaces} via $\ell^2$-decoupling. In Section \ref{section:LWPNLS} the estimates are applied to show new local and global well-posedness for the NLS. In Section \ref{section:VariableCoefficientDecoupling} we show decoupling inequalities for variable coefficient versions of Schr\"odinger equations.

\section{Necessary conditions}
\label{section:NecessaryConditions}

The purpose of this section is to collect necessary conditions to find the following estimates to hold:
\begin{equation}
\label{eq:ModulationSpaceCounter}
\| U f \|_{L_t^p([-1,1],L^q(\R^d))} \lesssim \| f \|_{M^s_{r,t}(\R^d)},
\end{equation}
and
\begin{equation}
\label{eq:StrichartzCounter}
\| U f \|_{L_t^p(I;L^q_x(\R^d))} \lesssim \| f \|_{L^r_s(\R^d)}.
\end{equation}
We shall use three Knapp-type examples, which are essentially well-known in the literature \cite{Tao2006,RogersSeeger2010}. However, it seems that these have not been examined in the above contexts.

\medskip

We start with the \textit{anisotropic Knapp example at unit frequencies}, which was previously used to determine the range of integrability coefficients for the $L^2$-based Strichartz estimate \eqref{eq:StrichartzCounter} (cf. \cite{Tao2006,Strichartz1977}). Consider
\begin{equation}
\label{eq:AnisotropicKnapp}
\hat{g}_\varepsilon(\xi ) = \chi_{(1-\varepsilon,1+\varepsilon)}(\xi_1) \chi_{(-\varepsilon,\varepsilon)}(\xi_2) \ldots \chi_{(-\varepsilon,\varepsilon)}(\xi_d).
\end{equation}
We compute
\begin{equation}
\label{eq:NormsAnisotropicKnapp}
\| f \|_{L_s^r(\R^d)} \sim \varepsilon^{d- \frac{d}{r}}, \text{ and } \| g_\varepsilon \|_{M^s_{r,t}(\R^d)} \sim \varepsilon^{d - \frac{d}{r}}
\end{equation}
for any $s \in \R$, $1 \leq r,t \leq \infty$.\\
We observe
\begin{equation*}
\begin{split}
U g_\varepsilon(x,t) &= C_d \int_{\R^d} e^{i(x.\xi + t |\xi|^2)} \chi_{(1-\varepsilon,1+\varepsilon)}(\xi_1) \chi_{(-\varepsilon,\varepsilon)}(\xi_2)\ldots \chi_{(-\varepsilon,\varepsilon)}(\xi_d) d\xi \\
&= C_d e^{i x_1} e^{it} \int_{\R^d} e^{i((x+2t e_1).\xi + t |\xi|^2)} \chi_{(-\varepsilon,\varepsilon)}(\xi_1) \ldots \chi_{(-\varepsilon,\varepsilon)}(\xi_d) d\xi.
\end{split}
\end{equation*}
Hence, $|Uf(x,t)| \gtrsim \varepsilon^d$ provided that $t \in [-\varepsilon^{-2},\varepsilon^{-2}]$ and $|x+2t e_1| \lesssim \varepsilon^{-1}$.

Suppose that \eqref{eq:StrichartzCounter} holds true with $I = \R$. Then,
\begin{equation*}
\varepsilon^{d - \frac{2}{p} - \frac{d}{q}} \lesssim \| U f \|_{L_t^p(\R; L_x^q(\R^d))} \lesssim \| f \|_{M^s_{r,t}(\R^d)} \sim \| f \|_{L^r_s(\R^d)} \lesssim \varepsilon^{d- \frac{d}{r}},
\end{equation*}
which requires 
\begin{equation}
\label{eq:IntegrabilityConditionsStrichartzI}
\frac{2}{p} + \frac{d}{q} \leq \frac{d}{r}.
\end{equation}
For $I = [-1,1]$, we find
\begin{equation*}
\varepsilon^{d - \frac{d}{q}} \lesssim \| U f \|_{L_t^p(I;L_x^q(\R^d))} \lesssim \| f \|_{M^s_{r,t}(\R^d)} \sim \| f \|_{L^r_s(\R^d)} \lesssim \varepsilon^{d - \frac{d}{r}}.
\end{equation*}
As $\varepsilon \to 0$, this yields $q \geq r$.

\medskip

The variant of the \textit{anisotropic Knapp-example at high frequencies} rules out gain of derivatives: For $\lambda \in 2^{\mathbb{N}}$ consider
\begin{equation}
\label{eq:HighFrequencyKnapp}
\hat{f}_\lambda(\xi) = \chi_{(\lambda-1,\lambda+1)}(\xi_1) \chi_{(-1,1)}(\xi_2) \ldots \chi_{(-1,1)}(\xi_d).
\end{equation}
We note that
\begin{equation}
\label{eq:NormHighFrequencyKnapp}
\| f_\lambda \|_{M^s_{r,t}(\R^d)} \sim \| f_\lambda \|_{L^r_s(\R^d)} \sim \lambda^s.
\end{equation}

Setting $\varepsilon = \lambda^{-1}$, we find
\begin{equation*}
f_\lambda(x) = \lambda^d g_\varepsilon(\lambda x).
\end{equation*}
Furthermore, we find by change of variables and the computation for the unit frequency anisotropic Knapp-example:
\begin{equation*}
\| U g_\varepsilon(\lambda \cdot) \|_{L_t^p([-1,1];L^q_x(\R^d))} = \lambda^{- \frac{2}{p} - \frac{d}{q}} \| U g_\varepsilon \|_{L_t^p([-\lambda^2;\lambda^2];L_x^q(\R^d))} \gtrsim \lambda^{- d}.
\end{equation*}
By \eqref{eq:NormHighFrequencyKnapp} the validity of \eqref{eq:StrichartzCounter} or \eqref{eq:ModulationSpaceCounter} requires $s \geq 0$.

\medskip

Finally, we examine the \textit{isotropic Knapp-example}, previously inspected in \cite{RogersSeeger2010}. Let $\theta: \R^n \rightarrow \R$ denote a radial function, supported in $\{ 2^{-2} \leq | \xi | \leq 4 \}$ and equal to $1$ on $\{ 2^{-1} \leq |\xi| \leq 2 \}$. We consider the radially symmetric functions
\begin{equation}
\label{eq:FamilyFunctions}
f_\lambda(x) = \big( \frac{\lambda}{2 \pi} \big)^{n} \int \theta(\xi) e^{ i (\lambda \langle x, \xi \rangle - \lambda^2 |\xi|^2) } d\xi.
\end{equation}
By stationary phase, it was computed in \cite[p.~50]{RogersSeeger2010} that
\begin{equation*}
|f_\lambda(x) | \lesssim 1, \quad |x| \gg \lambda: \; |f_{\lambda}(x)| \leq C_N (\lambda^{-1} |x|)^{-N}.
\end{equation*}
Hence,
\begin{equation*}
\| f_\lambda \|_{L_s^r(\R^d)} \lesssim \lambda^{s+ \frac{d}{r}}.
\end{equation*}
Moreover,
\begin{equation*}
\| f_\lambda \|_{M^s_{r,t}(\R^d)} \lesssim \lambda^{s + \frac{d}{t}}.
\end{equation*}
The latter estimate follows as there are $\sim \lambda^d$ unit cubes in the $\lambda$-annulus and all of them give rise to a comparable $L^p$-norm.

Again by (non-)stationary phase, we find the lower bound (cf. \cite[p.~50]{RogersSeeger2010})
\begin{equation}
\label{eq:LowerBoundRadialSymmetricSol}
\big( \int_{1- \lambda^{-2}/10}^1 \| e^{it \Delta} f_\lambda \|^p_{L^q(\R^d)} \big)^{1/p} \gtrsim \lambda^{d - \frac{d}{q} - \frac{2}{p}}.
\end{equation}
Hence, we find for \eqref{eq:StrichartzCounter} to hold:
\begin{equation*}
\lambda^{d - \frac{d}{q} - \frac{2}{p}} \lesssim \| U f_\lambda \|_{L_t^p([-1,1];L_x^q(\R^d))} \lesssim \| f_{\lambda} \|_{L^r_s(\R^d)} \lesssim \lambda^{\frac{d}{r} + s}.
\end{equation*}
As $\lambda \to \infty$, we find
\begin{equation}
\label{eq:DerivativesLpSpacesConsequences}
d - \frac{d}{q} - \frac{2}{p} \leq \frac{d}{r} + s.
\end{equation}
We find the following for \eqref{eq:ModulationSpaceCounter} to be true:
\begin{equation*}
\lambda^{d - \frac{d}{q} - \frac{2}{p}} \lesssim \| U f_\lambda \|_{L_t^p([-1,1];L_x^q(\R^d))} \lesssim \| f_{\lambda} \|_{M^s_{r,t}(\R^d)} \lesssim \lambda^{\frac{d}{t} + s}.
\end{equation*}
Taking $\lambda \to \infty$, we find
\begin{equation}
\label{eq:DerivativesModulationNormsConsequence}
d - \frac{d}{q} - \frac{2}{p} \leq \frac{d}{t} + s.
\end{equation}

We are ready for the proof of Proposition \ref{prop:NecessaryConditions}:
\begin{proof}[Proof of Proposition \ref{prop:NecessaryConditions}]
The claim \eqref{eq:DerivativeModulationSmoothing} follows from the anisotropic Knapp-example at high frequencies and the isotropic Knapp-example \eqref{eq:DerivativesModulationNormsConsequence}.\\
Likewise, \eqref{eq:ConditionsDerivative} follows. The condition $q \geq r$ follows from considering the anisotropic Knapp-example at low frequencies for finite times; the additional integrability condition \eqref{eq:GlobalIntegrabilityConditions} follows from considering the anisotropic Knapp-example globally in time. The proof is complete.
\end{proof}

We give the proof of Corollary \ref{cor:NonexistenceStrichartz}, which asserts non-existence of global Strichartz estimates
\begin{equation}
\label{eq:GlobalStrichartzEstimateCounter}
\| U f \|_{L_t^p(\R;L_x^q(\R^d))} \lesssim \| f \|_{L^r(\R^d)}.
\end{equation}

\begin{proof}[Proof of Corollary \ref{cor:NonexistenceStrichartz}]
In addition to the examples from above, we note the scaling condition
\begin{equation}
\label{eq:ScalingCondition}
\frac{2}{p} + \frac{d}{q} = \frac{d}{r}.
\end{equation}

By considering the isotropic Knapp-example \eqref{eq:DerivativesLpSpacesConsequences}, using \eqref{eq:ScalingCondition}, and assuming \eqref{eq:GlobalStrichartzEstimateCounter}, we find
\begin{equation*}
\lambda^{d-\frac{2}{p} - \frac{d}{q}} = \lambda^{d - \frac{d}{r}} \lesssim \lambda^{\frac{d}{r}}. 
\end{equation*}
For $r > 2$ and $\lambda \to \infty$ this is impossible.
\end{proof}

Lastly, we show Corollary \ref{cor:FixedTimeEstimateModulation}, which asserts the sharp time-dependence in the fixed time estimate in modulation spaces.

\begin{proof}[Proof of Corollary \ref{cor:FixedTimeEstimateModulation}]
For $|t| \lesssim 1$ there is nothing to prove. For $|t| \gg 1$, we consider again a non-trivial Schwartz initial data, radially symmetric, with $\text{supp} \hat{f} \subseteq B(0,1)$.
For this we find by non-stationary phase
\begin{equation*}
\big| \int e^{ix.\xi} e^{it |\xi|^2} \hat{f}(\xi) d\xi \big| \lesssim_N (1+|x|)^{-N}
\end{equation*}
for $|x| \gg |t|$. Moreover,
\begin{equation*}
\big| \int e^{ix.\xi} e^{it |\xi|^2} \hat{f}(\xi) d\xi \big| \gtrsim (1+|t|)^{-\frac{d}{2}}
\end{equation*}
for $|x| \lesssim |t|$ by \cite[Theorem~7.7.5]{Hoermander2003}. Hence, for $|t| \geq 1$,
\begin{equation*}
\| U(t) f \|_{L^p(\R^d)} \gtrsim |t|^{-\frac{d}{2}} |t|^{\frac{d}{p}}.
\end{equation*}
This shows
\begin{equation*}
\| U(t) \|_{M^s_{p,q} \to M^s_{p,q}} \gtrsim \langle t \rangle^{d | \frac{1}{2} - \frac{1}{p} |}
\end{equation*}
for $1 \leq p \leq 2$. By duality, we find the bound for $2 \leq p \leq \infty$.
\end{proof}

\section{$\ell^2$-decoupling implies Strichartz in modulation spaces}
\label{section:DecouplingStrichartzEstimates}
In this section we show Theorem \ref{thm:SmoothingModulationSpaces}. In the remainder of the section let $I = [0,1]$ and $p \geq 2$. Define
\begin{equation}
\label{eq:DecouplingExponent}
s(p,d) = 
\begin{cases}
0, &\quad 2 \leq p \leq \frac{2(d+2)}{d}, \\
\frac{d}{2} - \frac{d+2}{p}, &\quad \frac{2(d+2)}{d} < p \leq \infty.
\end{cases}
\end{equation}
To conclude Theorem \ref{thm:SmoothingModulationSpaces}, it is enough to prove the 
estimates
\begin{align}
\label{eq:q2Estimate}
\| U f \|_{L^p(I \times \R^d)} &\lesssim \| f \|_{M^{s(p,d)+\varepsilon}_{p,2}(\R^d)}, \\
\label{eq:q1Estimate}
\| U f \|_{L^p(I \times \R^d)} &\lesssim \| f \|_{M_{p,1}(\R^d)}.
\end{align}
The remaining estimates follow after frequency localization and H\"older's inequality in the $\ell^q$-spaces.

\eqref{eq:q2Estimate} is a consequence of $\ell^2$-decoupling. After decoupling, this follows via a kernel estimate. The proof of \eqref{eq:q1Estimate} also invokes the kernel estimate. We set
\begin{equation*}
\mathcal{E} f(x,t) = \int_{\R^d} e^{i(x.\xi + t |\xi|^2)} f(\xi) d\xi.
\end{equation*}
Recall the $\ell^2$-decoupling theorem due to Bourgain--Demeter \cite{BourgainDemeter2015}:
\begin{theorem}[$\ell^2$-decoupling for the paraboloid]
\label{thm:DecouplingParaboloid}
Let $\text{supp}(f) \subseteq \{ \xi \, : \, |\xi| \leq 4 \}$. Then, for any $R \geq 1$, we find the following estimate to hold:
\begin{equation*}
\| \mathcal{E} f \|_{L^p(B_{d+1}(0,R))} \lesssim_\varepsilon R^\varepsilon R^{s(p,d)} \big( \sum_{\Box: R^{-\frac{1}{2}}-\text{cube}} \| \mathcal{E} f_\Box \|^2_{L^p(w(B(0,R))} \big)^{\frac{1}{2}}.
\end{equation*}
\end{theorem}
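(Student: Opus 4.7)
The plan is to follow the Bourgain--Demeter strategy combining induction on scales with a multilinear Kakeya input. Let $D_p(R)$ denote the best constant in the stated inequality at scale $R$; the goal is to prove $D_p(R) \lesssim_\varepsilon R^{s(p,d)+\varepsilon}$ for every $\varepsilon>0$ by induction on $R \in 2^{\N}$. The $L^2$ endpoint is immediate from Plancherel and the essentially disjoint Fourier supports of the pieces $\mathcal{E}f_\Box$, so the content lies in the intermediate range, controlled by the critical index $p_c = 2(d+2)/d$. Interpolation between $L^2$ and $L^{p_c}$ (and trivial $L^\infty$ control on the other side) then yields the claimed exponent $s(p,d)$ throughout $2 \leq p \leq \infty$, so the core task is establishing the bound at $p=p_c$.

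The heart of the proof is a Bourgain--Guth broad-narrow decomposition at an intermediate scale $K^{-1}$ with $K = K(\varepsilon) \ll R^{1/2}$. Partition $\text{supp}(f)$ into $K^{-1}$-caps $\tau$. At each $x$ in a ball of radius $R$, either $|\mathcal{E}f(x)|$ is dominated by $d+1$ pairwise transverse caps $\tau_1,\dots,\tau_{d+1}$ (the \emph{broad} regime) or it concentrates in a thin slab that looks like a lower-dimensional paraboloid (the \emph{narrow} regime). In the broad regime I would invoke the Bennett--Carbery--Tao multilinear restriction inequality, which supplies sharp control of $\prod_{i} |\mathcal{E}f_{\tau_i}|^{1/(d+1)}$ at $L^{p_c}$ with no $\varepsilon$-loss. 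In the narrow regime, each slab is affinely equivalent, via parabolic rescaling, to a decoupling problem at the smaller scale $R/K^2$, where the inductive hypothesis applies; contributions of different slabs are recombined using decoupling for the lower-dimensional paraboloid, handled by a nested induction on dimension with the trivial case $d=1$ as base.

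The main obstacle is closing the recursion quantitatively. Each broad-narrow iteration incurs a multiplicative loss $K^{C}$ from the multilinear-to-linear passage, whereas the gain per scale is only $R^\varepsilon$. By choosing $K=K(\varepsilon)$ suitably large and iterating $O(\log R/\log K)$ times, one absorbs the losses into a geometric series and recovers $D_{p_c}(R) \lesssim_\varepsilon R^\varepsilon R^{s(p_c,d)}$. This step is what forces the use of multilinear Kakeya in its sharp $\varepsilon$-free form, and it also requires careful stability of the weighted norm $L^p(w_B)$ under parabolic rescaling and under localization to fat tubes---the bookkeeping that makes the right-hand side of the stated inequality robust enough to survive $O(\log R)$ iterations.
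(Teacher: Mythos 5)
This statement is not proved in the paper at all: Theorem \ref{thm:DecouplingParaboloid} is \emph{recalled} from Bourgain--Demeter \cite{BourgainDemeter2015} and used as a black box, so there is no in-paper argument to compare yours against. Judged on its own terms as a sketch of the Bourgain--Demeter proof, your proposal reproduces the correct architecture (induction on scales, Bourgain--Guth broad--narrow decomposition at scale $K^{-1}$, parabolic rescaling in the narrow case, lower-dimensional induction, a transversal multilinear input in the broad case), but it has a genuine gap at the decisive step. Invoking the Bennett--Carbery--Tao multilinear restriction inequality in the broad regime bounds $\prod_i |\mathcal{E}f_{\tau_i}|^{1/(d+1)}$ by products of $L^2$ norms of the $f_{\tau_i}$; it does \emph{not} produce the $\ell^2$-decoupled right-hand side $\big(\sum_\Box \|\mathcal{E}f_\Box\|_{L^p(w_{B_R})}^2\big)^{1/2}$. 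If multilinear restriction alone closed the induction, decoupling would have followed already from Bourgain--Guth. The actual engine of the proof is the multiscale iteration in which one alternates $L^2$ orthogonality on small balls with ``ball inflation'' (a step powered by the multilinear Kakeya inequality) to pass information from scale $\delta$ to scale $\delta^2$, interpolating the resulting quantities $A_p(q,B,\ldots)$ between $L^2$ and $L^p$; this whole mechanism is absent from your outline. Relatedly, the assertion that BCT is available ``with no $\varepsilon$-loss'' is not accurate as usually stated (the $\varepsilon$-free endpoint requires Guth's polynomial-method proof of multilinear Kakeya), and in any case the $R^\varepsilon$ losses are absorbed by the choice of $K=K(\varepsilon)$, so sharpness of the multilinear input is not where the difficulty lies.

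One further point of care: the sharp supercritical decoupling constant, with $R^{-1/2}$-caps on a ball of radius $R$, is $R^{\frac{d}{4}-\frac{d+2}{2p}}$ (this is what interpolating the critical case with the trivial Cauchy--Schwarz bound $R^{d/4}$ at $p=\infty$ delivers, and it is the exponent $\alpha(p,k)$ used in Section \ref{section:VariableCoefficientDecoupling}). The exponent $R^{s(p,d)}=R^{\frac{d}{2}-\frac{d+2}{p}}$ appearing in the statement is the square of this; it is consistent with the application in Proposition \ref{prop:SmoothingFromDecouplingFrequencyLocalized} only because there the theorem is applied with $R=\lambda^2$ and the gain is recorded as $\lambda^{s(p,d)}$. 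Your interpolation step should be phrased so as to land on $R^{\frac{d}{4}-\frac{d+2}{2p}}$, which in particular implies the stated (weaker) bound.
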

In the above display $w(B(0,R))$ denotes a smooth version of the indicator function on $B(0,R)$ with high polynomial decay off $B(0,R)$; see Subsection \ref{subsection:IntroductionVariableCoefficient} for further explanation. We show that Theorem \ref{thm:DecouplingParaboloid} implies Strichartz estimates in modulation spaces firstly for frequency localized functions:

\begin{proposition}
\label{prop:SmoothingFromDecouplingFrequencyLocalized}
Let $\text{supp}(\hat{f}) \subseteq \{ \xi: \frac{\lambda}{4} \leq |\xi| \leq 4 \lambda \}$. Then,
we find the following estimate to hold
\begin{equation}
\label{eq:StrichartzEstimateDec}
\| U f \|_{L^p(\R^d) \times I)} \lesssim_\varepsilon \lambda^{\varepsilon+s(p,d)} \| f \|_{M_{p,2}}.
\end{equation}
for any $\varepsilon>0$.
\end{proposition}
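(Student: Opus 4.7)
The plan is to combine a parabolic rescaling with the $\ell^2$-decoupling of Theorem \ref{thm:DecouplingParaboloid} and a kernel estimate on unit-frequency pieces. The natural scale is $R=\lambda^2$: the rescaling $\xi=\lambda\eta$ sends the unit cubes $Q_k$ defining the $M_{p,2}$-norm to $\lambda^{-1}$-cubes in $\eta$, which are exactly the $R^{-1/2}$-caps appearing in decoupling at scale $R=\lambda^2$.

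Concretely, I first set $g(\eta)=\hat f(\lambda\eta)$, so $\operatorname{supp} g\subseteq\{1/4\le|\eta|\le 4\}$, and a change of variables gives $Uf(x,t)=\lambda^d\mathcal E g(\lambda x,\lambda^2 t)$, hence
\begin{equation*}
\|Uf\|_{L^p(\R^d\times I)}\;=\;\lambda^{d-(d+2)/p}\|\mathcal E g\|_{L^p(\R^d\times[0,\lambda^2])}.
\end{equation*}
I then cover $\R^d\times[0,\lambda^2]$ by boundedly overlapping balls $B_j$ of radius $R=\lambda^2$, apply Theorem \ref{thm:DecouplingParaboloid} on each $B_j$, raise to the $p$-th power and sum in $j$. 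Because $p\ge 2$, Minkowski's inequality in the form
\begin{equation*}
\Bigl(\sum_j\Bigl(\sum_\Box a_{j,\Box}^2\Bigr)^{p/2}\Bigr)^{1/p}\;\le\;\Bigl(\sum_\Box\Bigl(\sum_j a_{j,\Box}^p\Bigr)^{2/p}\Bigr)^{1/2}
\end{equation*}
applied with $a_{j,\Box}=\|\mathcal E g_\Box\|_{L^p(w(B_j))}$ lets the $j$-sum pass inside the $\ell^2_\Box$-sum; together with the rapid decay of the weights $w(B_j)$, whose sum is dominated by a smoothed indicator of a slight thickening of the slab, this yields a slab-level decoupled bound.

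On a single cap I undo the rescaling: a $\lambda^{-1}$-cube $\Box$ at $\eta_\Box$ corresponds to a unit cube $Q_k$ with $k=\lambda\eta_\Box$, and
\begin{equation*}
\|\mathcal E g_\Box\|_{L^p(\R^d\times[0,C\lambda^2])}\;=\;\lambda^{-d+(d+2)/p}\|Uf_k\|_{L^p(\R^d\times[0,C])},
\end{equation*}
where $f_k$ denotes the piece of $f$ with Fourier support in $Q_k$. The Galilean factoring
\begin{equation*}
\int e^{i(x\cdot\xi+t|\xi|^2)}\sigma_0(\xi-k)\hat f(\xi)\,d\xi\;=\;e^{i(x\cdot k+t|k|^2)}\!\int e^{i((x+2tk)\cdot\eta+t|\eta|^2)}\sigma_0(\eta)\hat f(\eta+k)\,d\eta
\end{equation*}
reduces $Uf_k(\cdot,t)$ to convolution in $x$ with the low-frequency Schr\"odinger kernel $K_t(y)=\int e^{iy\cdot\eta+it|\eta|^2}\sigma_0(\eta)\,d\eta$, whose $L^1$-norm is uniformly bounded for $t\in[0,C]$ by (non-)stationary phase, so $\|Uf_k\|_{L^p(\R^d\times[0,C])}\lesssim\|\Box_k f\|_{L^p}$.

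Assembling the three ingredients, the $\lambda^{-d+(d+2)/p}$ from the kernel step cancels the $\lambda^{d-(d+2)/p}$ from the initial rescaling, leaving only the loss $\lambda^{\varepsilon+s(p,d)}$ contributed by decoupling, which is exactly the claimed estimate. The main obstacle I anticipate is the Minkowski/weight-summation step: one must check that gluing the local $R$-ball bounds into a slab bound through the weights $w(B_j)$ does not cost an additional power of $\lambda$. The remaining rescalings and the kernel estimate are routine.
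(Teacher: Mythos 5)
Your proposal is correct and follows essentially the same route as the paper: parabolic rescaling to unit frequency, covering the slab by $\lambda^2$-balls and applying Bourgain--Demeter decoupling on each, Minkowski's inequality (using $p\ge 2$) to pass the ball-sum inside the $\ell^2$-sum over caps, and a Galilean-invariance/$L^1$-kernel estimate on each cap. The weight-tail issue you flag is handled in the paper exactly as you anticipate, via the rough kernel bound $\|K(\cdot,t)\|_{L^1}\lesssim(1+\lambda^{-2}|t|)^d$ for $|t|\ge\lambda^2$, which is beaten by the rapid decay of the summed weights in time.
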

\begin{proof}
We rescale to unit frequencies
\begin{equation*}
\| U f \|_{L^p(\R^d \times I)} = \lambda^{-\frac{d+2}{p}} \| U g \|_{L^p(\R^d \times [0,\lambda^2])}
\end{equation*}
with $g(x) = f(x/\lambda)$ and $\text{supp} \hat{g} \subseteq B(0,4)$.\\
Next, we cover $\R^d$ with a finitely overlapping family of $\lambda^2$-balls $B \in \mathcal{B}$ to write
\begin{equation*}
\| U g \|^p_{L^p(\R^d \times [0,\lambda^2])} \leq \sum_{B \in \mathcal{B}} \| U g \|^p_{L^p(B \times [0,\lambda^2])}.
\end{equation*}
We use translation invariance to shift the center of $B \in \mathcal{B}$ into the origin:
\begin{equation*}
\| U g \|_{L^p( B \times [0,\lambda^2])} = \| U g_B \|_{L^p(B(0,\lambda^2) \times [0,\lambda^2])}.
\end{equation*}
This is amenable to $\ell^2$-decoupling:
\begin{equation}
\label{eq:ell2DecoupingLocal}
\| U g_B \|_{L^p(B_d(0,\lambda^2) \times [0,\lambda^2])} \lesssim_\varepsilon \lambda^{s(p,d) + \varepsilon}
\big( \sum_{\Box: \lambda^{-1} - \text{cube}} \| U g_{B, \Box} \|^2_{L^p(w_{B_{d+1}(0,\lambda^2))})} \big)^{1/2}
\end{equation}
and by inverting the translation
\begin{equation}
\label{eq:ell2DecouplingLocalII}
\| U g \|_{L^p(B \times [0,\lambda^2])} \lesssim_\varepsilon \lambda^{s(p,d)+\varepsilon} \big( \sum_{\Box: \lambda^{-1} - \text{cube}} \| U g_{\Box} \|_{L^p(w_{B \times [0,\lambda^2]})}^2 \big)^{1/2}.
\end{equation}
We sum \eqref{eq:ell2DecouplingLocalII} over $B \in \mathcal{B}$ in $\ell^p$ and use Minkowski's inequality to find
\begin{equation*}
\| U g \|_{L^p(\R^d \times [0,\lambda^2])} \lesssim_\varepsilon \lambda^{s(p,d) + \varepsilon} \big( \sum_{\Box: \lambda^{-1} - \text{cube}} \| \chi_{\lambda^2}(t) U g_{\Box} \|^2_{L^p(\R^{d+1})} \big)^{1/2}
\end{equation*}
with $\chi_{\lambda^2}$ denoting a rapidly decaying function off $[0,\lambda^2]$.

The claim follows by a fixed-time kernel estimate.
We compute the kernel with $a_\lambda$ denoting the indicator function of the $\lambda^{-1}$-box centered at $\xi_0$:
\begin{equation*}
K(x,t) = \int e^{i(x.\xi + t |\xi|^2)} a_\lambda(\xi) d\xi
\end{equation*}
Via a change of variables and Galilean symmetry, we find
\begin{equation*}
\begin{split}
K(x,t) &= \lambda^{-d} e^{ix.\xi_0} e^{i \frac{t |\xi_0|^2}{\lambda^2}} \int e^{i \big( \frac{x.\xi'}{\lambda} + \frac{2 \xi_0. \xi'}{\lambda} t \big)} e^{it \frac{|\xi|^2}{\lambda^2}} a(\xi) d\xi = \lambda^{-d} e^{ix.\xi_0} e^{i \frac{t |\xi_0|^2}{\lambda^2}} \int e^{i \frac{\xi'.x}{\lambda}} e^{it \frac{|\xi'|^2}{\lambda^2}} a(\xi') d\xi'.
\end{split}
\end{equation*}
By non-stationary phase, we find the following estimate for $|t| \leq \lambda^2$:
\begin{equation*}
|K(x,t)| \lesssim_N \lambda^{-d} (1+ \lambda^{-1} |x|)^{-N}.
\end{equation*}
For $|t| \geq \lambda^2$, we have the rough bound:
\begin{equation*}
|K(x,t)| \lesssim_N \lambda^{-d}
\begin{cases}
 1, \quad &|x| \lesssim |t|/\lambda, \\
 \big( 1 + \lambda^{-1} |x| \big)^{-N}, \quad &|x| \gg |t|/\lambda.
 \end{cases}
\end{equation*}
Hence, we find
\begin{equation*}
\| K(\cdot, t) \|_{L^1(\R^d)} \lesssim \big(1 + \lambda^{-2} |t| \big)^d.
\end{equation*}
%
%Hence, a fixed time kernel estimate gives for $|t| \leq \lambda^2$
%\begin{equation*}
%\| U g_\Box (t) \|_{L^p(\R^d)} \lesssim \| g_\Box \|_{L^p(\R^d)}.
%\end{equation*}
Integration in time gives
\begin{equation*}
\| \chi_{\lambda^2}(t) U g_\Box \|_{L^p(\R^{d+1})} \lesssim \lambda^{\frac{2}{p}} \| g_\Box \|_{L^p(\R^d)}.
\end{equation*}
We conclude the proof by inverting the change of variables:
\begin{equation*}
\begin{split}
\| U f \|_{L^p(\R^d \times [0,1])} &= \lambda^{s(p,d)+\varepsilon - \frac{d+2}{p}} \big( \sum_{\Box: \lambda^{-1} \text{cube}} \| U g_\Box \|^2_{L^p(w_{B_{d+1}(0,\lambda^2)})} \big)^{\frac{1}{2}} \\
&\lesssim \lambda^\varepsilon \lambda^{s(p,d)} \lambda^{- \frac{d}{p}} \big( \sum_{\Box: \lambda^{-1}-\text{cube}} \| g_\Box \|_{L^p}^2 \big)^{\frac{1}{2}} \lesssim \lambda^{\varepsilon + s(p,d)} \big( \sum_{k \in \Z^d} \| \Box_k f \|_{L^p}^2 \big)^\frac{1}{2}.
\end{split}
\end{equation*}
\end{proof}
In the special case $d=1$, $p=4$ we can remove the derivative loss entirely:
\begin{proposition}
\label{prop:StrichartzEstimateWithoutDerivativeLoss}
Let $\text{supp}(\hat{f}) \subseteq \{ \xi: \frac{\lambda}{4} \leq |\xi| \leq 4 \lambda \}$. Then,
we find the following estimate to hold
\begin{equation}
\label{eq:StrichartzEstimateWithoutDerivativeLoss}
\| U f \|_{L^4(\R \times I)} \lesssim \| f \|_{M_{4,2}}.
\end{equation}
\end{proposition}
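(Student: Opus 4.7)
The plan is to run the proof of Proposition~\ref{prop:SmoothingFromDecouplingFrequencyLocalized} essentially word-for-word, but with Theorem~\ref{thm:DecouplingParaboloid} replaced by the $\varepsilon$-free $\ell^2$-decoupling at $L^4$ for the parabola in $\R^2$ (sometimes called a reverse square function estimate), which is specific to $d=1$, $p=4$. Everything else in that argument --- the rescaling $g(x) = f(x/\lambda)$, the covering by $\lambda^2$-intervals, the fixed-time kernel estimate --- is exponent-agnostic and carries over verbatim.

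First I rescale: since $(d+2)/p = 3/4$ here, one has $\|Uf\|_{L^4(\R \times [0,1])} = \lambda^{-3/4}\|Ug\|_{L^4(\R \times [0,\lambda^2])}$ with $\hat g$ supported in $\{1/4 \leq |\xi| \leq 4\}$. I cover $\R$ by a finitely overlapping family of intervals of length $\lambda^2$ and, on each resulting translated region $B_2(0,R) \subset \R^2$ with $R = \lambda^2$, apply
\[
\|\mathcal{E}h\|_{L^4(B_2(0,R))} \lesssim \Bigl\| \Bigl( \sum_{\Box:\, R^{-1/2}\text{-int.}} |\mathcal{E}h_\Box|^2 \Bigr)^{1/2} \Bigr\|_{L^4(w_{B_2(0,R)})}.
\]
Summing over the balls in $\ell^4$, swapping $\ell^2$ and $L^4$ via Minkowski (legal since $4 \geq 2$), and applying the same fixed-time kernel estimate as in Proposition~\ref{prop:SmoothingFromDecouplingFrequencyLocalized} to obtain $\|\chi_{\lambda^2}(t) U g_\Box\|_{L^4(\R^2)} \lesssim \lambda^{1/2}\|g_\Box\|_{L^4}$, followed by inverting the rescaling (which converts $\bigl( \sum_\Box \|g_\Box\|_{L^4}^2 \bigr)^{1/2}$ into $\lambda^{1/4}\|f\|_{M_{4,2}}$), produces the combined factor $\lambda^{-3/4}\cdot \lambda^{1/2}\cdot \lambda^{1/4} = 1$, which is exactly \eqref{eq:StrichartzEstimateWithoutDerivativeLoss}.

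The one piece of genuinely new input is the $\varepsilon$-free decoupling inequality displayed above. The unweighted global version follows from the classical C\'ordoba--Fefferman orthogonality: by Plancherel,
\[
\|\mathcal{E}h\|_{L^4}^4 = \|(\mathcal{E}h)^2\|_{L^2}^2 \lesssim \sum_{\Box,\Box'} \|\mathcal{E}h_\Box\, \mathcal{E}h_{\Box'}\|_{L^2}^2 \lesssim \Bigl( \sum_\Box \|\mathcal{E}h_\Box\|_{L^4}^2 \Bigr)^2,
\]
where the first estimate uses that $(\xi_1,\xi_2) \mapsto (\xi_1+\xi_2, \xi_1^2+\xi_2^2)$ is injective on the parabola up to transposition --- so the space-time Fourier supports of the products $\mathcal{E}h_\Box\, \mathcal{E}h_{\Box'}$ are mutually almost-disjoint as $\{\Box,\Box'\}$ varies --- and the second uses $\|\mathcal{E}h_\Box\, \mathcal{E}h_{\Box'}\|_{L^2} \leq \|\mathcal{E}h_\Box\|_{L^4}\|\mathcal{E}h_{\Box'}\|_{L^4}$ with Cauchy--Schwarz. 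The main obstacle is arranging the weighted local version with constants uniform in $R = \lambda^2$; this is handled by the standard device of convolving with a Schwartz bump adapted to $B_2(0,R)$ that enlarges each $\Box$ negligibly while preserving the orthogonality. No higher-dimensional decoupling and no $\varepsilon$-loss appear anywhere in the argument.
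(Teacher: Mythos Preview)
Your proposal is correct and follows exactly the route the paper indicates: the paper states the square function estimate \eqref{eq:CordobaFeffermanSquareFunction} and says that substituting it for the decoupling step \eqref{eq:ell2DecoupingLocal} in the proof of Proposition~\ref{prop:SmoothingFromDecouplingFrequencyLocalized} yields the result. You carry this out in detail, including the Minkowski step to pass from the square-function form to the decoupled form and a sketch of the C\'ordoba--Fefferman orthogonality and localization that justify the $\varepsilon$-free input.
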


The proof crucially relies on the following square function estimate (cf. \cite[Chapter~IX,~§6]{Stein1993}):
\begin{equation}
\label{eq:CordobaFeffermanSquareFunction}
\| e^{it \Delta} f \|_{L_{t,x}^4([0,N^2] \times B(0,N^2))} \lesssim \big\| \big( \sum_\Box |e^{it \Delta} f_\Box|^2 \big)^{1/2} \big\|_{L^4_{t,x}(w_{B_{N^2}})}
\end{equation}
for $\text{supp} (\hat{f}) \subseteq B_1(0,2)$ with $\Box$ ranging over $N^{-1}$-intervals in Fourier space. Substituting \eqref{eq:CordobaFeffermanSquareFunction} for \eqref{eq:ell2DecoupingLocal} in the proof of Proposition \ref{prop:SmoothingFromDecouplingFrequencyLocalized} yields Proposition \ref{prop:StrichartzEstimateWithoutDerivativeLoss}.

\medskip

By Galilean invariance and a related kernel estimate we prove the following:
\begin{proposition}
\label{prop:ModulationMp1Estimate}
Let $2 \leq p \leq \infty$. Then, we find the following estimate to hold:
\begin{equation}
\label{eq:ModulationSpaceMp1Estimate}
\| U f \|_{L^p([-1,1] \times \R^d)} \lesssim \| f \|_{M_{p,1}}.
\end{equation}
\end{proposition}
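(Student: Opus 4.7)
The plan is to reduce the estimate to a fixed-time $L^p \to L^p$ bound for Fourier-localized pieces, then sum the $\Box_k$-pieces in $\ell^1$, which is exactly the $M_{p,1}$ summation. First I would write $f = \sum_{k \in \Z^d} \Box_k f$ and use Minkowski's inequality to reduce to the uniform estimate
\begin{equation*}
\| U \Box_k f \|_{L^p([-1,1] \times \R^d)} \lesssim \| \Box_k f \|_{L^p(\R^d)},
\end{equation*}
with a constant independent of $k \in \Z^d$. Summing in $k$ then gives the right-hand side $\|f\|_{M_{p,1}}$ directly from the definition.

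To prove the uniform fixed-cube estimate, I would use Galilean invariance, mirroring the kernel calculation in the proof of Proposition \ref{prop:SmoothingFromDecouplingFrequencyLocalized}. Write $g_k = \Box_k f$, whose Fourier support lies in the translated unit cube $Q_k$. The identity
\begin{equation*}
(U g_k)(x,t) = e^{i x \cdot k} e^{i t |k|^2} (U \tilde g_k)(x + 2tk, t),
\end{equation*}
where $\tilde g_k$ has Fourier support in the unit cube centered at the origin and $\|\tilde g_k\|_{L^p} = \|g_k\|_{L^p}$, shows that after the translation $y = x + 2tk$ (whose Jacobian is $1$ for each fixed $t$) we have
\begin{equation*}
\| U g_k \|_{L^p([-1,1] \times \R^d)} = \| U \tilde g_k \|_{L^p([-1,1] \times \R^d)}.
\end{equation*}
Hence the problem is reduced to a single fixed-cube estimate, independent of $k$.

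For that reduced estimate I would set $K_t(x) = \int e^{i(x \cdot \xi + t|\xi|^2)} a(\xi)\, d\xi$, where $a$ is a bump adapted to the unit cube at the origin, so that $U \tilde g_k(\cdot, t) = K_t \ast \tilde g_k$. A non-stationary phase argument controls $K_t$: the $\xi$-gradient of the phase is $x + 2t\xi$, which for $|t| \leq 1$ and $|\xi| \lesssim 1$ is comparable to $x$ once $|x| \gg 1$. Integration by parts therefore yields $|K_t(x)| \lesssim_N (1 + |x|)^{-N}$ uniformly in $|t| \leq 1$, giving $\|K_t\|_{L^1(\R^d)} \lesssim 1$. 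Young's convolution inequality then produces the fixed-time bound $\|U \tilde g_k(\cdot,t)\|_{L^p_x} \lesssim \|\tilde g_k\|_{L^p}$, and integration over the bounded interval $t \in [-1,1]$ promotes this to the required space-time $L^p$ estimate.

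The only real obstacle is the kernel estimate, and it is mild: because we are working on the compact time window $[-1,1]$ and on a single unit cube in frequency, the phase has bounded curvature contribution and the non-stationary phase bound is uniform. No decoupling or square function input is needed here — the full burden is carried by the $\ell^1$ summability built into $M_{p,1}$, which is precisely why $s=0$ suffices in case (D) of Theorem \ref{thm:SmoothingModulationSpaces}.
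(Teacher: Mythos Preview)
Your proposal is correct and follows essentially the same route as the paper: reduce via Minkowski to a single-cube estimate, use Galilean invariance to transfer the cube at $k$ to the cube at the origin, and then apply a uniform $L^1$ kernel bound (via non-stationary phase for $|t|\le 1$) together with Young's inequality. The paper's proof is slightly more compressed but the ingredients and order are identical.
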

\begin{proof}
By Minkowski's inequality, it suffices to show
\begin{equation}
\label{eq:UnitLocalizedEstimate}
\| U \Box_k f \|_{L^p([-1,1] \times \R)} \lesssim \| \Box_k f \|_{L^p(\R^d)}.
\end{equation}
By Galilean invariance, we observe with $\hat{g}(\xi) = \hat{f}(\xi+k)$
\begin{equation*}
\| U \Box_k f (t) \|_{L^p} = \| U \Box_0 g(t) \|_{L^p}.
\end{equation*}
Let $\chi \in C^\infty_c(\R^d)$. Clearly, $K(x,t) = \int_{\R^d} \chi(\xi) e^{i(x.\xi + t|\xi|^2)} d\xi$ is uniformly in $L^1(\R^d)$ for $|t| \leq 1$. Thus,
\begin{equation*}
\| U \Box_0 g \|_{L^p([-1,1] \times \R^d)} \lesssim \| \Box_0 g \|_{L^p(\R^d)} \lesssim \| \Box_k f \|_{L^p(\R^d)}.
\end{equation*}
\end{proof}

We can conclude the proof of Theorem \ref{thm:SmoothingModulationSpaces}:
\begin{proof}[Proof of Theorem \ref{thm:SmoothingModulationSpaces}]
(D) is Proposition \ref{prop:ModulationMp1Estimate}. Next, we show (A), (B), and (C) for $q=2$. By the square function estimate ($P_N$ denotes an inhomogeneous Littlewood-Paley decomposition), Minkowski's inequality, and \eqref{eq:StrichartzEstimateDec}, we find
\begin{equation}
\label{eq:q2EstimateFull}
\begin{split}
\| U f \|_{L^p([-1,1] \times \R^d)} &\lesssim \big\| \big( \sum_N |P_N U f|^2 \big)^{\frac{1}{2}} \big\|_{L^p([-1,1] \times \R^d)} \lesssim \big( \sum_N \| P_N U f \|^2_{L^p([-1,1] \times \R^d)} \big)^{\frac{1}{2}} \\
&\lesssim_\varepsilon \big( \sum_N N^{2(s(p,d)+\varepsilon)} \| P_N f \|^2_{M_{p,2}} \big)^{\frac{1}{2}} \lesssim \| f \|_{M^{s(p,d)+\varepsilon}_{p,2}}.
\end{split}
\end{equation}
For $1 \leq q \leq 2$, (A) follows from \eqref{eq:q2EstimateFull} and interpolating with (D) and for $q \geq 2$, we use the embedding \eqref{eq:RegularitySummability}. Likewise, (B) follows for $q \geq 2$ via \eqref{eq:RegularitySummability}. (C) follows from interpolating (B) for $q=2$ with (D). (E) follows from Proposition \ref{prop:StrichartzEstimateWithoutDerivativeLoss} and Stein's square function estimate as in \eqref{eq:q2EstimateFull}.

\end{proof}

\section{Solving the nonlinear Schr\"odinger equation with slowly decaying initial data}
\label{section:LWPNLS}

In the following we solve the nonlinear Schr\"odinger equation
\begin{equation}
\label{eq:CubicNLSAbstract}
\left\{ \begin{array}{cl}
i \partial_t u + \Delta u &= |u|^2 u, \quad (t,x) \in \R \times \R,\\
u(0) &= u_0 \in D
\end{array} \right.
\end{equation}
outside $L^2$-based Sobolev spaces.
\subsection{Local well-posedness of the cubic NLS for slowly decaying initial data}
In this subsection we prove new local well-posedness results. The local results do not take advantage of the defocusing effect, and the results in this section also hold for the focusing equation:
\begin{equation*}
\left\{ \begin{array}{cl}
i \partial_t u + \Delta u &= - |u|^2 u, \quad (t,x) \in \R \times \R, \\
u(0) &= u_0 \in D.
\end{array} \right.
\end{equation*}

 The smoothing estimates are the key ingredient to estimate the homogeneous solution. In the following we use the terminology due to Bejenaru--Tao \cite[Section~3]{BejenaruTao2006}. For further reference, we write \eqref{eq:CubicNLSAbstract} as abstract evolution equation
\begin{equation}
\label{eq:AbstractEvolutionEquation}
u = L(f) + N_3(u,u,u),
\end{equation}
where $u$ takes values in some solution space $S$, $L:D \to S$ is a densely defined linear operator, and the trilinear operator $N_3: S \times S \times S \to S$ is likewise densely defined. As in \cite[Section~3]{BejenaruTao2006}, we refer to \eqref{eq:AbstractEvolutionEquation} as quantitatively well-posed in the spaces $X$, $S$, if the estimates
\begin{align}
\label{eq:AbstractLinearEstimate}
\| L f \|_{S} &\leq C \| f \|_{D}, \\
\label{eq:NonlinearEstimate}
\| N_3(u_1,u_2,u_3) \|_{S} &\leq C \| u_1 \|_S \| u_2 \|_S \| u_3 \|_S
\end{align}
hold true for all $f \in D$, and $u_1,u_2,u_3 \in S$ and some constant $C$. This implies analytic well-posedness (cf. \cite[Theorem~3]{BejenaruTao2006}) and an expression of the solution in terms of its Picard iterates: We define the nonlinear maps $A_m:D \to S$ for $m=1,2,\ldots$ by the recursive formulae
\begin{align*}
A_1 f &= Lf, \\
A_m f &= \sum_{\substack{m_1,m_2,m_3 \geq 1, \\ m_1 + m_2 + m_3 = m }} N_3(A_{m_1} f, A_{m_2} f, A_{m_3} f) \text{ for } m > 1.
\end{align*}
Then we have the homogeneity property
\begin{equation*}
A_m(\lambda f) = \lambda^m A_m(f) \text{ for all } \lambda \in \R, \; m \geq 1 \text{ and } f \in D,
\end{equation*}
and the Lipschitz bound derived from \eqref{eq:AbstractLinearEstimate} and \eqref{eq:NonlinearEstimate}
\begin{equation*}
\| A_m (f) - A_m (g) \|_S \leq \| f - g \|_D C_1^m \big( \| f \|_D + \| g \|_D \big)^{m-1}.
\end{equation*}
Furthermore, we have the absolutely convergent (in $S$) power series expansion
\begin{equation*}
u[f] = \sum_{m=1}^\infty A_m(f)
\end{equation*}
for all $f \in B_D(0,\varepsilon_0)$. In case of \eqref{eq:CubicNLSAbstract}, we have $A_1 f = Lf = (U(t) f)_{t \in \R}$ and for $m > 1$, $A_m = 0$ if not $m = 2j+1$ for some $j \in \mathbb{N}$. $A_{2j+1}$ admits expansion into ternary trees of depth $j$ with $2j+1$ nodes.

To show the linear estimate in Theorem \ref{thm:LWPL4L6} for initial data in $L^p_s(\R)$, we use the following Schr\"odinger smoothing estimates due to Rogers in the special case of one dimension:
\begin{theorem}[{\cite[Theorem~1]{Rogers2008}}]
\label{thm:LpSmoothing}
Let $p \geq 4$. Then, we find the following estimate to hold
\begin{equation}
\label{eq:LpSmoothingEstimate}
\| e^{it \partial_{xx}} u_0 \|_{L^p([-1,1] \times \R)} \lesssim \| u_0 \|_{L_\alpha^p(\R)}
\end{equation}
provided that $\alpha > 2 \big( \frac{1}{2} - \frac{1}{p} \big) - \frac{2}{p}$.
\end{theorem}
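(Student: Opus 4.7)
My plan is to deduce Rogers's local smoothing by combining Theorem \ref{thm:SmoothingModulationSpaces} with the Rubio de Francia embedding \eqref{eq:EmbeddingModulationIII} in the upper range $p\geq 6$, and with an independent restriction-type input at the lower endpoint $p=4$.

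For $p \geq 6 = 2(d+2)/d$ (with $d=1$), the argument is immediate. The embedding $L^p(\R)\hookrightarrow M_{p,p}(\R)$ (for $p\geq 2$) upgrades to $L^p_\alpha(\R)\hookrightarrow M^\alpha_{p,p}(\R)$, and part~(B) of Theorem \ref{thm:SmoothingModulationSpaces} with $q=p$ then yields
\[
\|e^{it\partial_{xx}} u_0\|_{L^p([-1,1]\times\R)} \lesssim \|u_0\|_{M^s_{p,p}} \lesssim \|u_0\|_{L^p_\alpha}
\]
for every $\alpha > s > 1-\tfrac{3}{p}-\tfrac{1}{p} = 2\bigl(\tfrac12-\tfrac1p\bigr) - \tfrac2p$, which is precisely the Rogers threshold. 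For $4\leq p<6$ this chain loses too much regularity to reach the claim, so I would instead interpolate between the case $p=6$ just treated and the endpoint
\[
\|e^{it\partial_{xx}} u_0\|_{L^4([-1,1]\times\R)} \lesssim \|u_0\|_{L^4_\varepsilon(\R)}, \qquad \varepsilon>0,
\]
which I would obtain by a Littlewood-Paley decomposition, parabolic rescaling $\xi\mapsto N\xi$ and $(x,t)\mapsto (x/N,t/N^2)$, and the Fefferman $T^*T$ argument for the adjoint restriction $R^*(4\to 4)$ on the parabola, which exploits the non-vanishing curvature of the characteristic surface. Complex interpolation of the $p=4$ and $p=6$ endpoints then produces the condition $\alpha>1-\tfrac{4}{p}$ throughout $4\leq p\leq 6$, matching the case $p\geq 6$ at $p=6$.

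The main obstacle is the $p=4$ endpoint: neither the $\ell^2$-decoupling of Section \ref{section:DecouplingStrichartzEstimates} nor Strichartz combined with Bernstein reaches $\alpha=0$, since each effectively factors through an $L^2$-estimate and thereby pays an unavoidable Bernstein loss $N^{1/2-1/p}=N^{1/4}$ on each frequency band. Recovering the sharp exponent genuinely requires the classical one-dimensional parabolic restriction theory, whose curvature input is not accessible through the modulation-space/decoupling machinery used in the rest of the paper.
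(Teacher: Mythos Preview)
The paper does not re-prove this theorem; it cites Rogers directly and adds only one sentence: the endpoint $p=4$, absent from Rogers's statement, follows by interpolating the estimates for $p>4$ with the energy bound $\|e^{it\partial_{xx}}u_0\|_{L^2([-1,1]\times\R)}\lesssim\|u_0\|_{L^2}$. That interpolation (taking the auxiliary exponent $p_2\to 4^+$) lands on the threshold $\alpha>0$ at $p=4$. So there is essentially no original proof to compare against beyond this remark.

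Your route is different and correct. For $p\geq 6$ you recover Rogers from the paper's own Theorem~\ref{thm:SmoothingModulationSpaces}(B) together with the embedding $L^p_\alpha\hookrightarrow M^\alpha_{p,p}$; this is a worthwhile observation the paper does not make explicit. For $p=4$ you appeal to the Fefferman $R^*(4\to 4)$ argument, and you then interpolate the endpoints $p=4$ and $p=6$ to cover $4<p<6$. The paper instead runs the interpolation against $p=2$ and absorbs the restriction input inside the citation to Rogers (whose equivalence with $R^*(p\to p)$ is, in one dimension, resolved precisely by the parabola restriction theory you invoke). In either organization the sharp threshold on $4\leq p<6$ ultimately rests on one-dimensional restriction, not on decoupling; your diagnosis of why the modulation-space machinery alone stalls at $\alpha>1/4$ when $p=4$ is accurate. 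The paper's treatment is shorter only because it outsources this step to Rogers rather than redoing the $L^4$ restriction argument.
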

Note that the case $p=4$ is not mentioned in \cite[Theorem~1]{Rogers2008}, but follows by interpolating estimates for $p>4$ with the energy estimate
\begin{equation*}
\| e^{it \partial_{xx}} u_0 \|_{L^2([-1,1] \times \R)} \lesssim \| e^{it \partial_{xx}} u_0 \|_{L^\infty([-1,1],L^2(\R))} \lesssim \| u_0 \|_{L^2(\R)}.
\end{equation*}

The linear estimate for initial data in modulation spaces follows from Theorem \ref{thm:SmoothingModulationSpaces}. To show the trilinear estimate, we use inhomogeneous Strichartz estimates. Recall the following inhomogeneous Strichartz estimates for the one-dimensional Schr\"odinger equation (cf. \cite{KeelTao1998,GinibreVelo1979}):
\begin{theorem}
\label{thm:StrichartzSEQ}
Let $q_i$, $p_i \geq 2$ for $i=1,2$ and $\frac{2}{p_i} + \frac{1}{q_i} = \frac{1}{2}$. Then, we find the following estimate to hold:
\begin{equation*}
\| u \|_{L_t^{p_1}([0,T],L_x^{q_1}(\R))} \lesssim \| u(0) \|_{L^2(\R)} + \| (i \partial_t + \partial_x^2) u \|_{L_t^{p'_2}([0,T],L_x^{q_2'}(\R))}.
\end{equation*}
\end{theorem}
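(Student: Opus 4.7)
The plan is to establish Theorem \ref{thm:StrichartzSEQ} by the standard $TT^{\ast}$ argument combined with Hardy--Littlewood--Sobolev in time and the Christ--Kiselev lemma, following Keel--Tao and Ginibre--Velo.

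First I would decompose via Duhamel. Setting $F = (i\partial_t + \partial_x^2)u$, the formula
\[
u(t) = e^{it\partial_x^2} u(0) + i\int_0^t e^{i(t-s)\partial_x^2} F(s)\, ds
\]
reduces the theorem to two independent bounds: the homogeneous Strichartz estimate $\|e^{it\partial_x^2}f\|_{L^{p_1}_t L^{q_1}_x} \lesssim \|f\|_{L^2}$, and the retarded inhomogeneous estimate with $\|F\|_{L^{p_2'}_t L^{q_2'}_x}$ on the right-hand side.

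The only quantitative input is the one-dimensional dispersive bound $\|e^{i\tau\partial_x^2}\|_{L^1 \to L^{\infty}} \lesssim |\tau|^{-1/2}$, which, interpolated with the $L^2$-isometry property, yields $\|e^{i\tau\partial_x^2}\|_{L^{q'}\to L^q} \lesssim |\tau|^{-(1/2 - 1/q)}$ for $q \in [2,\infty]$. The admissibility relation $\frac{2}{p_i} + \frac{1}{q_i} = \frac{1}{2}$ is tuned precisely so that the singular kernel $|t-s|^{-(1/2 - 1/q_1)}$ acts as a fractional integration kernel mapping $L^{p_1'}_t \to L^{p_1}_t$ by Hardy--Littlewood--Sobolev on $\R$. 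For the homogeneous part, set $Tf = e^{it\partial_x^2}f$; then $TT^{\ast} G = \int_{\R} e^{i(t-s)\partial_x^2} G(s)\,ds$, and combining Minkowski in space, the dispersive decay, and HLS in time gives $\|TT^{\ast} G\|_{L^{p_1}_t L^{q_1}_x} \lesssim \|G\|_{L^{p_1'}_t L^{q_1'}_x}$, which is equivalent via $TT^{\ast}$ to the desired homogeneous bound. For the inhomogeneous part, running the same calculation on the bilinear pairing with two distinct admissible pairs yields the non-retarded estimate
\[
\Bigl\|\int_{\R} e^{i(t-s)\partial_x^2} F(s)\, ds\Bigr\|_{L^{p_1}_t L^{q_1}_x} \lesssim \|F\|_{L^{p_2'}_t L^{q_2'}_x}.
\]
The Christ--Kiselev lemma then upgrades this to the retarded operator $\int_0^t$, provided $p_1 > p_2'$.

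The main delicate point is the 1D endpoint $(p,q) = (4,\infty)$, where the homogeneous Strichartz estimate is known to fail (Montgomery-Smith) and where Christ--Kiselev breaks down as $p_1 = p_2'$; the range of the theorem is understood to exclude this corner, while the dual endpoint $(p,q) = (\infty,2)$ reduces to the $L^2$-conservation and H\"older in time. Away from the double endpoint, the argument above is uniform across the admissible diagram and delivers Theorem \ref{thm:StrichartzSEQ}.
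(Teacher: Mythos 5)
The paper does not prove this statement at all: Theorem \ref{thm:StrichartzSEQ} is recalled as a standard result with citations to Keel--Tao and Ginibre--Velo, so there is no in-paper argument to compare against. Your reconstruction is the standard proof and is essentially correct: Duhamel, the $1$D dispersive bound $|t|^{-1/2}$ interpolated to $L^{q'}\to L^q$ decay $|t|^{-(1/2-1/q)}$, $TT^{\ast}$ plus Hardy--Littlewood--Sobolev for the homogeneous and non-retarded inhomogeneous estimates, and Christ--Kiselev to pass to the retarded integral when the exponent pairs differ. One correction to your final paragraph, though: in one dimension the admissibility relation $\tfrac{2}{p}+\tfrac{1}{q}=\tfrac12$ with $q\geq 2$ forces $p\geq 4$, so $p_2'\leq 4/3<4\leq p_1$ and Christ--Kiselev always applies with room to spare; there is no case $p_1=p_2'$ to worry about. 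Moreover, the $1$D corner $(p,q)=(4,\infty)$ is \emph{not} a forbidden endpoint --- the kernel $|t-s|^{-1/2}$ maps $L^{4/3}_t\to L^4_t$ by HLS with strict inequalities throughout, so the homogeneous $L^4_tL^\infty_x$ estimate holds by the very argument you describe. The Montgomery-Smith counterexample concerns the wave equation (and the genuinely failing Schr\"odinger endpoint is $(2,\infty)$ in dimension two, where $p=2$ is admissible); neither situation arises here. So your proof in fact covers the full stated range without needing to exclude any corner.
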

We are ready for the proof of Theorem \ref{thm:LWPL4L6}:
\begin{proof}[Proof of Theorem \ref{thm:LWPL4L6}]
In the following we consider $0<T \leq 1$. The claim follows from \cite[Theorem~3]{BejenaruTao2006} once the linear and trilinear estimate are proved. For the linear estimate in $L^p$-spaces, it suffices to prove for $f \in L^4_s(\R)$ or $f \in L^6_s(\R)$
\begin{align}
\label{eq:L4LinearSmoothing} \| L f \|_{L_t^{\frac{24}{7}}([0,T],L^4(\R))} &\lesssim T^{\frac{1}{24}} \| f \|_{L^4_\varepsilon(\R)}, \\
\label{eq:L6LinearSmoothing} \text{ and } \|L f \|_{L_t^3([0,T],L^6(\R))} &\lesssim T^{\frac{1}{6}} \| f \|_{L^6_{\frac{1}{3}+s}(\R)}.
\end{align}
These estimates follow after H\"older in time from the $L^p$-smoothing estimate \eqref{eq:LpSmoothingEstimate}.

For the linear estimates in modulation spaces, we decompose $f = f_1 + f_2$, $f_1 \in M^s_{6,2}(\R)$ and $f_2 \in L^2(\R)$ in case $D=M^s_{6,2}(\R) + L^2(\R)$. It suffices to show
\begin{align}
\label{eq:L4LinearSmoothingModulation} \| L f \|_{L_t^{\frac{24}{7}}([0,T],L_x^4(\R))} &\lesssim T^{\frac{1}{24}} \| f \|_{M_{4,2}(\R)}, \\
\label{eq:L6LinearSmoothingModulation} \text{ and } \|L f \|_{L_t^3([0,T],L_x^6(\R))} &\lesssim T^{\frac{1}{6}} ( \| f_1 \|_{M^s_{6,2}(\R)} + \| f_2 \|_{L^2(\R)}).
\end{align}
Both estimates hold true by Theorem \ref{thm:SmoothingModulationSpaces} applied to $Lf$ in \eqref{eq:L4LinearSmoothingModulation} and $Lf_1$ in \eqref{eq:L6LinearSmoothingModulation} and Strichartz estimates applied to $Lf_2$. The trilinear estimate follows from the estimates
\begin{equation*}
\| \int_0^t e^{i(t-s) \partial_x^2} F(s) ds \|_{L_{t}^8([0,T], L^4_x(\R))} \lesssim \| F \|_{L_{t}^{\frac{8}{7}}([0,T], L_x^{\frac{4}{3}}(\R))}
\end{equation*}
and
\begin{equation*}
\| \int_0^t e^{i(t-s) \partial_x^2} F(s) ds \|_{L_{t}^6([0,T], L_x^6( \R))} \lesssim \| F \|_{L_t^1([0,1],L_x^2(\R))},
\end{equation*}
which are both covered by Theorem \ref{thm:StrichartzSEQ}, and applying H\"older's inequality. Hence, choosing $T=T(\|f \|_D)$, we can apply the contraction mapping principle in $L_t^r([0,T],L^p(\R))$ with $r$ as above.

In the following we focus on initial data $D = M^\varepsilon_{6,2}(\R) + L^2(\R)$ since $D = M_{4,2}(\R)$ is treated by easier means. To prove $u \in C([0,T],M^\varepsilon_{6,2} + L^2)$, it suffices to show $Lf \in C([0,T],M^\varepsilon_{6,2} + L^2)$ and $N_3(u,u,u) \in C([0,T],M^\varepsilon_{6,2} + L^2)$. Let $f = f_1 + f_2$ with $f_1 \in M^\varepsilon_{6,2}$ and $f_2 \in L^2$. By Minkowski's inequality, we find by $U(t) M^\varepsilon_{6,2} = M^\varepsilon_{6,2}$ and $U(t) L^2= L^2$ that
\begin{equation*}
\lim_{t \to 0} \| (Uf)(t) - f \|_{M^\varepsilon_{6,2} + L^2} \leq \limsup_{t \to 0} \| U f_1(t) - f_1 \|_{M^\varepsilon_{6,2}} + \limsup_{t \to 0} \| Uf_2(t) -f_2 \|_{L^2} = 0.
\end{equation*}
The continuity in $M^\varepsilon_{6,2}$ and $L^2$ is a consequence of $(U(t))_{t \in \R}$ a $C_0$-group in both spaces.

For $N_3(u,u,u)$, it suffices to show continuity in $L^2$. By Strichartz estimates, we find
\begin{equation}
\label{eq:N3L2}
\| N_3(u,u,u) \|_{L_t^\infty([0,T], L^2)} \lesssim \| u \|^3_{L_t^r([0,T],L^p)}
\end{equation}
and
\begin{equation*}
\begin{split}
&\quad \big \| \int_0^t e^{i(t-s) \Delta} (|u|^2 u)(s) ds - \int_0^{t + \delta} e^{i((t+\delta) - s)\Delta} (|u|^2 u)(s) ds \big \|_{L^2} \\
&\leq \big \| (e^{i \delta \Delta} - 1) \int_0^t e^{i(t-s) \Delta} (|u|^2 u)(s) ds \big\|_{L^2} + \big \| \int_t^{t+\delta} e^{i((t+\delta) - s) \Delta} (|u|^2 u)(s) ds \big \|_{L^\infty_{\delta \in I} L^2}.
\end{split}
\end{equation*}
For the first term, the limit is zero as $N_3(u,u,u)(t) \in L^2$ by \eqref{eq:N3L2} and $(U(t))_{t \in \R}$ a $C_0$-group in $L^2$. For the second term, we use again Strichartz estimates to find
\begin{equation*}
\big \| \int_{t}^{t+ \delta} e^{i((t+\delta)-s) \Delta} (|u|^2 u)(s) ds \big \|_{L^\infty_{\delta \in I} L^2} \lesssim \| u \|^3_{L_t^r(I,L^p)}.
\end{equation*}
By multilinearity, we see by similar arguments that for differences of solutions
\begin{equation*}
\| u - \tilde{u} \|_{C([0,T],M^\varepsilon_{6,2} + L^2)} \to 0
\end{equation*}
for $\| u(0) - \tilde{u}(0) \|_{M^\varepsilon_{6,2} + L^2} \to 0$ provided that $T = T(\| u(0) \|_{M^\varepsilon_{6,2} + L^2},\| \tilde{u}(0) \|_{M^\varepsilon_{6,2} + L^2})$ is chosen small enough, according to the local existence time in $L_t^r([0,T],L^p)$. The proof is complete.

\end{proof}
We remark that we have some flexibility in the solution space $S=L_t^p([0,T],L_x^p(\R))$. For small initial data, we can likewise iterate in $L_{t,x}^4([0,1] \times \R)$ or $L_{t,x}^6([0,1] \times \R)$. For $p>6$, although the sharp linear estimates are still at disposal, it is not clear how to apply inhomogeneous Strichartz estimates as directly as above. 
\subsection{Improved local results for slowly decaying data}

In this section we consider initial data in $L^p_s(\R)$, $6<p \leq \infty$, and improve the local result due to Dodson--Soffer--Spencer \cite{DodsonSofferSpencer2020} by passing through modulation spaces. In \cite{DodsonSofferSpencer2020} the authors used an iteration in $L^p$-spaces, which is costly in terms of derivatives because the propagator is not bounded in $L^p$-spaces. In \cite{DodsonSofferSpencer2020} local well-posedness is proved in $L^{4n+2}_{s(n)}$, $n \geq 2$, $s(n) = 2n+2$. Note that the number of required derivatives goes to infinity as $n \to \infty$, which is not the case for the improved result in Theorem \ref{thm:ImprovedLocalWellposednessLpSpaces}. For simplicity, we focus on the small data case with $T=1$. Additionally, we give a second proof of local well-posedness in $L^p$-based Sobolev spaces, which makes use of another solution space, but allows to lower the regularity further. 

\medskip

%We recall the sharp fixed-time estimate for the linear propagation due to Fefferman--Stein \cite{FeffermanStein1972} and Miyachi \cite{Miyachi1981} for convenience.

%\begin{theorem}[{\cite{FeffermanStein1972,Miyachi1981}}]
%\label{lem:FixedTimeEstimate}
%Let $1<p<\infty$, $d \geq 1$. Then, we find the following estimate to hold
%\begin{equation}
%\label{eq:LpFixedTimeEstimate}
%\| e^{it \Delta} u_0 \|_{L^p(\R^d)} \lesssim \| u_0 \|_{L_{\alpha}^p(\R^d)}
%\end{equation}
%for $\alpha \geq 2d \big| \frac{1}{2} - \frac{1}{p} \big|$.
%\end{theorem}

Let $n \geq 2$. The idea in \cite{DodsonSofferSpencer2020} to solve \eqref{eq:CubicNLSAbstract} with initial data in $L^{4n+2}_{s(n)}$ is to split the expansion
\begin{equation*}
u[f] = \sum_{m \geq 1} A_m(f) = L f + N_3(u,u,u)
\end{equation*}
not into linear and nonlinear part, but to consider higher Picard iterates
\begin{equation}
\label{eq:HigherIterates}
\begin{split}
u^0(t) &= L f, \\
u^1(t) &= N_3(u^0,u^0,u^0), \quad u^2(t) = N_3(u^0+u^1,u^0+u^1,u^0+u^1) - u^1(t), \ldots \\
u^j(t) &= N_3(\sum_{k=0}^{j-1} u^{k}, \sum_{k=0}^{j-1} u^k, \sum_{k=0}^{j-1} u^k) - \sum_{k=1}^{j-1} u^{k}(t). \quad (j \geq 2)
\end{split}
\end{equation}
and to prove existence of $v \in S^0([-1,1] \times \R) = L_t^\infty L_x^2 \cap L_t^4 L_x^\infty$, which solves
\begin{equation*}
v = u - \sum_{j=0}^{n-1} u^j.
\end{equation*}
This is equivalent to
\begin{equation}
\label{eq:DifferenceV}
v = N_3(u,u,u) - \sum_{j=1}^{n-1} u^j = N_3(v+\sum_{j=0}^{n-1} u^j, v+\sum_{j=0}^{n-1} u^j, v+\sum_{j=0}^{n-1} u^j) - \sum_{j=1}^{n-1} u^j.
\end{equation}
The key technical aspect is that $u^j$ contains only terms of the form $A_{k}$ with $k \geq 2j+1$. For $u^i$, $i=0,1$, this is clear. For $j \geq 2$, note the following identity
\begin{equation}
\label{eq:HigherPicardIteratesRelation}
\sum_{k=1}^{j-1} u^k = N_3 \big( \sum_{k=0}^{j-2} u^k, \sum_{k=0}^{j-2} u^k, \sum_{k=0}^{j-2} u^k \big).
\end{equation}
For $j=2$, \eqref{eq:HigherPicardIteratesRelation} is immediate from the definition. For $j \geq 3$, rewrite \eqref{eq:HigherPicardIteratesRelation} as
\begin{equation*}
u^{j-1} = N_3 \big( \sum_{k=0}^{j-2} u^k, \sum_{k=0}^{j-2} u^k, \sum_{k=0}^{j-2} u^k \big) - \sum_{k=1}^{j-2} u^k,
\end{equation*}
which corresponds to the definition of $u^{j-1}$ by \eqref{eq:HigherIterates}. Now, to show that $u^j$ contains only terms of the form $A_k$ with $k \geq 2j+1$ for $j \geq 2$, we use \eqref{eq:HigherPicardIteratesRelation} to rewrite
\begin{equation*}
\begin{split}
u^j(t) &= N_3 \big( \sum_{k=0}^{j-1} u^k, \sum_{k=0}^{j-1} u^k, \sum_{k=0}^{j-1} u^k \big) - \sum_{k=0}^{j-1} u^k \\
&= N_3 \big( \sum_{k=0}^{j-1} u^k, \sum_{k=0}^{j-1} u^k, \sum_{k=0}^{j-1} u^k \big) - N_3 \big( \sum_{k=0}^{j-2} u^k, \sum_{k=0}^{j-2} u^k, \sum_{k=0}^{j-2} u^k \big) \\
&= N_3(u^{j-1},u^{j-1},u^{j-1}) + 3 N_3 \big( u^{j-1}, \sum_{k=0}^{j-2} u^k, \sum_{k=0}^{j-2} u^k \big) + 3 N_3\big( u^{j-1}, u^{j-1}, \sum_{k=0}^{j-2} u^k \big).
\end{split}
\end{equation*}

This approach requires to prove estimates for $A_m (f)$ directly. We use the following lemma, which is a consequence of H\"older's and Young's inequality:
\begin{lemma}[{\cite[Theorem~4.3]{Chaichenets2018}}]
\label{lem:HoelderModulationSpaces}
Let $s \geq 0$, $\frac{1}{p} = \frac{1}{p_1} + \frac{1}{p_2}$. Then, we find the following estimate to hold:
\begin{equation*}
\| f_1 f_2 \|_{M^s_{p,1}} \lesssim \| f_1 \|_{M^s_{p_1,1}} \| f_2 \|_{M^s_{p_2,1}}.
\end{equation*}
\end{lemma}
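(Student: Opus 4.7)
The plan is to exploit the near-multiplicative behavior of the frequency-uniform decomposition under pointwise multiplication. First I would write $f_1 f_2 = \sum_{k_1,k_2 \in \Z^d} (\Box_{k_1} f_1)(\Box_{k_2} f_2)$ and observe that, since $\widehat{\Box_{k_i} f_i}$ is supported in the translated unit cube $Q_{k_i}$, the pointwise product $(\Box_{k_1} f_1)(\Box_{k_2} f_2)$ has Fourier transform supported in $Q_{k_1}+Q_{k_2}$, a cube of side length at most $2$ centered near $k_1+k_2$. Consequently $\Box_k\big((\Box_{k_1} f_1)(\Box_{k_2} f_2)\big)$ vanishes unless $k-k_1-k_2$ lies in a fixed finite set $A\subset\Z^d$ of cardinality depending only on the dimension.

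Next, I would combine two elementary ingredients: $\Box_k$ is bounded on every $L^p$ with operator norm independent of $k$ (its symbol is a translate of a single $C^\infty_c$ function), and H\"older's inequality with $\frac{1}{p}=\frac{1}{p_1}+\frac{1}{p_2}$ yields $\|(\Box_{k_1}f_1)(\Box_{k_2}f_2)\|_{L^p}\leq \|\Box_{k_1}f_1\|_{L^{p_1}}\|\Box_{k_2}f_2\|_{L^{p_2}}$. Applying the support restriction from the previous step gives the pointwise bound
\[
\|\Box_k(f_1 f_2)\|_{L^p}\lesssim\sum_{k-k_1-k_2\in A}\|\Box_{k_1}f_1\|_{L^{p_1}}\|\Box_{k_2}f_2\|_{L^{p_2}}.
\]

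Finally, to carry out the weighted $\ell^1$ summation I would invoke the Peetre-type inequality $\langle k\rangle^s\lesssim\langle k_1\rangle^s+\langle k_2\rangle^s$, valid for $s\geq 0$ whenever $k-k_1-k_2$ is confined to the bounded set $A$. Splitting accordingly, $\sum_k \langle k\rangle^s\|\Box_k(f_1 f_2)\|_{L^p}$ is bounded by two convolutions of nonnegative $\ell^1$-sequences against the characteristic function of $A$, with the weight $\langle\cdot\rangle^s$ shifted onto one of the two factors in each term. Young's inequality $\ell^1\ast\ell^1\hookrightarrow\ell^1$ (the precise place where the hypothesis $q=1$ enters) together with the trivial embedding $M^s_{p_i,1}\hookrightarrow M_{p_i,1}$ for $s\geq 0$ then delivers the claimed estimate. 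The only mild subtlety is splitting the weight inequality into the subadditive case $0\leq s\leq 1$ and the superadditive case $s>1$ handled by the binomial estimate; apart from this book-keeping, the argument is a straightforward packaging of H\"older and Young around the key frequency-support observation.
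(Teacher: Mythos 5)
Your argument is correct and is precisely the one the paper points to: the lemma is quoted from \cite[Theorem~4.3]{Chaichenets2018} with the remark that it is ``a consequence of H\"older's and Young's inequality,'' and your write-up (frequency support of $(\Box_{k_1}f_1)(\Box_{k_2}f_2)$ near $k_1+k_2$, H\"older in $L^p$, distributing the weight $\langle k\rangle^s$, then $\ell^1\ast\ell^1\hookrightarrow\ell^1$) is exactly that standard proof. The only cosmetic point is that the case split at $s=1$ is unnecessary, since $\langle k\rangle^s\lesssim_s \max(\langle k_1\rangle,\langle k_2\rangle)^s\leq\langle k_1\rangle^s+\langle k_2\rangle^s$ holds uniformly for all $s\geq 0$.
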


 We have the following:
\begin{lemma}
\label{lem:RegularityPicardIterates}
Let $n \geq 2$ and $m \in \{1,\ldots, 2n-1 \}$. With the above notations, we find
\begin{equation}
\label{eq:PicardIterateLebesgue}
\| A_m f \|_{L_t^\infty L^{\frac{4n+2}{m}}} \lesssim \| f \|^m_{L^{4n+2}_s}
\end{equation}
for $s > 1 - \frac{1}{4n+2}$.
%Furthermore, the estimate
%\begin{equation}
%\label{eq:PicardIterateModulation}
%\| A_m f \|_{L_t^\infty L^{\frac{4n+2}{m}}} \lesssim \| f \|^m_{M^\beta_{4n+2,2}}
%\end{equation}
%holds true with $\beta > (n-1) \big( 2- \frac{6}{4n+2} \big) + \frac{2n-2}{4n+2}$.
\end{lemma}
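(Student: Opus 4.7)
The plan is to route the $L^{4n+2}_s \to L^{(4n+2)/m}$ estimate through the modulation space $M_{4n+2, 1}(\R)$, where the Schr\"odinger propagator is uniformly bounded on bounded time intervals by \eqref{eq:FixedTimeEstimateModulationSpaces} and where the trilinear product law of Lemma \ref{lem:HoelderModulationSpaces} is available.

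The first step is to establish the embedding $L^{4n+2}_s(\R) \hookrightarrow M_{4n+2, 1}(\R)$ under the precise hypothesis $s > 1 - \tfrac{1}{4n+2}$. Setting $g = \langle D \rangle^s f$, a standard kernel estimate for $\Box_k \langle D \rangle^{-s}$ gives the symbolic equivalence $\|f\|_{M^\sigma_{p, q}} \sim \|\langle D \rangle^\sigma f\|_{M_{p, q}}$, which lifts \eqref{eq:EmbeddingModulationIII} to $L^{4n+2}_s \hookrightarrow M^s_{4n+2, 4n+2}$; the regularity--summability trade \eqref{eq:RegularitySummability} with $d = 1$, $q_1 = 4n+2$, $q_2 = 1$, $s_1 = s$, $s_2 = 0$ then embeds $M^s_{4n+2, 4n+2} \hookrightarrow M_{4n+2, 1}$ exactly under $s > 1 - \tfrac{1}{4n+2}$.

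The core of the argument is an induction on $m \in \{1, \ldots, 2n-1\}$ proving
\[
\sup_{t \in [0,1]} \|A_m f(t)\|_{M_{(4n+2)/m, 1}(\R)} \lesssim \|f\|^m_{M_{4n+2, 1}(\R)}.
\]
The base case $m = 1$ is $A_1 f = U(t) f$, where \eqref{eq:FixedTimeEstimateModulationSpaces} with $|t| \leq 1$ yields the uniform bound. For the induction step, I decompose $A_m f = \sum_{m_1 + m_2 + m_3 = m} N_3(A_{m_1} f, A_{m_2} f, A_{m_3} f)$ with $N_3(u_1, u_2, u_3)(t) = -i \int_0^t U(t-s)(u_1 \bar u_2 u_3)(s) \, ds$. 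Setting $p = (4n+2)/m$ and $p_i = (4n+2)/m_i$ so that $\tfrac{1}{p} = \sum_i \tfrac{1}{p_i}$, Minkowski's inequality in time together with \eqref{eq:FixedTimeEstimateModulationSpaces} reduces matters to a pointwise-in-$s$ bound on $\|u_1 \bar u_2 u_3\|_{M_{p, 1}}$. Iterating the bilinear Lemma \ref{lem:HoelderModulationSpaces}, combined with the invariance $\|\bar u\|_{M_{p, 1}} = \|u\|_{M_{p, 1}}$ (from $\Box_k \bar u = \overline{\Box_{-k} u}$, assuming the profile $\sigma_0$ is even), gives $\|u_1 \bar u_2 u_3\|_{M_{p, 1}} \lesssim \prod_i \|u_i\|_{M_{p_i, 1}}$. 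Since $m_1 + m_2 + m_3 = m$ with $m_i \geq 1$ forces $m_i \leq m - 2 \leq 2n - 3$, every $m_i$ lies strictly inside the induction range and the hypothesis yields $\|f\|^m_{M_{4n+2, 1}}$.

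The last step closes the chain in $L^p$ via \eqref{eq:EmbeddingModulationI} and \eqref{eq:EmbeddingModulationIII}: $M_{(4n+2)/m, 1} \hookrightarrow M_{(4n+2)/m, ((4n+2)/m)'} \hookrightarrow L^{(4n+2)/m}$, which is valid because $(4n+2)/m \geq 2$ for $m \leq 2n+1$, a fortiori under $m \leq 2n-1$. I do not anticipate a genuine analytic obstacle; the entire argument is a clean interplay of the propagator bound, H\"older in modulation spaces, and the two embeddings. The only care needed is combinatorial, namely verifying that at each step of the recursion the H\"older indices $p_i$ satisfy $\tfrac{1}{p} = \sum_i \tfrac{1}{p_i}$ and that every sub-index $m_i$ is strictly smaller than $m$, which is automatic from the constraint $m_1 + m_2 + m_3 = m$ with $m_i \geq 1$.
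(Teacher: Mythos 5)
Your proposal is correct and follows essentially the same route as the paper: embed $L^{4n+2}_s \hookrightarrow M^s_{4n+2,4n+2} \hookrightarrow M_{4n+2,1}$ under $s > 1 - \tfrac{1}{4n+2}$, iterate the modulation-space H\"older inequality of Lemma \ref{lem:HoelderModulationSpaces} together with the uniform boundedness of the propagator to get $\| A_m f \|_{L_t^\infty M_{(4n+2)/m,1}} \lesssim \| f \|^m_{M_{4n+2,1}}$, and conclude via $M_{p,1} \hookrightarrow M_{p,p'} \hookrightarrow L^p$. You merely spell out the induction on $m$ (and the harmless conjugation issue) that the paper compresses into the phrase ``by iterating Lemma \ref{lem:HoelderModulationSpaces}.''
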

\begin{proof}
Let $1 \leq m \leq 2n-1$. We use the embedding $M_{p,1} \hookrightarrow M_{p,p'} \hookrightarrow L^p$ for $2 \leq p \leq \infty$ to argue
\begin{equation*}
\| A_m f \|_{L_t^\infty L^{\frac{4n+2}{m}}} \lesssim \| A_m f \|_{L_t^\infty M_{\frac{4n+2}{m},1}}.
\end{equation*}
%Note the following consequence of iterating Young's inequality:
%\begin{equation*}
%\| f_1 * \ldots * f_m \|_{L^{\tilde{p}}} \lesssim \prod_{i=1}^m \| f_i \|_{L^q}
%\end{equation*}
%for $m-1 + \frac{1}{\tilde{p}} = \frac{m}{q}$. Hence, for $\tilde{p} = \big( \frac{4n+2}{2n-1} \big)'$ and $m = 2n-1$, we find $\frac{1}{q} = \frac{8n^2 - 2n -3}{8n^2 - 2}$.
% and $\frac{1}{q'} = \frac{2n+1}{8n^2 - 2}$.
 By iterating Lemma \ref{lem:HoelderModulationSpaces}, we obtain
\begin{equation*}
\| A_m f \|_{L_t^\infty M_{\frac{4n+2}{m},1}} \lesssim \| f \|^m_{M_{4n+2,1}} \lesssim_s \| f \|^m_{M^s_{4n+2,4n+2}} \lesssim \| f \|_{L^s_{4n+2}}^m
\end{equation*}
for $s > 1 - \frac{1}{4n+2}$. 
\end{proof}

We find by a similar argument
\begin{equation*}
\begin{split}
\| A_m f \|_{L_t^\infty L_x^\infty} &\lesssim \| A_m f \|_{L_t^\infty M_{\infty,1}} \lesssim  \| f \|_{M_{\infty,1}}^m \\
&\lesssim \| f \|^m_{M_{4n+2,1}} \lesssim \| f \|^m_{L^s_{4n+2}} \text{ for } s > 1 - \frac{1}{4n+2}
\end{split}
\end{equation*}
and remark that the iteration yields
\begin{equation*}
\| A_m f \|_{L_t^\infty L_x^\infty} \lesssim \| f \|^m_{M^{\tilde{s}}_{4n+2,2}} \text{ for } \tilde{s} > \frac{1}{2}.
\end{equation*}
However, $\tilde{s}> \frac{1}{2}$, which regularity suffices for local well-posedness by the algebra property. Hence, the argument is not helpful to improve the local well-posedness theory in modulation spaces, but will be useful in the next section to prove global results.

In the following, for fixed $n \geq 2$, let
\begin{align}
\label{eq:DerivativeLossLp}
\alpha &= 1 - \frac{1}{4n+2}, \\
\label{eq:DerivativeLossMp}
\tilde{\alpha} &= \frac{1}{2}.
\end{align}

A variant of the argument proves the following:
\begin{lemma}
\label{lem:RegularityHigherIterates}
Let $n \geq 2$, $0 \leq j \leq n-1$, and $u^j$ as in \eqref{eq:HigherIterates}. Then, for $\varepsilon >0$, there is $\varepsilon_n \leq 1$ and $\tilde{\varepsilon}_n \leq 1$ such that
\begin{equation*}
\| u^j \|_{L_{t,x}^\infty} + \| u^j \|_{L_t^\infty L^{\frac{4n+2}{2j+1}}} \lesssim \| \langle \partial_x \rangle^{\alpha + \varepsilon} f \|_{L^{4n+2}}
\end{equation*}
holds true provided that $\| \langle \partial_x \rangle^{\alpha + \varepsilon} f \|_{L^{4n+2}} \leq \varepsilon_n$, and
\begin{equation*}
\| u^j \|_{L_{t,x}^\infty} + \| u^j \|_{L_t^\infty L^{\frac{4n+2}{2j+1}}} \lesssim \| \langle \partial_x \rangle^{\tilde{\alpha} + \varepsilon} f \|_{M_{4n+2,2}}
\end{equation*}
provided that $\| \langle \partial_x \rangle^{\tilde{\alpha} + \varepsilon} f \|_{M_{4n+2,2}} \leq \tilde{\varepsilon}_n$.
%, and
%\begin{equation*}
%\| u^j \|_{L_{t,x}^\infty} + \| u^j \|_{L_t^\infty L^{\frac{4n+2}{2j+1}}} \lesssim \|  f \|_{M^{\beta+\frac{1}{2}+\varepsilon}_{4n+2,2}}
%\end{equation*}
%holds true provided that $\| f \|_{M^{\beta+\frac{1}{2}+\varepsilon}_{4n+2,2}} \leq \tilde{\varepsilon}_n$.
\end{lemma}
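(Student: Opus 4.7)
The plan is to decompose each $u^j$ as a finite linear combination of multilinear Picard iterates $A_m f$ with $m \geq 2j+1$, and then to estimate each piece by the same modulation-space method used in the proof of Lemma \ref{lem:RegularityPicardIterates}.

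First, I would establish by induction on $j$ the structural decomposition
\[
u^j = \sum_{m \in S_j} c^{(j)}_m A_m f,
\]
where $S_j$ is a finite set of odd integers bounded below by $2j+1$ and above by $3^j$. The base cases $u^0 = A_1 f$ and $u^1 = A_3 f$ are immediate. The inductive step exploits \eqref{eq:HigherPicardIteratesRelation} exactly as in the discussion preceding Lemma \ref{lem:RegularityPicardIterates}: the difference $N_3\bigl(\sum_{k=0}^{j-1} u^k, \sum_{k=0}^{j-1} u^k, \sum_{k=0}^{j-1} u^k\bigr) - N_3\bigl(\sum_{k=0}^{j-2} u^k, \sum_{k=0}^{j-2} u^k, \sum_{k=0}^{j-2} u^k\bigr)$ leaves only trilinear products containing at least one factor of $u^{j-1}$, so the multilinearity counts add up to at least $(2(j-1)+1) + 1 + 1 = 2j+1$.

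Second, for each $m \in S_j$ I would repeat the argument of Lemma \ref{lem:RegularityPicardIterates} and the remark following it to obtain
\[
\|A_m f\|_{L^\infty_t L^p_x} \lesssim \|A_m f\|_{L^\infty_t M_{p,1}} \lesssim \|f\|_{M_{4n+2,1}}^m
\]
for any $p \in [2,\infty]$, via $m$-fold application of Lemma \ref{lem:HoelderModulationSpaces} and the embedding $M_{p,1} \hookrightarrow L^p$ from \eqref{eq:EmbeddingModulationI} and \eqref{eq:EmbeddingModulationIII}. The chain $L^{4n+2}_{\alpha+\varepsilon} \hookrightarrow M^{\alpha+\varepsilon}_{4n+2,4n+2} \hookrightarrow M_{4n+2,1}$ is valid by \eqref{eq:EmbeddingModulationIII} and \eqref{eq:RegularitySummability} provided $\alpha + \varepsilon > 1 - 1/(4n+2)$, which is precisely the definition \eqref{eq:DerivativeLossLp}; the analogous chain $M^{\tilde\alpha+\varepsilon}_{4n+2,2} \hookrightarrow M_{4n+2,1}$ holds by \eqref{eq:RegularitySummability} as soon as $\tilde\alpha + \varepsilon > 1/2$, matching \eqref{eq:DerivativeLossMp}.

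Third, since $m \geq 2j+1$ gives $(4n+2)/m \leq (4n+2)/(2j+1) \leq \infty$, log-convexity of the $L^p$ norms interpolates the $L^{(4n+2)/m}$ and $L^\infty$ bounds into
\[
\|A_m f\|_{L^{(4n+2)/(2j+1)}_x} \lesssim \|\langle \partial_x \rangle^{\alpha+\varepsilon} f\|_{L^{4n+2}}^m.
\]
Summing the finite collection $m \in S_j$ and choosing $\varepsilon_n \leq 1$ small enough that $\sum_{m \in S_j} \varepsilon_n^{m-2j-1}$ is bounded by a constant absorbs every higher power into a single factor of $\|\langle \partial_x \rangle^{\alpha+\varepsilon} f\|_{L^{4n+2}}^{2j+1} \leq \varepsilon_n^{2j} \|\langle \partial_x \rangle^{\alpha+\varepsilon} f\|_{L^{4n+2}}$, producing the claimed linear bound. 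The $L^\infty_{t,x}$ estimate is obtained by taking $p = \infty$ throughout, and the modulation-space estimate is proved identically with $\tilde\alpha$ in place of $\alpha$.

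The main obstacle is the bookkeeping in step one: verifying that the telescoping in \eqref{eq:HigherIterates}, combined with \eqref{eq:HigherPicardIteratesRelation}, precisely removes every $A_m$ contribution with $m < 2j+1$, uniformly in $j$. Once this structural fact is secured, the remaining work reduces to the modulation-space Hölder and embedding estimates already used in Lemma \ref{lem:RegularityPicardIterates}, and the interpolation between $L^{(4n+2)/m}$ and $L^\infty$ is classical.
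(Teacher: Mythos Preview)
Your proposal is correct and follows the same approach as the paper: decompose $u^j$ into Picard iterates $A_m$ with $m \geq 2j+1$ (this structural fact is already established in the paragraph preceding Lemma \ref{lem:RegularityPicardIterates}), then bound each $A_m$ via iterated modulation-space H\"older as in Lemma \ref{lem:RegularityPicardIterates} together with the embeddings $L^{4n+2}_{\alpha+\varepsilon} \hookrightarrow M_{4n+2,1}$ and $M^{\tilde\alpha+\varepsilon}_{4n+2,2} \hookrightarrow M_{4n+2,1}$. The only cosmetic difference is that you reach the target exponent $(4n+2)/(2j+1)$ by interpolating between $L^{(4n+2)/m}$ and $L^\infty$, whereas the paper's hint (``additional terms estimated with $u^j$ also in $M_{\infty,1}$'') suggests placing $m-(2j+1)$ of the factors directly in $M_{\infty,1}$ when applying Lemma \ref{lem:HoelderModulationSpaces}; both routes are equivalent.
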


The iteration is the same as in the proof of Lemma \ref{lem:RegularityPicardIterates}, with additional terms estimated with $u^j$ also in $M_{\infty,1}$. We can prove existence of $v$ with the above estimates at hand:
\begin{proposition}
\label{prop:ExistenceV}
Let $\varepsilon>0$, $n \geq 2$, and $\varepsilon_n, \tilde{\varepsilon}_n \leq 1$ as in Lemma \ref{lem:RegularityHigherIterates}. Then, there is a unique $v \in S^0$ satisfying \eqref{eq:DifferenceV}.
\end{proposition}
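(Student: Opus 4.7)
The plan is to solve \eqref{eq:DifferenceV} by a Banach fixed-point argument in a small closed ball of $S^0 = L_t^\infty([-1,1];L_x^2) \cap L_t^4([-1,1];L_x^\infty)$; both pairs $(\infty,2)$ and $(4,\infty)$ are Schr\"odinger-admissible in one dimension, so Theorem~\ref{thm:StrichartzSEQ} controls the Duhamel operator $F \mapsto \int_0^t e^{i(t-s)\partial_x^2} F(s)\, ds$ from $L_t^{4/3}L_x^1$ into $S^0$. Abbreviating $W := \sum_{j=0}^{n-1} u^j$, I would first expand
$$N_3(v+W,v+W,v+W) = N_3(v,v,v) + 3N_3(v,v,W) + 3N_3(v,W,W) + N_3(W,W,W),$$
and observe, from the defining recursion \eqref{eq:HigherIterates} applied at level $j=n$, that the constant-in-$v$ piece of \eqref{eq:DifferenceV} collapses to the next Picard iterate
$$u^n := N_3(W,W,W) - \sum_{j=1}^{n-1} u^j,$$
which, by iterated expansion, is a finite linear combination of terms $A_m(f)$ with $m \geq 2n+1$.

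Next I would bound $\|u^n\|_{L_t^\infty L_x^2}$ by extending the H\"older chain behind Lemma~\ref{lem:RegularityPicardIterates}: at $m = 2n+1$ the endpoint exponent $(4n+2)/(2n+1) = 2$ is reached, and $M_{2,1} \hookrightarrow L^2$ still holds by \eqref{eq:EmbeddingModulationI} and \eqref{eq:EmbeddingModulationIII}; for $m > 2n+1$ one distributes $2n+1$ copies of $f$ in $L^{4n+2}$ and the remaining $m-2n-1$ copies in $L^\infty$, invoking the $M_{\infty,1} \hookrightarrow L^\infty$ bound already used after Lemma~\ref{lem:RegularityPicardIterates}. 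In both regimes $\|u^n\|_{L_t^\infty L_x^2}$ is controlled by a power of $\|\langle \partial_x\rangle^{\alpha+\varepsilon} f\|_{L^{4n+2}}$ (respectively of $\|\langle \partial_x\rangle^{\tilde\alpha+\varepsilon} f\|_{M_{4n+2,2}}$) of order at least $2n+1$, hence small by hypothesis. Setting $\Phi(v) := N_3(v,v,v) + 3N_3(v,v,W) + 3N_3(v,W,W) + u^n$, the cube is classical, $\|v^2 v\|_{L_t^{4/3}L_x^1} \leq \|v\|_{L_t^4 L_x^\infty}^2 \|v\|_{L_t^\infty L_x^2} \leq \|v\|_{S^0}^3$, while for the mixed terms I would split $W = \sum_j u^j$ and treat each summand separately, using $u^j \in L_{t,x}^\infty \cap L_t^\infty L_x^{(4n+2)/(2j+1)}$ from Lemma~\ref{lem:RegularityHigherIterates}: placing one $u$-factor in $L_{t,x}^\infty$ lets H\"older in $x$ and $t$ close the remaining factors into $L_t^{4/3} L_x^1$ with a small power of $\|f\|$ out front. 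Running the same estimates on $\Phi(v_1) - \Phi(v_2)$ yields a Lipschitz constant of order $\|v\|_{S^0}^2 + \|v\|_{S^0}\,O(\varepsilon_n) + O(\varepsilon_n^2)$, strictly less than one on a small enough ball of $S^0$ once $\varepsilon_n, \tilde\varepsilon_n$ are chosen accordingly, and the contraction principle finishes the proof.

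The main obstacle will be the bookkeeping for these mixed estimates: because the iterates $u^j$ live in \emph{different} Lebesgue spaces (from $L^{4n+2}$ down to $L^{(4n+2)/(2n-1)}$), none of which is $L^2$, the sum $W$ itself does not lie in $L_x^2$, so one cannot simply pair it with $v$ across a single H\"older step. Each mixed contribution $N_3(v^a, u^{j_1}, \ldots, u^{j_b})$ must therefore be handled individually, with the $L_{t,x}^\infty$ bound on all-but-one factor freeing up the Lebesgue exponents so that the remaining ones close into the dual Strichartz pair $(4/3,1)$, leaving a factor of $O(\varepsilon_n^b)$ available to be absorbed into the contraction constant.
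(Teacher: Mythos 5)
Your overall scheme -- a contraction in a small ball of $S^0$ after the trinomial expansion in $v$ and $W=\sum_{j=0}^{n-1}u^j$, with the $v$-dependent terms closed by Theorem \ref{thm:StrichartzSEQ} and the $L^\infty_{t,x}$ bounds of Lemma \ref{lem:RegularityHigherIterates} -- is exactly the paper's proof. Where you genuinely diverge is the $v$-independent term: the paper uses the telescoping identity \eqref{eq:HigherPicardIteratesRelation} to write it as $N_3(\sum_{j=0}^{n-1}u^j,\cdot,\cdot)-N_3(\sum_{j=0}^{n-2}u^j,\cdot,\cdot)$, so that by multilinearity every summand contains a factor $u^{n-1}\in L_t^\infty L^{\frac{4n+2}{2n-1}}$, and a single application of H\"older ($\frac{2n-1}{4n+2}+\frac{2}{4n+2}=\frac12$) puts the product in $L_t^1L_x^2$, whence one inhomogeneous Strichartz estimate gives the full $S^0$ bound of size $\varepsilon^{2n+1}$. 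You instead expand $u^n$ into its tree terms of degree $\geq 2n+1$ and bound each via the modulation-space algebra of Lemma \ref{lem:HoelderModulationSpaces}, reaching $M_{2,1}\hookrightarrow L^2$ at degree $2n+1$ and dumping excess factors into $L^\infty$. That is a legitimate alternative (it is the mechanism of Lemma \ref{lem:RegularityPicardIterates} pushed one step further), but note it is slightly heavier, and as written it only controls $\|u^n\|_{L_t^\infty L_x^2}$: since $u^n$ enters $\Phi$ as an inhomogeneous constant, you also need its $L_t^4L_x^\infty$ component, which you should extract either from the parallel $M_{\infty,1}$ chain or, more cheaply, by running the Duhamel integral through Strichartz as the paper does.

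The one concrete repair your estimates need is the choice of dual Strichartz pair. Committing to $L_t^{4/3}L_x^1$ does not close the term $N_3(v,W,W)$: after placing one $u^{j}$-factor in $L^\infty_{t,x}$ you are left with $v\cdot u^{j'}$, and neither $v\in L_x^2$ paired with $u^{j'}\in L_x^{(4n+2)/(2j'+1)}\cap L_x^\infty$ (in particular $u^0=Lf$, which need not lie in any $L_x^p$ with $p<4n+2$) lands in $L_x^1$. The natural choice here is the dual pair $(1,2)$: $\|vW^2\|_{L_t^1L_x^2}\leq\|v\|_{L_t^1L_x^2}\|W\|^2_{L^\infty_{t,x}}$, which is exactly how the paper closes these terms. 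The same remark applies to your H\"older splitting of the cube, where $L_x^\infty\cdot L_x^\infty\cdot L_x^2$ lands in $L_x^2$, not $L_x^1$. With those adjustments the contraction closes as you describe.
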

\begin{proof}
We rewrite \eqref{eq:DifferenceV} modulo order of the arguments in $N_3$ as
\begin{equation*}
\begin{split}
v &= N_3(v,v,v) + 3 N_3(v,v,\sum_{j=0}^{n-1} u^j) + 3 N_3(v,\sum_{j=0}^{n-1} u^j, \sum_{j=0}^{n-1} u^j) \\
 &\qquad + N_3( \sum_{j=0}^{n-1} u^j, \sum_{j=0}^{n-1} u^j, \sum_{j=0}^{n-1} u^j)- \sum_{j=1}^{n-1} u^j.
 \end{split}
\end{equation*}
By Theorem \ref{thm:StrichartzSEQ} and $u^j \in L^\infty_{t,x}$ by Lemma \ref{lem:RegularityHigherIterates}, we find
\begin{align*}
\| N_3(v,v,v) \|_{S^0} \lesssim \| v \|^3_{S^0}, \; \| N_3(v,v,\sum_{j=0}^{n-1} u^j) \|_{S^0} \lesssim \varepsilon \| v \|^2_{S^0}, \; \| N_3(v,\sum_{j=0}^{n-1} u^j,\sum_{j=0}^{n-1} u^j) \|_{S^0} \lesssim \varepsilon^2 \| v \|_{S^0}. 
\end{align*}
We rewrite the last term by \eqref{eq:HigherPicardIteratesRelation} as
\begin{equation*}
N_3(\sum_{j=0}^{n-1} u^j,\sum_{j=0}^{n-1} u^j,\sum_{j=0}^{n-1} u^j) - N_3(\sum_{j=0}^{n-2} u^j, \sum_{j=0}^{n-2} u^j,\sum_{j=0}^{n-2} u^j)
\end{equation*}
and estimate again via Strichartz estimates
\begin{equation*}
\begin{split}
&\quad \| N_3(\sum_{j=0}^{n-1} u^j,\sum_{j=0}^{n-1} u^j,\sum_{j=0}^{n-1} u^j) - N_3(\sum_{j=0}^{n-2} u^j, \sum_{j=0}^{n-2} u^j,\sum_{j=0}^{n-2} u^j) \|_{S^0} \\
&\lesssim \| u^{n-1} \|_{L_t^\infty L^{\frac{4n+2}{2n-1}}} \big( \sum_{j=0}^{n-1} \| u^j \|_{L_t^\infty L_x^{4n+2}} \big)^2 \lesssim \varepsilon^{2n+1}.
\end{split}
\end{equation*}
The claim follows from applying the contraction mapping principle.
\end{proof}
We have proved the following local well-posedness result for slowly decaying data:
\begin{theorem}
\label{thm:SlowlyDecayingData}
Let $\varepsilon > 0$, $n \geq 2$, and $f$, $\varepsilon_n$, and $\tilde{\varepsilon}_n$ as in Proposition \ref{prop:ExistenceV}. Let $\frac{2}{p} + \frac{1}{4n+2} =\frac{1}{2}$. Then, there is $u \in L_t^p([0,1],L^{4n+2}(\R))$, which satisfies \eqref{eq:AbstractEvolutionEquation}. Furthermore, for $\| f_1 \|_{L^{4n+2}_{\alpha + \varepsilon}} + \| f_2 \|_{L^{4n+2}_{\alpha +  \varepsilon}} \leq \varepsilon_n$ or $\| f_1 \|_{M^{\tilde{\alpha} + \varepsilon}_{4n+2,2}} + \| f_2 \|_{M^{\tilde{\alpha} + \varepsilon}_{4n+2,2}} \leq \tilde{\varepsilon}_n$, we have for the corresponding solutions $\| u_1 - u_2 \|_{L^p([0,1],L^{4n+2})} \to 0$ as $\| f_1 - f_2 \|_{L^{4n+2}_{\alpha + \varepsilon}} \to 0$, or $\| f_1 - f_2 \|_{M^{\tilde{\alpha}+\varepsilon}_{4n+2,2}} \to 0$, respectively.

%If $f \in M^{\beta + \frac{1}{2} + \varepsilon}_{4n+2,2}$, then $u \in C([0,1],M^{\beta + \frac{1}{2} + \varepsilon}_{4n+2,2} + L^2)$.
\end{theorem}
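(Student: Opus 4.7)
The plan is to assemble $u$ directly from the pieces already constructed. Let $v \in S^0 = L_t^\infty L_x^2 \cap L_t^4 L_x^\infty$ denote the fixed point produced by Proposition \ref{prop:ExistenceV}, and let $u^0, u^1, \ldots, u^{n-1}$ be the higher Picard iterates defined in \eqref{eq:HigherIterates}. Set
\[
u = v + \sum_{j=0}^{n-1} u^j.
\]
That $u$ satisfies the abstract identity \eqref{eq:AbstractEvolutionEquation} then follows by a single rearrangement of \eqref{eq:DifferenceV}: since $u^0 = Lf$ and $v = N_3(u,u,u) - \sum_{j=1}^{n-1} u^j$, adding the two identities yields $u = Lf + N_3(u,u,u)$, as required. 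This is exactly the motivation for the design of the higher iterates in \eqref{eq:HigherIterates}, so no further algebra is needed beyond this rearrangement.

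Next I would verify that $u \in L_t^p([0,1], L^{4n+2}(\R))$. The hypothesis $\tfrac{2}{p} + \tfrac{1}{4n+2} = \tfrac{1}{2}$ is precisely the one-dimensional Schr\"odinger admissibility condition, so interpolating between the two defining components of $S^0$ places $v$ in $L_t^p([0,1], L^{4n+2}_x)$. For $j \geq 1$, Lemma \ref{lem:RegularityHigherIterates} supplies both $u^j \in L_{t,x}^\infty$ and $u^j \in L_t^\infty L_x^{(4n+2)/(2j+1)}$; log-convexity of the $L^r(\R)$ norm between these two exponents yields $u^j \in L_t^\infty([0,1], L^{4n+2}(\R))$, which embeds into $L_t^p([0,1], L^{4n+2}(\R))$ on the bounded time interval. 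The homogeneous term $u^0 = Lf$ lies in the same space directly by Theorem \ref{thm:LpSmoothing} (when $f \in L^{4n+2}_{\alpha+\varepsilon}$) or Theorem \ref{thm:SmoothingModulationSpaces} (when $f \in M^{\tilde{\alpha}+\varepsilon}_{4n+2,2}$), together with an inhomogeneous Strichartz estimate at the admissible endpoint.

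For the continuous dependence claim, note that each $u^j$ is a multilinear polynomial of degree $2j+1$ in $f$ built from $L$ and $N_3$, and that the fixed point $v$ in Proposition \ref{prop:ExistenceV} depends on its data-parameterized inhomogeneity via a uniform contraction. Applying the same estimates underlying Lemma \ref{lem:RegularityHigherIterates} and Proposition \ref{prop:ExistenceV} to differences $f_1 - f_2$, and telescoping the multilinear expressions via the same identity pattern as \eqref{eq:HigherPicardIteratesRelation}, yields a Lipschitz bound
\[
\| u_1 - u_2 \|_{L^p([0,1], L^{4n+2})} \lesssim \| f_1 - f_2 \|_{D}
\]
on the small-data ball in $D \in \{ L^{4n+2}_{\alpha+\varepsilon}, M^{\tilde{\alpha}+\varepsilon}_{4n+2,2} \}$. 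I expect the main obstacle to be precisely this bookkeeping for differences: verifying that the telescoping cancellations designed into the $u^j$'s survive under the substitution $u^j(f_1) - u^j(f_2)$ without producing terms of spurious multilinearity order or regularity. Once this is confirmed, one-dimensional Strichartz and the modulation-space algebra bound from Lemma \ref{lem:HoelderModulationSpaces} close the argument in exactly the same form as the existence estimates.
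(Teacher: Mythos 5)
Your proposal is correct and follows essentially the same route as the paper: the paper likewise sets $u = v + \sum_{j=0}^{n-1}u^j$, notes $v\in L_t^p([0,1],L^{4n+2})$ from Proposition \ref{prop:ExistenceV} and $u^j\in L_t^\infty([0,1],L^{4n+2})$ from Lemma \ref{lem:RegularityHigherIterates}, and attributes continuous dependence to multilinearity. Your interpolation details (between the two components of $S^0$ for $v$, and between $L^{(4n+2)/(2j+1)}$ and $L^\infty$ for $u^j$) simply make explicit what the paper leaves implicit.
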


\begin{proof}
The claim follows as $v \in L_t^p([0,1],L^{4n+2})$ by Proposition \ref{prop:ExistenceV} and $u^j \in L^\infty_t([0,1],L^{4n+2})$. Hence,
\begin{equation*}
u = \sum_{j=0}^{n-1} u^j + v = Lf + N_3(u,u,u) \in L_t^p([0,1],L^{4n+2}),
\end{equation*}
and the continuity of the data-to-solution mapping follows from multilinearity.
\end{proof}

In the following we consider again initial data in $L^p_s(\R)$, $6<p \leq \infty$, but choose another solution space. We prove the following, which also covers $p= \infty$ with a finite derivative loss:
\begin{theorem}
\label{thm:ImprovedLocalWellposednessLpSpaces}
Let $6 < p \leq \infty$ and $s > 1 - \frac{1}{p}$. Then, we find \eqref{eq:CubicNLSAbstract} to be quantitatively well-posed with $D= L^p_s$ and $S = L_t^\infty([0,1],M_{\infty,1})$.
\end{theorem}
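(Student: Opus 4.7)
The plan is to apply the Bejenaru--Tao framework recalled above: it suffices to verify the linear bound \eqref{eq:AbstractLinearEstimate} and the trilinear bound \eqref{eq:NonlinearEstimate} with $D = L^p_s(\R)$ and $S = L_t^\infty([0,1], M_{\infty,1}(\R))$. The driving observation is that $M_{\infty,1}(\R)$ is invariant under the Schr\"odinger propagator (uniformly on $[-1,1]$) and is a Banach algebra under pointwise multiplication, so both estimates reduce to an embedding.

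For the linear estimate, by \eqref{eq:FixedTimeEstimateModulationSpaces} applied in $M_{\infty,1}$ at $|t| \leq 1$, it is enough to establish $L^p_s(\R) \hookrightarrow M_{\infty,1}(\R)$ for $s > 1 - 1/p$ and $2 \leq p \leq \infty$. I would factor this as
\begin{equation*}
L^p_s(\R) \hookrightarrow M^s_{p,p}(\R) \hookrightarrow M^s_{\infty,p}(\R) \hookrightarrow M_{\infty,1}(\R),
\end{equation*}
where the first embedding follows from \eqref{eq:EmbeddingModulationIII} applied to $\langle D \rangle^s f$ together with $\|f\|_{M^s_{p,p}} \sim \|\langle D \rangle^s f\|_{M_{p,p}}$; the second is Bernstein on each unit Fourier cube (giving \eqref{eq:EmbeddingModulationII}); and the third is the regularity--summability trade-off \eqref{eq:RegularitySummability}, which in dimension one saturates precisely at $s > 1 - 1/p$.

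For the trilinear estimate, starting from Duhamel,
\begin{equation*}
N_3(u_1, u_2, u_3)(t) = -i \int_0^t U(t-s)\,(u_1 \bar u_2 u_3)(s)\, ds,
\end{equation*}
Minkowski's inequality and the $M_{\infty,1}$-invariance of $U(t-s)$ let me bring the norm inside the integral. Iterating Lemma \ref{lem:HoelderModulationSpaces} at $p_1 = p_2 = \infty$ and $s = 0$ (the algebra property of $M_{\infty,1}$) controls the triple product in $s$ by $\prod_i \|u_i(s)\|_{M_{\infty,1}}$, and integrating over $s \in [0,t] \subseteq [0,1]$ yields
\begin{equation*}
\|N_3(u_1,u_2,u_3)\|_{L_t^\infty([0,1], M_{\infty,1})} \lesssim \prod_{i=1}^3 \|u_i\|_{L_t^\infty([0,1], M_{\infty,1})}.
\end{equation*}
Invoking \cite[Theorem~3]{BejenaruTao2006} then gives quantitative well-posedness.

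The main subtle point is the embedding chain for the linear estimate: the threshold $s > 1 - 1/p$ is saturated exactly at \eqref{eq:RegularitySummability}, so the intermediate embeddings must be essentially loss-free. Rubio de Francia's inequality is what makes $L^p \hookrightarrow M_{p,p}$ tight, and the Bernstein step costs nothing thanks to the unit-scale Fourier support of each $\Box_k$. This is the improvement mechanism over the $L^p$-iteration in \cite{DodsonSofferSpencer2020}, where the unboundedness of $U(t)$ on $L^p$ forced additional derivative losses.
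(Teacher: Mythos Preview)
Your proof is correct and follows essentially the same approach as the paper: verify the linear and trilinear bounds in the Bejenaru--Tao framework using boundedness of $U(t)$ on $M_{\infty,1}$, the embedding $L^p_s \hookrightarrow M_{\infty,1}$ for $s > 1 - 1/p$, and the algebra property of $M_{\infty,1}$ from Lemma~\ref{lem:HoelderModulationSpaces}. The only cosmetic difference is the order of the intermediate embeddings --- the paper runs $L^p_s \hookrightarrow M^s_{p,p} \hookrightarrow M_{p,1} \hookrightarrow M_{\infty,1}$, applying \eqref{eq:RegularitySummability} before \eqref{eq:EmbeddingModulationII}, whereas you swap these two steps --- but both chains lose exactly the same regularity.
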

\begin{proof}
The linear estimate \eqref{eq:AbstractLinearEstimate} follows from $\big( e^{it \Delta} \big)_{t \in \R}$ being a $C_0$-group on $M_{\infty,1}$ and the embeddings \eqref{eq:EmbeddingModulationII} and \eqref{eq:EmbeddingModulationIII}:
\begin{equation*}
\| e^{it \Delta} f \|_{L_t^\infty([0,1],M_{\infty,1})} \lesssim \| f \|_{M_{\infty,1}} \lesssim \| f \|_{M_{p,1}} \lesssim_s \| f \|_{M_{p,p}^s(\R)} \lesssim \| f \|_{L^p_s(\R)}.
\end{equation*}
The nonlinear estimate follows from Minkowski's inequality and Lemma \ref{lem:HoelderModulationSpaces}:
\begin{equation*}
\| N_3(u_1,u_2,u_3) \|_{S} = \| \int_0^t e^{i(t-s)\Delta} (u_1 \overline{u}_2 u_3)(s) ds \|_{L_t^\infty M_{\infty,1}} \lesssim \| u_1 \overline{u}_2 u_3 \|_{L_t^1 M_{\infty,1}} \lesssim \prod_{i=1}^3 \| u_i \|_{L_t^\infty M_{\infty,1}} = \prod_{i=1}^3 \| u_i \|_S.
\end{equation*}
Hence, the claim follows from \cite[Theorem~3]{BejenaruTao2006}.
\end{proof}

\subsection{Global well-posedness for slowly decaying initial data}

Next, we show global results in modulation spaces stated in Theorem \ref{thm:GlobalWellposedness}. We use the blow-up alternative due to Dodson--Soffer--Spencer \cite[Section~3]{DodsonSofferSpencer2020}. Since we work with initial data in modulation spaces, for which we have improved homogeneous Strichartz estimates, this requires less Sobolev regularity compared to the $L^p$-case.

Let $w(t) = U(t) f$ denote the first Picard iterate, and let $v(t) = u(t)-w(t)$. Then $v$ satisfies
\begin{equation*}
v(t) = N_3(v+w,v+w,v+w).
\end{equation*}
We have the following blow-up alternative:
\begin{lemma}[Blow-up alternative]
\label{lem:BlowUp}
Let $s>0$, and $f \in M_{4,2}(\R)$. If $T^*$ is maximal such that $u \in L_t^{\frac{24}{7}}([0,T],L^4(\R))$ for $T<T^*$, but $u \notin L_t^{\frac{24}{7}}([0,T^*],L^4(\R))$, then
\begin{equation*}
\lim_{t \to T^*} \| v(t) \|_{L^2} = \infty.
\end{equation*}
The same blow-up alternative holds for $f \in M^s_{6,2}(\R) + L^2(\R)$ in the solution space $L_t^3([0,T],L^6(\R))$.
\end{lemma}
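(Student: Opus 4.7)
The plan is to argue by contradiction: assume $\liminf_{t \uparrow T^*} \|v(t)\|_{L^2} < \infty$ and extract a sequence $t_n \uparrow T^*$ with $\|v(t_n)\|_{L^2} \leq M$ for some $M < \infty$. The goal is to run a local existence argument for $v$ starting from time $t_n$ on an interval $[t_n, t_n + \tau_0]$ whose length $\tau_0 > 0$ depends only on $M$ (and not on $n$). For $n$ sufficiently large that $t_n + \tau_0 > T^*$, the resulting $v$ extends $u = v + w$ strictly past $T^*$ in $L_t^{24/7} L_x^4$, contradicting maximality of $T^*$.

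To implement this, I would treat the Duhamel identity
\begin{equation*}
v(t) = U(t - t_n) v(t_n) - i \int_{t_n}^{t} U(t-s) \bigl( |v+w|^2(v+w) \bigr)(s) \, ds
\end{equation*}
as a fixed-point equation in $X_\tau = L_t^{24/7}([t_n, t_n+\tau], L_x^4(\R)) \cap L_t^\infty([t_n, t_n+\tau], L_x^2(\R))$. Combining Theorem \ref{thm:StrichartzSEQ} for the Schr\"odinger-admissible pair $(8,4)$ and its dual $(8/7, 4/3)$ with H\"older in time (which trades $L_t^8$ for $L_t^{24/7}$ at the cost of $\tau^{1/6}$), together with the trilinear H\"older estimate
\begin{equation*}
\bigl\| |v+w|^2(v+w) \bigr\|_{L_t^{8/7} L_x^{4/3}} \lesssim \bigl( \|v\|_{L_t^{24/7} L_x^4} + \|w\|_{L_t^{24/7} L_x^4} \bigr)^3,
\end{equation*}
yields the master inequality $\|v\|_{X_\tau} \lesssim M + \tau^{1/6} \bigl( \|v\|_{L_t^{24/7} L_x^4} + \|w\|_{L_t^{24/7}([t_n, t_n+\tau]) L_x^4} \bigr)^3$, with an analogous Lipschitz bound on differences. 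This contracts on a ball of radius $\sim M$ in $X_\tau$ as soon as both $\tau$ and $\|w\|_{L_t^{24/7}([t_n, t_n+\tau]) L_x^4}$ are sufficiently small in terms of $M$.

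The final ingredient is that $w = U(\cdot) f \in L_t^{24/7}([0, T^* + 1], L_x^4)$: Theorem \ref{thm:SmoothingModulationSpaces}(E) applied on unit time intervals, combined with the fixed-time estimate \eqref{eq:FixedTimeEstimateModulationSpaces} to control $\|U(k) f\|_{M_{4,2}}$ at integer $k$, gives finiteness of this norm. Absolute continuity of the integral then forces $\|w\|_{L_t^{24/7}([t_n, T^* + 1]) L_x^4} \to 0$ as $n \to \infty$. I would therefore fix $\tau_0 = \tau_0(M) > 0$ small enough for the contraction to run (with the $w$-threshold replaced by a fixed small constant), then choose $n$ so large that both $t_n + \tau_0 > T^*$ and the required $w$-smallness holds on $[t_n, t_n + \tau_0]$. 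The resulting $v$ on $[t_n, t_n + \tau_0]$, pieced together with $v$ on $[0, t_n]$ and $w$ on $[0, T^* + 1]$, produces $u = v + w \in L_t^{24/7}([0, t_n + \tau_0], L_x^4)$, contradicting maximality.

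The case $f \in M^s_{6,2}(\R) + L^2(\R)$ is handled identically after splitting $f = f_1 + f_2$ with $f_1 \in M^s_{6,2}$ and $f_2 \in L^2$: control $U f_1 \in L_t^3([0, T^* + 1], L_x^6)$ via Theorem \ref{thm:SmoothingModulationSpaces}(B) (in $d = 1$, $p = 6$, $q = 2$, where $s > 0$ suffices) and $U f_2 \in L_t^3 L_x^6 \cap L_t^\infty L_x^2$ via the standard $L^2$-Strichartz estimate, and iterate in $L_t^3 L_x^6 \cap L_t^\infty L_x^2$ using the admissible pair $(\infty, 2)$ in the Duhamel bound. The main technical hurdle in either case is the H\"older bookkeeping of $|v+w|^2(v+w)$ into its terms $v^a w^b$ with $a+b = 3$ and the verification that the subcritical factor $\tau^{1/6}$ (coming from the gap between the Strichartz-admissible exponents and the solution space) is enough to close the contraction; this reflects the mass-subcritical character of the cubic NLS in $d = 1$.
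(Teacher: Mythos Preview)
Your overall strategy is correct and is essentially the standard blow-up alternative by contradiction, but it differs from the paper's proof and contains a minor logical slip.

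\textbf{Comparison with the paper.} The paper does not redo a fixed-point argument for $v$. Instead it observes that $u(t_n) = w(t_n) + v(t_n)$, bounds $\|w(t_n)\|_{M^s_{p,2}}$ uniformly for $t_n \leq T^*$ via the fixed-time estimate \eqref{eq:FixedTimeEstimateModulationSpaces}, and combines this with $\|v(t_n)\|_{L^2} \leq C$ (and $L^2 \hookrightarrow M_{4,2}$ in the first case) to conclude that $\|u(t_n)\|_{D}$ stays bounded along the sequence. It then invokes Theorem~\ref{thm:LWPL4L6} directly at time $t_n$: since the local existence time there depends only on $\|u(t_n)\|_D$, one immediately extends $u$ past $T^*$. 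This is shorter because it black-boxes the local theory you are essentially reproving for the $v$-equation. Your route, working in Strichartz norms and treating $w$ as a known forcing, is perfectly legitimate and is the more ``PDE-textbook'' version of the same idea; it just duplicates effort.

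\textbf{The slip.} Your absolute-continuity claim, that $\|w\|_{L_t^{24/7}([t_n,\,T^*+1])L_x^4} \to 0$ as $n \to \infty$, is false as stated: the interval $[t_n, T^*+1]$ shrinks only to $[T^*, T^*+1]$, which has length $1$, so there is no reason for that norm to vanish. Fortunately you do not need smallness of $w$ at all. What you actually established is $W := \|w\|_{L_t^{24/7}([0,\,T^*+1])L_x^4} < \infty$. The contraction estimate you wrote, $\|v\|_{X_\tau} \lesssim M + \tau^{1/6}\bigl(\|v\|_{X_\tau} + W\bigr)^3$, already closes on a ball of radius $\sim M$ once $\tau_0 = \tau_0(M,W)$ is chosen small enough; no smallness of $W$ is required. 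With this correction the rest of your argument goes through unchanged. The same remark applies verbatim in the $M^s_{6,2}+L^2$ case.
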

\begin{proof}
We shall only look into the case $f \in M^s_{6,2}(\R) + L^2(\R)$ as the second claim follows by an easy variation. We argue by contradiction. Let $(t_n) \subseteq [0,T^*)$, $t_n \to T^*$, but
\begin{equation}
\label{eq:BoundedInhomogeneousTerm}
\| v(t_n) \|_{L^2(\R)} \leq C.
\end{equation}

 Firstly, letting $f= f_1+ f_2$, $f_1 \in M^s_{6,2}(\R)$ and $f_2 \in L^2(\R)$, we have
\begin{equation*}
\| L f_1(t) \|_{M^s_{6,2}(\R)} \lesssim_{T^*} \| f_1 \|_{M^s_{6,2}(\R)}, \quad \| L f_2(t) \|_{L^2(\R)} \leq \| f_2 \|_{L^2(\R)}
\end{equation*}
Hence, $\| Lf(t) \|_{M^s_{6,2}(\R) + L^2(\R)} \lesssim_{T^*} \| f \|_{M^s_{6,2}(\R) + L^2(\R)}$. But together with \eqref{eq:BoundedInhomogeneousTerm}, this implies that there is a sequence $t_n \to T^*$ with
\begin{equation*}
\| u(t_n) \|_{M^s_{6,2}(\R) + L^2(\R)} \leq \tilde{C}.
\end{equation*}
Since the local existence time of solutions in $L_t^{3}([0,T],L^6_x(\R))$ only depends on $\| f \|_{M^s_{6,2}(\R) + L^2(\R)}$ by Theorem \ref{thm:LWPL4L6}, we see that we can continue the solutions beyond $T^*$. This is a contradiction.
\end{proof}

Hence, for the proof of global well-posedness it suffices to show
\begin{equation*}
\sup_{t \in [0,T]} \| v(t) \|_{L^2} \leq C(T)
\end{equation*}
for some non-decreasing function $C:[0,\infty) \to [0,\infty)$. Let
\begin{align*}
M(v) &= \frac{1}{2} \int |v|^2 dx, \\
E(v) &= \int \frac{1}{2} |v_x|^2 + \frac{1}{4} |v|^4 dx, \\
\tilde{E}(v) &= \int \frac{1}{2} |v_x|^2 + \frac{1}{4} (|v+w|^4 - |w|^4) dx.
\end{align*}
Note that a priori it is not clear that $E(v(t))$ is finite for $t \neq 0$. The following computations are carried out for initial data from a suitable a priori class, say $f \in \mathcal{S}(\R)$. This ensures all quantities to be finite and allows to justify integration by parts arguments. Since we prove bounds depending only on $\| u_0 \|_{M^s_{p,q}}$ for $p,q < \infty$, the arguments are a posteriori justified by density and well-posedness.

For the homogeneous solutions, we observe by the fixed time estimate in modulation spaces and their embedding properties,
\begin{align*}
\| w(t) \|_{M^s_{4,2}} \lesssim \langle t \rangle^{\frac{1}{8}} \| w(0) \|_{M^s_{4,2}}, \quad \| w(t) \|_{L^4} \lesssim \| w(t) \|_{M_{4,4/3}} \lesssim \| w(t) \|_{M^{\frac{1}{4} + \varepsilon}_{4,2}}.
\end{align*}
By Sobolev embedding, we have for any $t \in [0,T]$
\begin{equation}
\label{eq:M4Embeddings}
\| \langle \partial_x \rangle w(t) \|_{L^4} + \| \langle \partial_x \rangle w(t) \|_{L^\infty} \lesssim_T \| w(0) \|_{M^{\frac{3}{2}+\varepsilon}_{4,2}}.
\end{equation}
For $L^6$-based modulation spaces, we find by the same embedding properties
\begin{equation}
\label{eq:M6Embeddings}
\| \langle \partial_x \rangle w(t) \|_{L^\infty} + \| \langle \partial_x \rangle w(t) \|_{L^6} \lesssim_T \| w(0) \|_{M^{\frac{3}{2}+\varepsilon}_{6,2}}.
\end{equation}
Hence, H\"older's inequality and the assumptions on the initial data imply
\begin{equation*}
E \leq C(T) (\tilde{E} + M + 1).
\end{equation*}
By Lemma \ref{lem:BlowUp}, solutions in $L^p([0,T],L^q)$ with $p$, $q$ as in Theorem \ref{thm:GlobalWellposedness} for any $T>0$ follow from the following:
\begin{proposition}
\label{prop:GlobalExistence}
Let $\varepsilon > 0$, $f \in M^{\frac{3}{2}+\varepsilon}_{4,2}$ or $f \in M^{\frac{3}{2}+\varepsilon}_{6,2}$. For all $T>0$, we have
\begin{equation*}
\sup_{t \in [0,T]} M(v(t)) + E(v(t)) \lesssim_T 1.
\end{equation*}
\end{proposition}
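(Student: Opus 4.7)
The strategy is to derive the coupled differential inequality
\[
\frac{d}{dt}\bigl(M(v) + \tilde{E}(v)\bigr) \lesssim_T 1 + M(v) + \tilde{E}(v),
\]
starting from $M(v(0)) = \tilde{E}(v(0)) = 0$, and invoke Gronwall. Since the final bound will only involve $\|f\|_{M^{3/2+\varepsilon}_{p,2}}$ for $p \in \{4,6\}$, I first approximate $f$ by Schwartz data, for which all quantities are finite and integration by parts is justified; the bound for general $f$ then follows by density and the continuous dependence from Theorem \ref{thm:LWPL4L6}. Throughout, $v$ satisfies $i v_t + v_{xx} = |v+w|^2(v+w)$ with $v(0) = 0$.

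For the mass identity, differentiating $M(v) = \tfrac{1}{2}\|v\|_{L^2}^2$ cancels the $|v|^2 v$ contribution by the usual $L^2$-unitary argument, and only terms carrying at least one factor of $w$ survive, bounded by $\int\bigl(|v|^3|w| + |v|^2|w|^2 + |v||w|^3\bigr)\,dx$. Using the fixed-time bounds \eqref{eq:M4Embeddings} (resp.\ \eqref{eq:M6Embeddings}) for $\|w\|_{L^\infty}$ and $\|w\|_{L^q}$ with $q \in \{4,6\}$, together with H\"older and interpolation, yields $|\tfrac{d}{dt}M(v)| \lesssim_T 1 + M(v) + E(v)$.

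The heart of the proof is the energy estimate, which I would obtain through the identity
\[
\tilde{E}(v) = E(u) - E(w) - \text{Re}\!\int v_x\overline{w}_x\,dx, \qquad u = v+w.
\]
Here $E(u)$ is conserved since $u$ solves the cubic NLS; $\|w_x\|_{L^2}^2$ is formally conserved along the linear flow, so only $\tfrac{1}{4}\tfrac{d}{dt}\|w\|_{L^4}^4$ and the cross term need attention. The former is controlled pointwise by $\int |w|^2|w_x|^2\,dx \lesssim \|w\|_{L^4}^2\|w_x\|_{L^4}^2 \lesssim_T 1$. For the cross term, inserting $v_t = iv_{xx} - i|u|^2u$ and $w_t = iw_{xx}$, the two third-order contributions $i\!\int v_{xxx}\overline{w}_x\,dx$ and $-i\!\int v_x\overline{w}_{xxx}\,dx$ agree after integration by parts and cancel, leaving
\[
\text{Re}\,\frac{d}{dt}\!\int v_x\overline{w}_x\,dx = \text{Im}\!\int (|u|^2u)_x\,\overline{w}_x\,dx.
\]
Bounding $|(|u|^2 u)_x| \lesssim |u|^2|u_x|$ pointwise, writing $u = v+w$ and $u_x = v_x + w_x$, and distributing via H\"older with $\|w\|_{L^\infty}$, $\|w_x\|_{L^\infty}$, $\|w\|_{L^4}$, $\|w_x\|_{L^4}$ (or their $L^6$ counterparts), one controls all four resulting pieces by $C_T(1 + M(v) + E(v))$. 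The critical piece is $\int|w|^2|v_x||w_x|\,dx$, estimated as $\|v_x\|_{L^2}\|w\|_{L^\infty}\|w\|_{L^4}\|w_x\|_{L^4} \lesssim_T 1 + E(v)$.

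To close the loop, Young's inequality applied to the cross terms of $|v+w|^4 - |v|^4 - |w|^4$ produces the coercivity $E(v) \leq \tilde{E}(v) + \tfrac{1}{2} E(v) + C_T(1 + M(v))$, hence $E(v) \lesssim_T 1 + M(v) + \tilde{E}(v)$. Combining the mass and energy bounds yields $(M+\tilde{E})'(t) \lesssim_T 1 + (M+\tilde{E})(t)$, and Gronwall gives $\sup_{[0,T]}(M+\tilde{E}) \lesssim_T 1$, hence $\sup_{[0,T]}E(v) \lesssim_T 1$. The main technical obstacle is the absence of $w \in H^2$ or even $w_x \in L^2$ at the prescribed regularity; this makes the derivative cancellation in the cross term essential — subtracting $\text{Re}\!\int v_x\overline{w}_x\,dx$ in the definition of $\tilde{E}$ is precisely what reduces the number of derivatives landing on $w$ from two to one, whereupon the available fixed-time bounds on $w$ and $w_x$ close the argument.
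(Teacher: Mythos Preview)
Your proof is correct and follows essentially the same strategy as the paper: bound $\partial_t(1+M+\tilde E)$ by $C_T(1+M+\tilde E)$, use the coercivity $E \lesssim_T 1+M+\tilde E$, and close by Gr\o nwall from the initial value $v(0)=0$, all justified on a Schwartz approximation. The only difference is bookkeeping for the energy: the paper computes directly $\partial_t\tilde E(v) = (w_t,\,|u|^2u - |w|^2w)$ and then substitutes $w_t = iw_{xx}$ and integrates by parts once, whereas you reach the same terms via the identity $\tilde E(v) = E(u) - E(w) - \mathrm{Re}\int v_x\overline{w}_x\,dx$ together with the conservation of $E(u)$ and of $\tfrac12\|w_x\|_{L^2}^2$; after your cancellation of the third-order pieces in the cross term you recover exactly $(w_t,|u|^2u) - (w_t,|w|^2w)$, so the two computations are algebraically identical.
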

Moreover, Theorem \ref{thm:GlobalWellposedness} follows from this proposition by piecing together the local solutions. We turn to its proof:

\begin{proof}[Proof of Proposition \ref{prop:GlobalExistence}]
Let
\begin{equation*}
(f,g) = \Re \int f(x) \overline{g(x)} dx.
\end{equation*}
We aim to bound the quantity $M(v) + \tilde{E}(v) + 1$ by Gr\o nwall's lemma. For the derivative of $M$, we compute
\begin{equation*}
\begin{split}
\partial_t M(v) = (v,v_t) &= (v,-i(v_{xx} + |v+w|^2(v+w)) \\
&=(v,2|v|^2 w + v^2 \overline{w} + 2|w|^2 v+ w^2 \overline{v} + |w|^2 w).
\end{split}
\end{equation*}
By H\"older's inequality, we compute for $\| w(t) \|_{L^4} \lesssim_T 1$
\begin{equation*}
\partial_t M(v) \lesssim_T E(v)^{\frac{3}{4}} + E(v)^{\frac{1}{2}} + E(v)^{\frac{1}{4}} \lesssim_T M(v) + E(v) + 1.
\end{equation*}
Similarly, for $\| w(t) \|_{L^6} + \| w(t) \|_{L^\infty} \lesssim_T 1$, we find
\begin{equation*}
\partial_t M(v) \lesssim_T M(v)^{\frac{1}{2}} E(v)^{\frac{1}{2}} + M(v)^{\frac{1}{2}} + M(v)^{\frac{1}{2}} \lesssim_T M(v) + E(v) + 1.
\end{equation*}

For the derivative of $\tilde{E}$, we compute
\begin{equation*}
\begin{split}
\partial_t \int \frac{1}{2} |v_x|^2 dx = (-v_t,v_{xx}) &= -(v_t, -i v_t + |v+w|^2(v+w)) \\
&= -(v_t,|v+w|^2(v+w))
\end{split}
\end{equation*}
and
\begin{equation*}
\partial_t \int \frac{1}{4} (|v+w|^4 - |w|^4) dx = (v_t+ w_t, |v+w|^2(v+w)) - (w_t, |w|^2 w).
\end{equation*}
Hence,
\begin{equation*}
\partial_t \tilde{E}(v) = (w_t, |v+w|^2(v+w) - |w|^2 w).
\end{equation*}
This expression has total homogeneity four in $v$, $w$. Let $(h_v,h_w)$ denote the homogeneity in $v$, $w$. Then we have to estimate the cases $(1,3)$, $(2,2)$, $(3,1)$. Let the collected terms be denoted by $A_{(h_v,h_w)}$. If \eqref{eq:M4Embeddings} holds true, then applications of H\"older's inequality give
\begin{equation*}
\begin{split}
|A_{(1,3)}| &= |(w_{xx},2|w|^2 v + \bar{v} w^2)| = |(w_x, 2(\overline{w}_x w v + \overline{w} w_x v + |w|^2 v_x + w_x w \overline{v}) + w^2 v_x)| \\
&\lesssim_T E(v)^{\frac{1}{4}} + E(v)^{\frac{1}{2}}, \\
|A_{(2,2)}| &= |(w_{xx},2|v|^2 w+ \overline{w} v^2)| = |(w_x, 2(v\overline{v}_x w + \overline{v} v_x w+ |v|^2 w_x + \overline{w} v v_x) + \overline{w}_x v^2)|\\
&\lesssim_T E(v)^{\frac{3}{4}} + E(v)^{\frac{1}{2}} M(v)^{\frac{1}{2}} + E(v)^{\frac{1}{2}}, \\
|A_{(3,1)}| &= |(w_{xx},|v|^2 v)| = |(w_x, 2|v|^2 v_x + v^2 \overline{v}_x)| \lesssim_T E(v).
\end{split}
\end{equation*}
If \eqref{eq:M6Embeddings} holds true, then by another application of H\"older's inequality, we find
\begin{equation*}
|A_{(1,3)}| \lesssim_T M(v)^{\frac{1}{2}} + E(v)^{\frac{1}{2}}, \quad |A_{(2,2)}| \lesssim_T M(v)^{\frac{1}{2}} E(v)^{\frac{1}{2}} + M(v)^{\frac{1}{2}}, \quad |A_{(3,1)}| \lesssim_T E(v).
\end{equation*}

Consequently,
\begin{equation*}
\partial_t \tilde{E}(v) \lesssim_T (1+M(v)+E(v)) \lesssim_T (1+M(v)+\tilde{E}(v)).
\end{equation*}
Thus,
\begin{equation*}
\partial_t (1+M(v)+\tilde{E}(v)) \leq C(T) (1+M(v) + \tilde{E}(v))
\end{equation*}
and Gr\o nwall's inequality yields
\begin{equation*}
1+M(v) + \tilde{E}(v) \leq e^{\int_0^T C(t) dt}
\end{equation*}
with $C(t) = C(t,\| u_0 \|_{M^{\frac{3}{2}+\varepsilon}_{4,2}})$, or $C(t)=C(t,\| u_0 \|_{M^{\frac{3}{2}+\varepsilon}_{6,2}})$, respectively. Hence,
\begin{equation*}
M(v) + E(v) \lesssim_T 1.
\end{equation*}
\end{proof}

At last, we remark how the argument of \cite[Section~3]{DodsonSofferSpencer2020} also implies global well-posedness for $f \in L^{4n+2}_s$ for $n \geq 2$ and $s> \alpha+1$ (cp. \eqref{eq:DerivativeLossLp}) or $f \in M^{\tilde{s}}_{4n+2,2}$, where $\tilde{s} > \tilde{\alpha}+1$ (cp. \eqref{eq:DerivativeLossMp}). Let $u_l = \sum_{j=0}^{n-1} u^j$, where $u^j$ is given as in \eqref{eq:HigherIterates} and consider $u = v+u_l$. A blow-up alternative remains valid by the construction of the solution via Strichartz estimates. For this purpose we perceive \eqref{eq:DifferenceV} as an initial value problem with data in $L^2$.
\begin{lemma}
\label{lem:BlowUpSlowDecay}
Let $f \in L^{4n+2}_s$ or $f \in M^{\tilde{s}}_{4n+2,2}$ as above, and $u_l \in L_t^p([0,T],L^{4n+2})$, $v \in S^0([0,T])$ be given as in Theorem \ref{thm:ImprovedLocalWellposednessLpSpaces} for $T<T^*$: If $\limsup_{t \to T^*} \| v(t)\|_{L^2} < \infty$, then $u = u_l + v$ can be continued in $L_t^p L_x^{4n+2}$ beyond $T^*$.
\end{lemma}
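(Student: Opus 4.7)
The plan is to mirror Lemma \ref{lem:BlowUp} by restarting the Cauchy problem at times approaching $T^*$. I would argue by contradiction, assuming $\limsup_{t \to T^*} \|v(t)\|_{L^2} =: C_0 < \infty$, and picking a sequence $t_n \nearrow T^*$ with $\|v(t_n)\|_{L^2} \le 2C_0$. The goal is to reinterpret \eqref{eq:DifferenceV} as an initial value problem for $v$ starting at time $t_n$ with $L^2$ data, and to solve it on an interval $[t_n, t_n + \tau]$ of length $\tau > 0$ \emph{independent of} $n$; choosing $n$ so that $t_n + \tau > T^*$ then yields a continuation of $u = u_l + v$ in $L_t^p L_x^{4n+2}$ past $T^*$, contradicting maximality.

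The first ingredient I would set up is a global-in-time bound on the higher iterates. Extending Lemma \ref{lem:RegularityHigherIterates} from $T=1$ to any finite interval $[0,T^*]$ costs only polynomial factors in $T^*$, arising from the fixed-time propagator bound \eqref{eq:FixedTimeEstimateModulationSpaces} applied to $Lf = u^0$; these losses are then propagated through the multilinear structure of the $u^j$ by Lemma \ref{lem:HoelderModulationSpaces}. This yields a uniform estimate
\begin{equation*}
\| u_l \|_{L_t^\infty([0,T^*], L_x^\infty)} + \| u_l \|_{L_t^\infty([0,T^*], L^{4n+2})} \lesssim_{T^*, \|f\|_D} 1,
\end{equation*}
and similarly controls $\sum_{j=1}^{n-1} u^j$ in any mixed Lebesgue norm needed for the Strichartz dual side.

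With this control in hand, I would rewrite \eqref{eq:DifferenceV} restarted at $t_n$ as
\begin{equation*}
v(t) = U(t-t_n) v(t_n) - i \int_{t_n}^t U(t-s) \bigl[ |v+u_l|^2 (v+u_l) \bigr](s) \, ds - R(t),
\end{equation*}
where $R$ absorbs the restart correction to $\sum_{j=1}^{n-1} u^j$ and, via \eqref{eq:HigherPicardIteratesRelation}, is a trilinear expression in the $u^k$ that is globally bounded by the previous step. I would then solve this equation in $S^0([t_n, t_n+\tau]) = L_t^\infty L^2 \cap L_t^4 L^\infty$ via inhomogeneous Strichartz estimates (Theorem \ref{thm:StrichartzSEQ}): the cross terms $N_3(v,v,u_l)$ and $N_3(v,u_l,u_l)$ are handled by H\"older in time using the uniform $L_{t,x}^\infty$-bound on $u_l$, while the pure cubic term $N_3(v,v,v)$ is estimated by the standard $L^2$-Strichartz argument.

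The main obstacle I anticipate is verifying that $\tau$ is genuinely independent of $n$: every Strichartz constant must depend on the data only through $\|v(t_n)\|_{L^2}$ and the global bound on $u_l$, with no hidden growth in $t_n$. Once this uniformity is in place, the contraction produces $v$ on $[t_n, t_n+\tau]$ for all large $n$, and $u = u_l + v$ extends in $L_t^p L_x^{4n+2}$ past $T^*$, contradicting the maximality of $T^*$.
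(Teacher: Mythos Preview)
Your proposal is correct and follows exactly the approach the paper indicates: the paper itself does not prove Lemma~\ref{lem:BlowUpSlowDecay} in detail, but only remarks that ``A blow-up alternative remains valid by the construction of the solution via Strichartz estimates. For this purpose we perceive \eqref{eq:DifferenceV} as an initial value problem with data in $L^2$,'' which is precisely what you do by restarting the $v$-equation at $t_n$ with $v(t_n)\in L^2$ and running the contraction in $S^0$ with uniform bounds on $u_l$ supplied by Lemma~\ref{lem:RegularityHigherIterates}. One small cosmetic point: the lemma is already phrased as ``if $\limsup<\infty$ then one can continue,'' so no contradiction hypothesis is needed---you are giving a direct proof; also, if you carry out the Duhamel splitting at $t_n$ carefully (using that $\sum_{j=1}^{n-1}u^j$ itself satisfies a Duhamel formula via \eqref{eq:HigherPicardIteratesRelation}), the ``restart correction $R$'' you introduce cancels exactly, and the restarted equation reads $v(t)=U(t-t_n)v(t_n)-i\int_{t_n}^t U(t-s)\bigl[|v+u_l|^2(v+u_l)-|\sum_{k=0}^{n-2}u^k|^2\sum_{k=0}^{n-2}u^k\bigr]ds$, whose source term is then estimated as in Proposition~\ref{prop:ExistenceV}.
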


 Hence, it suffices to prove
\begin{equation*}
\sup_{t \in [0,T]} M(v(t)) + E(v(t)) \lesssim_T 1
\end{equation*}
for global existence (cf. Proposition \ref{prop:GlobalExistence}). The key ingredient for this is (cf. \cite[Eq.~(3.39)]{DodsonSofferSpencer2020})
\begin{equation}
\label{eq:LpEstimateHigherPicardIterates}
\| u_l(t) \|_{L^\infty \cap L^{4n+2}} + \| \partial_x u_l(t) \|_{L^\infty \cap L^{4n+2}} \lesssim_{T^*} 1,
\end{equation}
which follows in the present context from Lemma \ref{lem:RegularityHigherIterates}. At this point, the claim on global well-posedness follows from the algebraic considerations in the proof of \cite[Theorem~7]{DodsonSofferSpencer2020}.

%Furthermore, it seems possible with the sharp smoothing and fixed-time estimates to prove global results for initial data in $L_s^{4n+2}$ or $M^s_{4n+2,2}$ using arguments due to Dodson \emph{et al.} \cite{DodsonSofferSpencer2020}; see also the previous subsection. As this approach still loses many derivatives as it does not use smoothing of the Duhamel term, the details are omitted. We plan to return to this problem in future work.

\section{Variable-coefficient decoupling inequalities for non-elliptic Schr\"odinger equations}
\label{section:VariableCoefficientDecoupling}
In this section we prove variable-coefficient decoupling inequalities for elliptic and hyperbolic phase functions. We start with describing the set-up in Subsection \ref{subsection:IntroductionVariableCoefficient} and then carry out the proof in Subsection \ref{subsection:ellipticDecoupling}.
\subsection{Variable-coefficient oscillatory integral operators}
\label{subsection:IntroductionVariableCoefficient}
We consider smooth functions \\ $a \in C^\infty_c(\R^{d+1} \times \R^{d}), \; a=a_1 \otimes a_2$, $0 \leq a_1, a_2 \leq 1$ and $\phi: B^{d+1}(0,1) \times B^{d}(0,1) \rightarrow \R$, which we shall refer to as amplitude and phase function.

 We associate the oscillatory integral operator
\begin{equation}
\label{eq:oscillatoryIntegralOperator}
Tf(t,x) = \int_{\R^{d}} e^{i \phi(t,x,\xi)} a(t,x,\xi) f(\xi) d\xi
\end{equation}
and the rescaled versions
\begin{equation}
\label{eq:rescaledOscillatoryIntegralOperator}
T^\lambda f(t,x) = \int_{\R^{d}} e^{i \lambda \phi(t/\lambda,x/\lambda,\xi)} a(t/\lambda,x/\lambda,\xi) f(\xi) d\xi
\end{equation}
for different classes of phase functions.

Subject of discussion are variable-coefficient generalizations of the phase function
\begin{equation*}
\phi_{hyp}(t,x;\xi) = \langle x, \xi \rangle + \frac{t \langle \xi, I_d^k \xi \rangle}{2}, \quad I^k_d = diag(1,\ldots,1,\underbrace{-1,\ldots,-1}_k), \quad 0 \leq k \leq d/2.
\end{equation*}
Set also $I_d=diag(1,\ldots,1) \in \mathbb{R}^{d \times d}$.\\
In the following we shall always assume that there are at most as many negative eigenvalues as positive eigenvalues, which is no loss of generality since time reversal $t \rightarrow -t$ flips signs.

We define the Gauss map by
\begin{equation}
\label{eq:definitionGaussMap}
G: B^{d+1} \times B^{d} \rightarrow \mathbb{S}^{d}, \quad G(z;\xi) = \frac{G_0(z;\xi)}{|G_0(z;\xi)|}; \quad z=(t,x),
\end{equation}
 where $m \in \mathbb{N}$ and $B_m$ denotes the unit ball in $\R^m$ and
 \begin{equation}
 \label{eq:definitionG0}
 G_0(z;\omega) = \bigwedge_{j=1}^{n} \partial_{\xi_j} \partial_z \phi(z;\xi)
 \end{equation}
with the standard identification $\bigwedge^{d} \R^{d+1} \cong \R^{d+1}$.

We impose the following conditions on the phase function:
\begin{equation*}
\begin{split}
&H1) \quad \text{rank} \; \partial^2_{\xi x} \phi(z;\xi) = d \quad \forall \; (z,\xi) \in B^{d+1} \times B^{d}, \\
&H2) \quad \partial^2_{\xi \xi} \left. \langle \partial_z \phi(z;\xi), G(z;\xi_0) \rangle \right\rvert_{\xi = \xi_0} \text{ is non-degenerate}.
\end{split}
\end{equation*}
$H1)$ is a non-degeneracy condition, and $H2)$ implies that the constant coefficient approximation of $\phi$ is the adjoint Fourier restriction operator (i.e. extension operator) associated to a non-degenerate surface.

Contrary to the constant-coefficient case $\phi_{hyp}$, rescaling $(t,x) \rightarrow (\lambda^2 t,\lambda x), \;  \xi \rightarrow \xi/\lambda$ yields no exact symmetry. Therefore, it is useful to quantify the conditions $H1)$ and $H2)$. Before doing so, we point out the following more precise versions of $H1)$ and $H2)$, which one may assume without loss of generality:
\begin{equation*}
\begin{split}
&H1^\prime) \quad \det \partial^2_{\xi x} \phi(z;\xi) \neq 0 \text{ for all } (z;\xi) \in T \times X \times \Xi = Z \times \Xi; \\
&H2^\prime_{[k]}) \quad \partial_t \partial^2_{\xi \xi} \phi(z;\xi) \text{ is non-degenerate for all } (z;\xi) \in Z \times \Xi \\
&\quad \quad \text{ and has exactly } k \text{ negative eigenvalues.}
\end{split}
\end{equation*}

Here, $T,X,\Xi$ denote balls of radius less or equal to one around the origin.
To reduce from $H1)$ and $H2)$ to the conditions in the above display, one applies a rotation in space-time. This gives $G(0;0) = e_{d+1}$, and then one uses a partition of unity to suitably localize the support. Moreover, the implicit function theorem implies the existence of smooth functions $\Phi$ and $\Psi$ taking values in $X$ and $\Omega$, respectively, such that
\begin{equation}
\label{eq:definitionPsi}
\partial_x \phi(z;\Psi(z;\xi)) = \xi
\end{equation}
and
\begin{equation}
\label{eq:definitionPhi}
\partial_\xi \phi(t,\Phi(t,x;\xi);\xi) = x.
\end{equation}

The first identity allows us to find a graph parametrization $\xi \mapsto (\partial_z \phi(z;\Psi(z;\xi))) = (\xi,(\partial_t \phi)(z;\Psi(z;\xi)))$ for a hypersurface $\Sigma_{z}$ with non-vanishing curvature. From differentiating the second identity we find $\partial_x \Phi(0;0) = \partial^2_{x \xi} \phi(0;0)^{-1}$.\\
Later on, $H1^\prime)$ and $H2^\prime)$ are quantified. It turns out that one can perceive any phase function satisfying $H1^\prime)$ and $H2^\prime)$ after introducing a partition of unity and rescaling as small smooth perturbations of $\phi_{hyp} = \langle x, \xi \rangle + \frac{t \langle \xi, I^d_k \xi \rangle}{2}$.

For $h \in C^2(B^{d}(0,1),\R)$ let the extension operator $E_h$ be given by
\begin{equation*}
E_h f(t,x) = \int_{B^{d}(0,1)} e^{i(x \xi + t h(\xi))} f(\xi) d\xi,
\end{equation*}
where $f \in L^2$, $\text{supp}(f) \subseteq B^d(0,1)$ and define a smooth weight function, which is essentially a characteristic function on some ball $B^{d+1}(\overline{z},R)$, $\overline{z} = (\overline{t},\overline{x})$:
\begin{equation*}
w_{B(\overline{z},R)}(t,x) = (1+R^{-1}|x-\overline{x}|+R^{-1}|t-\overline{t}|)^{-N}
\end{equation*}
for some large integer $N \in \N$, which is fixed later.

We define the decoupled $L^p$-norm for variable coefficient operators for \\
$1 \leq R \leq \lambda$. Let $\mathcal{T}_R$ denotes a finitely overlapping family of $R^{-1/2}$ balls covering $B^d(0,1)$. Set
\begin{equation*}
\Vert T^\lambda f \Vert_{L^{p,R}_{dec}(S)} = \left( \sum_{\tau \in \mathcal{T}_R} \Vert T^\lambda f_{\tau} \Vert^2_{L^p(S)} \right)^{1/2}
\end{equation*}
for $S$ measurable and
\begin{equation}
\label{eq:decouplingParameter}
\alpha(p,k) = 
\begin{cases}
k \left( \frac{1}{4} - \frac{1}{2p} \right), \; &2 \leq p \leq \frac{2(d+2-k)}{d-k}, \\
\frac{d}{4} - \frac{d+2}{2p}, \; &\frac{2(d+2-k)}{d-k} \leq p < \infty.
\end{cases}
\end{equation}

We recall the constant-coefficient $\ell^2$-decoupling theorem proved in \cite{BourgainDemeter2015, BourgainDemeter2017GeneralDecoupling}:
\begin{theorem}{\cite[Theorem~1.2,~p.~280]{BourgainDemeter2017GeneralDecoupling}}
\label{thm:l2DecouplingConstantCoefficients}
Let $R \geq 1$, $N \geq 10$, $ 2 \leq p < \infty, 0 \leq k \leq d/2$, $\alpha(p,k)$ as in \eqref{eq:decouplingParameter} and $h:B^{d}(0,1) \rightarrow \R$ be a $C^2$-function with Hessian $\partial^2_{\xi \xi} h$ having modulus of eigenvalues in $[C^{-1},C]$ for some $C > 0$. Then, we find for $f$ with $supp(f) \subseteq B(0,1)$ the following estimate to hold:
\begin{equation*}
\Vert E_h f \Vert_{L^{p}(w_{B_R})} \lesssim_{C,N,\varepsilon} R^{\alpha(p,k)+\varepsilon} \Vert E_h f \Vert_{L^{p,R}_{dec}(w_{B_R})}
\end{equation*}
provided that $N \geq N(d,p)$.
\end{theorem}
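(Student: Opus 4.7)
The plan is to follow the Bourgain--Demeter induction-on-scales scheme that derives linear $\ell^2$-decoupling from a multilinear Kakeya input, via: normalization to a model hyperboloid, broad-narrow decomposition at an intermediate scale $K$, multilinear restriction on the broad part, parabolic rescaling on each cap, and iteration in $R$.

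First I would normalize. Using a smooth partition of unity on $B^d(0,1)$, affine changes of variables in $\xi$, and the hypothesis that $\partial^2_{\xi\xi} h$ has eigenvalue moduli in $[C^{-1},C]$, one reduces to the case that $h$ is a $C^2$-small perturbation of the model phase $h_k(\xi) = \tfrac{1}{2}\langle \xi, I^k_d \xi \rangle$ supported in a small cap. Define
\begin{equation*}
D_{p,k}(R) = \sup_{h,f} \frac{\| E_h f \|_{L^p(w_{B_R})}}{\bigl(\sum_{\tau\in\mathcal{T}_R} \| E_h f_\tau \|_{L^p(w_{B_R})}^2\bigr)^{1/2}},
\end{equation*}
the supremum running over admissible $h$ in the normalized class and nonzero $f$ supported in $B^d(0,1)$; it remains to show $D_{p,k}(R) \lesssim_\varepsilon R^{\alpha(p,k)+\varepsilon}$.

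The key input is the multilinear Kakeya/restriction estimate of Bennett--Carbery--Tao in Guth's $R^\varepsilon$-form: whenever $d+1$ caps on the non-degenerate hypersurface carry unit normals forming a quantitatively transverse frame, the corresponding extension operators obey a sharp $L^{2(d+1)/d}$ product bound on $B_R$ modulo $R^\varepsilon$. Fix an intermediate scale $K$ and partition $B^d(0,1)$ into $K^{-1/2}$-caps $\theta$. One splits $\|E_h f\|_{L^p(B_R)}^p$ into a broad part, where enough caps contribute with pairwise transverse normals, and a narrow part, where the contributing caps cluster along a lower-dimensional affine subspace; the broad part is dispatched by the multilinear estimate composed with H\"older. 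For each single cap $\theta$, the affine symmetry of the hyperboloid rescales $\theta$ to the unit cap and identifies $\|E_h f_\theta\|_{L^p(B_R)}$ with $D_{p,k}(R/K)$ up to explicit Jacobian factors (the class of admissible phases is preserved by compactness). Assembling these ingredients produces a schematic functional inequality of the form
\begin{equation*}
D_{p,k}(R) \lesssim_\varepsilon R^\varepsilon K^{\alpha(p,k)} D_{p,k}(R/K)^{1-\theta_0} + K^{-\delta} D_{p,k}(R)
\end{equation*}
for some $\delta,\theta_0 > 0$, and iterating over $O(\log R / \log K)$ scales closes the target bound.

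The main obstacle is the broad-narrow step in the hyperbolic regime $k \geq 1$: along the $k$ negative directions of $I^k_d$, normals to caps may remain nearly parallel, so the naive $(d+1)$-linear Kakeya cannot be applied generically. One has to restrict the multilinear estimate to tuples whose normals genuinely span a $(d+2-k)$-dimensional transverse frame and dispatch the remaining caps through a lower-dimensional decoupling invoked as part of the induction hypothesis. It is precisely this loss of effective transversality that forces the breakpoint $p = \tfrac{2(d+2-k)}{d-k}$ and produces the piecewise definition of $\alpha(p,k)$ in~\eqref{eq:decouplingParameter}.
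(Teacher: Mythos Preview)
The paper does not prove this theorem. It is quoted from Bourgain--Demeter \cite{BourgainDemeter2017GeneralDecoupling}, and the only argument the paper supplies is the remark that the result there is literally stated for the model hyperboloid $h(\xi)=\sum_i \alpha_i\xi_i^2$, while the extension to general $C^2$ phases with eigenvalue moduli in $[C^{-1},C]$ follows by the Pramanik--Seeger / Bourgain--Demeter approximation-and-rescaling procedure (approximate $h$ on small caps by its second-order Taylor polynomial, apply the model decoupling, and propagate back by parabolic rescaling). That is the entirety of the paper's ``proof''.

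Your proposal goes much further: you sketch the actual Bourgain--Demeter machine (broad--narrow, multilinear Kakeya, parabolic rescaling, iteration). That is the right circle of ideas for the cited theorem, but as written it is too schematic to count as a proof. In particular, the recursive inequality you wrote,
\[
D_{p,k}(R)\lesssim_\varepsilon R^\varepsilon K^{\alpha(p,k)} D_{p,k}(R/K)^{1-\theta_0}+K^{-\delta}D_{p,k}(R),
\]
does not close to $D_{p,k}(R)\lesssim R^{\alpha(p,k)+\varepsilon}$ by iteration alone; the Bourgain--Demeter argument does not reduce to a single scalar recursion of this form. One has to track a \emph{family} of quantities (in the original paper, the parameters $a_p(q,B^r,q)$ and the associated multilinear decoupling constants), interpolate between the multilinear $L^{2(d+2-k)/(d-k)}$ bound and a trivial $L^2$ bound, and run a bootstrap that moves a continuous parameter rather than a simple iteration in $R$. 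You also correctly flag the hyperbolic obstruction (loss of transversality along the negative directions), but ``dispatch the remaining caps through a lower-dimensional decoupling invoked as part of the induction hypothesis'' is exactly the hard content of \cite{BourgainDemeter2017GeneralDecoupling} and cannot be waved through; one needs the precise narrow decoupling statement on the flat directions and a careful count to recover the exponent $\alpha(p,k)$.

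In short: relative to what the paper does, you are attempting something much more ambitious; relative to an actual proof of the cited theorem, your sketch names the right ingredients but the iteration step and the hyperbolic narrow analysis are genuine gaps.
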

Strictly speaking, this result was proved in \cite{BourgainDemeter2017GeneralDecoupling} only for the hyperboloid $h(\xi) = \sum_{i=1}^d \alpha_i \xi_i^2$. However, the arguments from \cite{PramanikSeeger2007}, which are illustrated in the context of elliptic surfaces in \cite[Section~7]{BourgainDemeter2015}, yield the more general translation invariant case in a straight-forward manner. See also the discussion below.

Originally, decoupling inequalities were studied for the cone by Wolff in \cite{LabaWolff2002,Wolff2000} to make progress on $L^p$-smoothing estimates (cf. \cite{MockenhauptSeegerSogge1992,MockenhauptSeegerSogge1993}) for the wave equation. These estimates were refined (cf. \cite{GarrigosSeeger2009PlateDecompositions,Bourgain2013MomentInequalities}) until the breakthrough result of Bourgain-Demeter (cf. \cite{BourgainDemeter2015,BourgainDemeter2017StudyGuide}) where sharp decoupling inequalities for the paraboloid were proved. Subsequently, the result was generalized to hyperboloids (cf. \cite{BourgainDemeter2017GeneralDecoupling}). These results also give estimates for exponential sums, in particular essentially sharp Strichartz estimates on irrational tori.

The theory was also extended to non-degenerate curves (cf. \cite{BourgainDemeterGuth2016}). As already pointed out in Beltran-Hickman-Sogge \cite{BeltranHickmanSogge2020}, the decoupling theory seems to extend to the variable coefficient case sharply divergent from the $L^p-L^q$-estimates for oscillatory integral operators. In fact, it is well known that there are strictly less estimates admissible in the constant coefficient case due to Kakeya compression (cf. \cite{Bourgain1991OscillatoryIntegralsSeveralVariables,Bourgain1995NewEstimatesOscillatoryIntegrals,Wisewell2005}). 
%In Beltran--Hickman--Sogge \cite{BeltranHickmanSogge2020} variable-coefficient versions of 
%For recently proved sharp $L^p-L^p$-estimates for variable-coefficient oscillatory integral operators, we refer to Guth-Hickman-Iliopoulou \cite{GuthHickmanIliopoulou2019}.

Our first result is the following extension of Theorem \ref{thm:l2DecouplingConstantCoefficients}:
\begin{theorem}
\label{thm:l2DecouplingVariableCoefficients}
Let $2 \leq p < \infty$, $n,M \in \mathbb{N}$, $0 \leq k \leq d/2$ and $\alpha(p,k)$ like in \eqref{eq:decouplingParameter}. Suppose that $(\phi,a)$ satisfies $H1^\prime)$ and $H2^\prime_{[k]})$. Then, we find the following estimate to hold:
\begin{equation}
\label{eq:variableCoefficientl2Decoupling}
\Vert T^\lambda f \Vert_{L^p(\R^{d+1})} \lesssim_{\varepsilon, \phi, M,a} \lambda^{\alpha(p,k)+\varepsilon} \Vert T^\lambda f \Vert_{L^{p,\lambda}_{dec}(\R^{d+1})} + \lambda^{-M} \Vert f \Vert_2.
\end{equation}
\end{theorem}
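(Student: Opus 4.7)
The plan is to prove \eqref{eq:variableCoefficientl2Decoupling} by an induction-on-scales argument in the spirit of Pramanik--Seeger \cite{PramanikSeeger2007} and the variable-coefficient approach of Beltran--Hickman--Sogge \cite{BeltranHickmanSogge2020}, using Theorem~\ref{thm:l2DecouplingConstantCoefficients} as the input at each scale. Throughout, fix $p$ and $k$ and write $\alpha = \alpha(p,k)$. First I would reduce to phases $\phi = \phi_{hyp} + \psi$ with $\psi$ arbitrarily small in a fixed $C^N$-norm on the support of $a$: using $H1')$--$H2'_{[k]})$ together with the implicit function theorem and the parametrization \eqref{eq:definitionPsi}, one normalizes $\phi$ so that $\partial_{\xi x}^2\phi(0,0) = I_d$ and $\partial_t\partial_{\xi\xi}^2\phi(0,0) = I_d^k$, and a partition of unity in $(z,\xi)$ followed by rescaling on pieces of small diameter shrinks the $C^N$-distance to $\phi_{hyp}$ below any prescribed threshold, at the cost of bounded combinatorial overhead. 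I would then let $D_p(R)$ denote the best constant such that
\begin{equation*}
\|T^\lambda f\|_{L^p(\R^{d+1})} \leq D_p(R)\,\lambda^\alpha\,\|T^\lambda f\|_{L^{p,\lambda}_{dec}(\R^{d+1})} + \lambda^{-M}\|f\|_2
\end{equation*}
holds for all $1 \leq \lambda \leq R$ and all normalized $(\phi,a)$ with uniform bounds, with the goal of showing $D_p(R) \lesssim_\varepsilon R^\varepsilon$.

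The induction step is a two-scale decomposition. I would pick an intermediate scale $K$ with $1 \ll K \ll \lambda^{1/2}$ and, after absorbing a non-stationary-phase tail into $\lambda^{-M}\|f\|_2$, restrict to an $O(\lambda)$-ball in physical space which I cover by $K$-balls $B$. On each $B$, a Taylor expansion of $\lambda\phi(z/\lambda,\xi)$ in the $z$-variable around the center of $B$ produces a constant-coefficient model phase $\langle x,\xi\rangle + t\,q_B(\xi)$ --- with $q_B$ a quadratic form of signature $(d-k,k)$ by $H2'_{[k]})$ --- plus a remainder of size $O(K^2/\lambda) = o(1)$ which I would Fourier-expand into a rapidly convergent sum. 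Applying Theorem~\ref{thm:l2DecouplingConstantCoefficients} at radius $K$ on $B$ and summing in $\ell^p$ over $B$ via Minkowski's inequality (valid since $p \geq 2$) yields
\begin{equation*}
\|T^\lambda f\|_{L^p(\R^{d+1})} \lesssim_\varepsilon K^{\alpha+\varepsilon}\Big(\sum_{\tau\,:\,K^{-1/2}\text{-cap}}\|T^\lambda f_\tau\|_{L^p(\R^{d+1})}^2\Big)^{1/2} + \lambda^{-M}\|f\|_2.
\end{equation*}
For each cap $\tau$ of radius $K^{-1/2}$ centered at $\xi_\tau$, the change of variables $\xi = \xi_\tau + K^{-1/2}\eta$ in frequency, paired with a compatible affine change in physical space, would transform $T^\lambda f_\tau$ into an operator $\widetilde T^{\lambda/K}\widetilde f$ whose phase $\widetilde\phi$ again satisfies the normalized $H1')$ and $H2'_{[k]})$ with uniform constants; invoking the induction hypothesis at scale $\lambda/K$ then gives $D_p(\lambda) \leq C_\varepsilon K^\varepsilon D_p(\lambda/K)$, which iterates (with $K$ a small fixed power of $\lambda$) to the desired $R^\varepsilon$ bound.

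The main obstacle is the parabolic rescaling step in the variable-coefficient setting. Unlike the constant-coefficient paraboloid, rescaling a cap $\tau$ produces a genuinely new phase $\widetilde\phi$, and verifying that $H1')$ and $H2'_{[k]})$ hold for $\widetilde\phi$ with \emph{uniform} quantitative constants across all scales requires a careful computation of the graph reparametrization of the variable surface $\Sigma_z$ near $\xi_\tau$ and of the induced transformation of the Gauss map \eqref{eq:definitionGaussMap}; in particular, the structural assumption $H2'_{[k]})$ on the \emph{signature}, not just non-degeneracy, must be propagated through the rescaling, which is what allows Theorem~\ref{thm:l2DecouplingConstantCoefficients} to be reapplied at the next scale with the same exponent $\alpha(p,k)$. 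A secondary technical point is the quantitative amplitude and phase approximation on each $K$-ball; the accumulated errors, together with the physical-space localization tail, are what is collected into the rapid-decay term $\lambda^{-M}\|f\|_2$.
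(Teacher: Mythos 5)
Your proposal follows essentially the same route as the paper: reduce to normalized small perturbations of $\phi_{hyp}$, approximate $T^\lambda$ on $K$-balls by frozen-coefficient extension operators via a Fourier-series expansion of the $O(K^2/\lambda)$ error phase, apply Theorem \ref{thm:l2DecouplingConstantCoefficients} at scale $K$, and close an induction on scales through parabolic rescaling of the $K^{-1/2}$-caps. The step you flag as the main obstacle --- verifying that the rescaled phases satisfy the quantified versions of $H1^\prime)$ and $H2^\prime_{[k]})$ (including the signature) with uniform constants --- is indeed where the paper invests most of its effort (Lemmas \ref{lem:trivialRescaling} and \ref{lem:parabolicRescalingDecoupling}); note only that the frozen phase is $\langle x,\xi\rangle + t\,h_{\overline{z}}(\xi)$ with $h_{\overline{z}}$ a general function whose Hessian has the correct signature, not literally a quadratic form $q_B$.
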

For variable-coefficient generalizations of the phase function $\phi_{cone}(t,x;\xi) = \langle x, \xi \rangle + t |\xi|$ associated to the adjoint Fourier restriction problem of the cone this was carried out in \cite{BeltranHickmanSogge2020}. The proof of Theorem \ref{thm:l2DecouplingVariableCoefficients} adapts this general strategy from \cite{BeltranHickmanSogge2020} to prove variable-coefficient decoupling from constant-coefficient decoupling: on small spatial scales the variable-coefficient oscillatory integral operator is well-approximated by a constant-coefficient operator. It is enough to make progress on this small scale because it extends to any scale by means of parabolic rescaling. Moreover, Iosevich--Liu--Xi  \cite{IosevichLiuXi2019} investigated decoupling inequalities for phase functions with a symmetric Carleson--Sj\"olin condition. More recently, Hickman--Iliopoulou \cite{HickmanIliopoulou2020} also showed narrow decoupling inequalities in the variable-coefficient indefinite signature case.

Already in the context of constant coefficients, approximating one surface by another on small scales and recovering arbitrary scales by rescaling was used to derive decoupling estimates for more general elliptic surfaces or the cone (cf. \cite[Section~7,~8]{BourgainDemeter2015}), see also \cite{PramanikSeeger2007,GuoOh2020}.

It seems plausible that a similar approximation derives the variable-coefficient cone decoupling from the variable-coefficient paraboloid decoupling. Recently, in \cite{Harris2019} was shown by the same approximation that broad-narrow considerations are also valid for the cone. We do not pursue this line of argument.

We recall different consequences of Theorem \ref{thm:l2DecouplingVariableCoefficients}: The variable-coefficient $\ell^2$-decoupling implies a stability theorem for exponential sums which is proved using the argument in \cite{BourgainDemeter2015} for the constant-coefficient case. Moreover, on small spatial scales the broad-narrow considerations from the constant-coefficient case extends to the variable- coefficient case. As used above, the decoupling theorem implies Strichartz and smoothing estimates without further arguments (e.g. dispersive estimates for the propagator).
\subsection{Variable-coefficient decoupling for hyperbolic phase functions}
\label{subsection:ellipticDecoupling}
\subsubsection{Basic reductions}
Before we begin the proof of Theorem \ref{thm:l2DecouplingVariableCoefficients} in earnest, we carry out several reductions. Most importantly, we quantify the conditions $H1^\prime)$ and $H2^\prime_{[k]})$.
In dependence of $\varepsilon$, $M$ and $p$ from Theorem \ref{thm:l2DecouplingVariableCoefficients}, we choose a small constant $0 < c_{par} \ll 1$ and a large integer $N=N_{\varepsilon,M,p}$ and define the following conditions which we will impose on the phase function for $A \geq 1$:
\begin{itemize}
\item[$H1_{1})$] $|\partial^2_{x \xi} \phi(z;\xi) - I_d | \leq c_{par}$ for all $(z, \xi) \in Z \times \Xi$,
\item[$H2_{[k]}^{1})$] $|\partial_t \partial_\xi^2 \phi - I_d | \leq c_{par}$ for all $(z,\xi) \in Z \times \Xi$,
\item[$D1^1_{1})$] $\Vert \partial_{x_k} \partial_\xi^{\beta} \phi \Vert_{L^\infty(Z \times \Xi)} \leq c_{par}$ for $2 \leq |\beta| \leq N$,
\item[$D1^2_{1})$] $\Vert \partial_t \partial_\xi^{\beta} \phi \Vert_{L^\infty(Z \times \Xi)} \leq c_{par}$ for $3 \leq |\beta| \leq N$,
\item[$D2_{A})$] $\Vert \partial_z^2 \partial_\xi^\beta \phi \Vert_{L^\infty} \leq \frac{c_{par} A }{100 n}$ for $1 \leq |\beta| \leq 2N$.
\end{itemize}
For technical reasons we also impose the following margin condition on the positional part $a_1$ of the amplitude $a$:
\begin{enumerate}
\item[$M_{A})$] $\text{dist}(\text{supp} a_1, \R^{n+1} \backslash Z) \geq 1/(4 A)$.
\end{enumerate}
We already note the following consequence of $H2_{[k]}^1)$:
\begin{equation}
\label{eq:almostLinearGroupVelocity}
|\partial_t \nabla_\xi \phi | \leq 2 |\xi|.
\end{equation}

 In \cite{GuthHickmanIliopoulou2019} it was shown that after introducing suitable partition of unities and performing changes of variables an elliptic phase function satisfying $H1^\prime)$ and $H2^\prime_{[0]})$ reduces to the following normal form:
\begin{equation}
\label{eq:normalFormEllipticPhaseFunction}
\phi(t,x;\xi) = \langle x, \xi \rangle + \frac{t |\xi|^2}{2} + \mathcal{E}(x,t;\xi)
\end{equation}
with $\mathcal{E}$ being quadratic in $(t,x)$ and $\xi$, to say
\begin{equation*}
\partial_{(x,t)}^\alpha \partial_\xi^\beta \mathcal{E}(0;\xi) = 0 \quad \forall |\alpha| \leq 1, \; \beta \in \mathbb{N}_0^d.
\end{equation*}

The explicit representation \eqref{eq:normalFormEllipticPhaseFunction} is not required for the following arguments. However, it is useful to keep it in mind stressing the nature of a small smooth perturbation to $\phi_{hyp}$.
We refer to data $(\phi,a)$ satisfying the above conditions for some $A \geq 1$ and $0 \leq k \leq d/2$ as type $(A,k)$-data. The notation and nomenclature is analogous to \cite{BeltranHickmanSogge2020} to point out the similarity to the case of homogeneous variable-coefficient phase functions.

It turns out that these conditions are invariant under parabolic rescaling in a uniform sense, and this allows us to run the induction argument for normalized data. However, to reduce arbitrary hyperbolic phase functions, we have to do one rescaling which depends on the phase function. This gives rise to the dependence on $\phi$ in \eqref{eq:variableCoefficientl2Decoupling}. If we confine ourselves in \eqref{eq:variableCoefficientl2Decoupling} to normalized data, there will be no explicit dependence on $\phi$. We define the relevant constant as follows, where $\varepsilon$, $M$ and $p$ were fixed above and $c_{par}$ and $N=N_{\varepsilon,M,p}$ in the definition of normalized data are chosen in dependence.

We denote by $\mathfrak{D}^\varepsilon_{A,k}(\lambda;R)$ the infimum over all $D \geq 0$ so that the estimate
\begin{equation*}
\Vert T^\lambda f \Vert_{L^p(B_R)} \leq D R^{\alpha(p,k)+\varepsilon} \Vert T^\lambda f \Vert_{L^{p,R}_{dec}(w_{B_R})} + R^{2d} (\lambda/R)^{-N/8} \Vert f \Vert_{L^2}
\end{equation*}
holds true for all data $(\phi,a)$ of type $(A,k)$, balls $B_R$ of radius $R$ contained in $B(0,\lambda)$ and $f \in L^2(B^d(0,1))$. For the weight function we take $N$ as in $D2_{A})$. The estimate
\begin{equation}
\label{eq:BoundInduction}
\mathfrak{D}^\varepsilon_{1,k}(\lambda;R) \leq C_\varepsilon
\end{equation}
implies Theorem \ref{thm:l2DecouplingVariableCoefficients} since we can reduce to normal data. It turns out that it is enough to prove the following proposition:
\begin{proposition}
\label{prop:inductionBound}
Let $1 \leq R \leq \lambda^{1-\varepsilon/d}$. Then, we find the estimate \eqref{eq:BoundInduction} to hold true.
\end{proposition}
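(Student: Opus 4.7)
The plan is to induct on $\lambda$, running a two-stage argument at each step: a constant-coefficient decoupling on small spatial balls followed by parabolic rescaling that invokes the inductive hypothesis. The base case where $\lambda \leq \lambda_0(\varepsilon,d,p)$ is immediate from Cauchy--Schwarz, since then $|B_R| \lesssim 1$ and everything is controlled by $\|f\|_{L^2}$. The restriction $R \leq \lambda^{1-\varepsilon/d}$ guarantees that the parabolic rescaling drops the effective scale by a definite multiplicative factor, so the induction terminates after $O(\varepsilon^{-1})$ iterations.

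For the inductive step, I fix an intermediate scale $K = K(\varepsilon)$ as a small dyadic power, say $K \approx R^{1-\delta}$ with $\delta = \delta(\varepsilon)$, and cover $B_R$ by a finitely overlapping family of balls of radius $K$. On each such ball $B = B(\bar z, K)$ I freeze the slow variable of $\phi$ at $\bar z/\lambda$: condition $D2_A)$ together with the scale separation yields that $\lambda \phi(\cdot/\lambda, \xi)$ differs on $B$ from its affine-in-$z$ Taylor polynomial by an error of order $K^2/\lambda$, which is negligible since $K \ll \lambda^{1/2}$. The frozen affine phase parametrizes via $\xi \mapsto \nabla_z\phi(\bar z/\lambda,\xi)$ a hypersurface of signature $(d-k,k)$ by $H1_1)$ and $H2^1_{[k]})$, so Theorem \ref{thm:l2DecouplingConstantCoefficients} yields
\begin{equation*}
\|T^\lambda f\|_{L^p(B)} \lesssim_\varepsilon K^{\alpha(p,k) + \varepsilon/2}\|T^\lambda f\|_{L^{p,K}_{dec}(w_B)} + \text{acceptable error}.
\end{equation*}
Summing over $B$ in $\ell^p$ and using Minkowski's inequality reduces the task to bounding $\|T^\lambda f_\tau\|_{L^p(w_B)}$ for frequency caps $\tau$ of width $K^{-1/2}$. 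For each such $\tau$ centered at $\xi_\tau$, I perform parabolic rescaling around $\xi_\tau$: anisotropically rescaling $z$ to absorb the group velocity $\partial_t\nabla_\xi\phi(\bar z/\lambda,\xi_\tau)$ of waves at $\xi_\tau$ transforms $T^\lambda$ into an operator of the form $T^{\lambda/K}$ with a rescaled phase $\tilde\phi$; the ball $B_R$ becomes $B_{R/K}$ and the Fourier support returns to $B^d(0,1)$. Applying the inductive hypothesis at the smaller scale $\lambda/K$ with radius $R/K$ (which still satisfies $R/K \leq (\lambda/K)^{1-\varepsilon/d}$ by the choice of $K$) produces decoupling into $(R/K)^{-1/2}$-caps, which pull back to $R^{-1/2}$-caps in the original variables. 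The combined factor $K^{\alpha(p,k)+\varepsilon/2} \cdot C_\varepsilon (R/K)^{\alpha(p,k)+\varepsilon/2} \leq C_\varepsilon R^{\alpha(p,k)+\varepsilon}$ closes the induction by absorbing the per-step loss into the $R^\varepsilon$ margin.

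The main obstacle is verifying that the rescaled data $(\tilde\phi, \tilde a)$ is again of type $(A',k)$ with $A'$ controlled independently of $\lambda$, so that the inductive hypothesis applies at scale $\lambda/K$ with the same bounding constant $C_\varepsilon$. Concretely one must re-check $H1_1)$, $H2^1_{[k]})$, the $c_{par}$-smallness in $D1^{1,2}_1)$, an updated $D2_{A'})$, and the margin condition $M_{A'})$ for $\tilde\phi$. This is where the normal form \eqref{eq:normalFormEllipticPhaseFunction} and the freedom to choose $c_{par} \ll \varepsilon$ are essential: the quadratic leading part $\langle x,\xi\rangle + \tfrac{t}{2} \langle \xi, I_d^k \xi\rangle$ is scale-invariant under parabolic rescaling, while the perturbation $\mathcal{E}$ is further dampened by its second-order vanishing at the origin, so the rescaled $c_{par}$-bounds are preserved and $A'$ is of the same order as $A$. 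The margin condition $M_A)$ in turn ensures that the amplitude support stays safely inside the admissible region through the rescaling, preventing boundary effects from corrupting the iteration.
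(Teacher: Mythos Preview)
Your two-stage scheme---freeze the phase on small balls, apply constant-coefficient decoupling, then parabolically rescale each cap and invoke the hypothesis at the smaller scale---is exactly the paper's argument. But two choices you make prevent the induction from closing as written.

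First, the intermediate scale. You take $K \approx R^{1-\delta}$, yet the freezing step needs the Taylor remainder $K^2/\lambda$ to be not merely bounded but polynomially small, i.e.\ $K \leq \lambda^{1/2-\delta'}$ for some positive $\delta'$ (this is what produces the $\lambda^{-\delta N/2}$ error in the approximation lemma). Since $R$ ranges up to $\lambda^{1-\varepsilon/d}$, a power of $R$ may well exceed $\lambda^{1/2}$ unless $\delta$ is pinned down carefully, which you do not do. The paper instead takes $K$ to be a \emph{fixed} large constant depending only on $\varepsilon$ and inducts on the radius $R$ (uniformly in all $\lambda$ with $R \leq \lambda^{1-\varepsilon/d}$), so that $K \leq \lambda^{1/4}$ holds automatically once $\lambda$ is past the base case.

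Second, the closing arithmetic. The induction hypothesis is $\mathfrak{D}^\varepsilon_{1,k} \leq \overline{C}_\varepsilon$, so applying it at the rescaled radius contributes $(R/K)^{\alpha(p,k)+\varepsilon}$, not the $(R/K)^{\alpha(p,k)+\varepsilon/2}$ you wrote. Combined with the constant-coefficient factor $K^{\alpha(p,k)+\varepsilon/2}$ this gives
\[
C_\varepsilon\,\overline{C}_\varepsilon\, K^{-\varepsilon/2} R^{\alpha(p,k)+\varepsilon},
\]
and the induction closes precisely because $K$ was chosen large enough (depending on $C_\varepsilon$ and $\varepsilon$) that $C_\varepsilon K^{-\varepsilon/2} \leq 1$. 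The mismatch between $\varepsilon/2$ and $\varepsilon$ in the two stages is not a nuisance but the mechanism; with your exponents the product is $C_\varepsilon\,\overline{C}_\varepsilon\, R^{\alpha+\varepsilon/2}$, and the extra implicit constant from the constant-coefficient theorem is never absorbed.

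Finally, the stability of the type-$1$ conditions under parabolic rescaling is not merely a consequence of the normal form and second-order vanishing of $\mathcal{E}$; in the paper it occupies a separate lemma and in particular requires controlling $\partial_{tt}^2 \Phi_{\xi_0}$ through the implicit relation defining $\Phi$. Your last paragraph gestures at this but does not supply the bound.
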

In fact, we observe that for any $1 \leq \rho \leq R$ and $\rho^{-1/2}$-ball $\theta$ one may write
\begin{equation*}
T^\lambda f_\theta = \sum_{\substack{\sigma \cap \tilde{\theta} \neq \emptyset, \\ \sigma:R^{-1/2}-ball}} T^\lambda f_\sigma,
\end{equation*}
where $\tilde{\theta}$ denotes the intersection of $\text{supp}(f)$ and $\theta$. We compute using Minkowski's and Cauchy-Schwarz inequality that for any weight $w$ one has
\begin{equation}
\label{eq:relationDecouplingScales}
\begin{split}
\Vert T^\lambda f \Vert_{L^{p,\rho}_{dec}(w)} &= ( \sum_{\theta: \rho^{-1/2}-ball} \Vert T^\lambda f_\theta \Vert^2_{L^p(w)} )^{1/2} = ( \sum_{\theta: \rho^{-1/2}-ball} ( \sum_{\substack{\sigma: R^{-1/2}-ball, \\ \sigma \cap \tilde{\theta} \neq \emptyset}} \Vert T^\lambda f_\theta \Vert )^2 )^{1/2} \\
&\leq ( \sum_{\theta: \rho^{-1/2}-ball} (R/\rho)^{d/2} \sum_{\substack{\sigma: R^{-1/2}-ball, \\ \sigma \cap \tilde{\theta} \neq \emptyset}} \Vert T^\lambda f_\sigma \Vert_{L^p}^2 )^{1/2} \lesssim (R/\rho)^{\frac{d}{4}} \Vert T^\lambda f \Vert_{L^{p,R}_{dec}(w)}.
\end{split}
\end{equation}
Since $\Vert T^\lambda f \Vert_{L^p(B_R)} \lesssim \Vert T^\lambda f \Vert_{L^{p,1}_{dec}(B_R)}$, from taking $\rho = 1$ in the above display it follows that
\begin{equation}
\label{eq:finitenessInductionConstant}
\mathfrak{D}_{A,k}^\varepsilon(\lambda;R) \lesssim R^{\frac{d}{4}-\alpha(p,k)-\varepsilon},
\end{equation}
which yields finiteness of $\mathfrak{D}^\varepsilon$. Moreover, we can reduce to
\begin{equation}
\label{eq:reductionInductionConstant}
\mathfrak{D}_{A,k}^\varepsilon(\lambda;\lambda^{1-\frac{\varepsilon}{d}}) \leq C_\varepsilon.
\end{equation}

Indeed, the support conditions of the amplitude $a$ imply that the support of $T^\lambda f$ is always contained in $B(0,\lambda)$. We cover $B(0,\lambda)$ by an essentially disjoint family of $\lambda^{1 - \frac{\varepsilon}{d}}$-balls
\begin{equation*}
\Vert T^\lambda f \Vert_{L^p(B(0,\lambda))}^p \leq \sum_{B: \lambda^{1-\frac{\varepsilon}{d}}-balls} \Vert T^\lambda f \Vert^p_{L^p(B)},
\end{equation*}
 and using Minkowski's inequality we find
\begin{equation*}
\begin{split}
\Vert T^\lambda f \Vert_{L^p(B)} &\lesssim \mathfrak{D}^\varepsilon_{A,k}(\lambda;\lambda^{1-\frac{\varepsilon}{d}}) \lambda^{\frac{\varepsilon}{4}} (\lambda^{1-\frac{\varepsilon}{d}})^{\alpha(p,k)+\varepsilon}  \big( \sum_{\theta: (\lambda^{1-\frac{\varepsilon}{d}})^{-1/2}-balls} \Vert T^\lambda f_\theta \Vert^2_{L^p(w_B)} \big)^{1/2} \\
&\qquad + (\lambda^{1-\varepsilon/d})^{2(d+1)} \lambda^{-\varepsilon N/8} \Vert f \Vert_2 \\
&\lesssim \mathfrak{D}^{\varepsilon}_{A,k} (\lambda;\lambda^{1-\frac{\varepsilon}{d}}) (\lambda^{1-\frac{\varepsilon}{d}})^{\alpha(p,k)+\varepsilon} \lambda^{\frac{\varepsilon}{4}} \big( \sum_{\theta:\lambda^{-1/2}-balls} \Vert T^\lambda f_\theta \Vert^2_{L^p(w_{B(0,\lambda)})} \big)^{1/2} \\
&\qquad + \lambda^{2(d+1)-\frac{2 \varepsilon (d+1)}{d}} \lambda^{- \varepsilon N/8} \Vert f \Vert_2.
\end{split}
\end{equation*}

For $N$ large enough in dependence of $\varepsilon$, $d$ and $M$ we find \eqref{eq:variableCoefficientl2Decoupling} to hold from \eqref{eq:relationDecouplingScales} for normalized data.
\subsubsection{Rescaling of variable-coefficient phase functions}
We record the following trivial rescaling allowing us to reduce data of type $A$ to data of type $1$:
\begin{lemma}
\label{lem:trivialRescaling}
 For any $A \geq 1$ we find the following estimate to hold:
\begin{equation}
\label{eq:trivialReduction}
\mathfrak{D}^\varepsilon_{A,k}(\lambda;R) \lesssim_A \mathfrak{D}_{1,k}^\varepsilon(\lambda/A;R/A).
\end{equation}
\end{lemma}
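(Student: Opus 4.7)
The plan is to reduce general type $(A,k)$ data to normalized type $(1,k)$ data by a spatial localization followed by an affine rescaling. First I would introduce a smooth partition of unity $\{\chi_j\}$ on the position support $\mathrm{supp}\, a_1 \subseteq Z$, subordinate to a cover of $Z$ by cubes of sidelength comparable to $1/A$; since $Z$ has diameter $O(1)$, there are $O(A^{d+1})$ pieces, and the triangle inequality in $L^p$ reduces the estimate to bounds for each localized operator $T^\lambda_j$ built from the amplitude $\chi_j\cdot a$, at a cost of an overall constant depending only on $A$. For each piece, centered at $z_j \in Z$, I would then apply the affine change of variables $\tilde z = A(z-\lambda z_j)/\lambda$ in tandem with the frequency rescaling $\tilde\lambda := \lambda/A$; since $z/\lambda = z_j + \tilde z/A$ one obtains
\begin{equation*}
\lambda\phi(z/\lambda;\xi) \;=\; \tilde\lambda\,\bigl[A\,\phi(z_j + \tilde z/A;\xi)\bigr],
\end{equation*}
so the natural choice
\begin{equation*}
\tilde\phi^{(j)}(\tilde z;\xi) := A\,\phi(z_j + \tilde z/A;\xi), \qquad \tilde a^{(j)}(\tilde z;\xi) := \chi_j(z_j + \tilde z/A)\,a(z_j + \tilde z/A;\xi)
\end{equation*}
identifies $T^\lambda_j f$, viewed as a function of $z - \lambda z_j$, with the rescaled operator $\tilde T^{\tilde\lambda}_j f$ constructed from $(\tilde\phi^{(j)}, \tilde a^{(j)})$ at scale $\tilde\lambda$.

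The decisive computation is to verify that $(\tilde\phi^{(j)}, \tilde a^{(j)})$ is of type $(1,k)$. Each $\tilde z$-derivative of $\tilde\phi^{(j)}$ produces a factor $1/A$ via the chain rule, which is cancelled exactly once by the prefactor $A$: hence first-order mixed derivatives transform trivially,
\begin{equation*}
\partial^2_{\tilde x \xi}\tilde\phi^{(j)} = \partial^2_{x\xi}\phi, \qquad \partial_{\tilde t}\partial^2_{\xi\xi}\tilde\phi^{(j)} = \partial_t\partial^2_{\xi\xi}\phi,
\end{equation*}
and $H1_1, H2^1_{[k]}, D1^1_1, D1^2_1$ pass directly to $\tilde\phi^{(j)}$. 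Second-order pure $\tilde z$-derivatives however pick up an additional uncancelled $1/A$, yielding the crucial improvement
\begin{equation*}
\|\partial^2_{\tilde z}\partial_\xi^\beta \tilde\phi^{(j)}\|_\infty = \frac{1}{A}\|\partial^2_z\partial_\xi^\beta \phi\|_\infty \;\leq\; \frac{c_{par}}{100\,n},
\end{equation*}
upgrading $D2_A$ to $D2_1$. The margin condition $M_1$ holds automatically once the partition is fine enough that, after the $A$-dilation, the support of $\tilde a^{(j)}_1$ lies in a small portion of $B^{d+1}(0,1)$ well away from its boundary.

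Having exhibited $\tilde T^{\tilde\lambda}_j$ as a type $(1,k)$ oscillatory integral at scale $\tilde\lambda = \lambda/A$ on an image ball of radius $R/A$ (the original $B_R$ in $z$-coordinates being dilated by $A/\lambda$ under $z\mapsto \tilde z$, which together with $\tilde\lambda = \lambda/A$ gives a ball of radius $R/A$ in $\tilde z$), one invokes the inductive constant $\mathfrak{D}^\varepsilon_{1,k}(\lambda/A;\,R/A)$ on each piece, undoes the change of variables, and sums over the $O(A^{d+1})$ pieces via the triangle inequality. The main technical obstacle is the bookkeeping of the decoupling norm under the combined partition and rescaling: one has to confirm that the $R^{-1/2}$-decoupling in $\xi$ on the $z$-side is equivalent to the $(R/A)^{-1/2}$-decoupling on the $\tilde z$-side after accounting for the Jacobian of the dilation, and that summation over the spatial partition introduces only an $A$-dependent loss. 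Both of these are handled by combining the scale-change inequality \eqref{eq:relationDecouplingScales} with the essential disjointness of the partition $\{\chi_j\}$, so that no $\lambda$- or $R$-dependent factors leak into the final constant.
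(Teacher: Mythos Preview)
Your overall strategy---partition the amplitude into $O(A^{d+1})$ pieces of abstract diameter $\sim 1/A$, translate each to the origin, and rescale the phase by $A$---is the same as the paper's, and your verification that $(\tilde\phi^{(j)},\tilde a^{(j)})$ satisfies the type-$(1,k)$ conditions is correct and matches the paper's computation line for line.

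There is, however, one genuine slip in the bookkeeping. Your claim that ``the original $B_R$ \dots\ gives a ball of radius $R/A$ in $\tilde z$'' is not right. The physical variable for $\tilde T^{\tilde\lambda}_j$ (in the sense of \eqref{eq:rescaledOscillatoryIntegralOperator}) is $w$ with $w/\tilde\lambda = \tilde z$, and unwinding your own identity $z/\lambda = z_j + \tilde z/A$ gives $w = z - \lambda z_j$: a pure translation. Thus $B_R$ becomes a ball of radius $R$, not $R/A$, in the $w$-variable. To invoke $\mathfrak{D}^\varepsilon_{1,k}(\lambda/A;R/A)$ you must therefore, in addition to the amplitude partition, cover this translated $R$-ball by $R/A$-balls. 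Since the support of $\tilde T^{\tilde\lambda}_j f$ sits inside $B(0,c\lambda/A)$, this costs another factor $O(A^{d+1})$, which is harmless for the $\lesssim_A$ conclusion; this is exactly the second step the paper singles out (``Covering $B(0,R)$ with $R/A$-balls yields another factor of $\mathcal O(A^{d+1})$''). After this covering, the passage from $(R/A)^{-1/2}$-caps back to $R^{-1/2}$-caps on the right-hand side proceeds via \eqref{eq:relationDecouplingScales} as you indicate. With this correction your argument is complete and coincides with the paper's.
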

\begin{proof}
Let $(\phi,a)$ be a datum in $A$-normal form. We define $\tilde{\phi}(z;\xi) = A \phi(z/A;\xi)$ and amplitude $\tilde{a}(z;\xi) = a(z/A;\xi)$ and observe that $T^\lambda f = \tilde{T}^{\lambda/A} f$. Note the equivalent behaviour of $\phi$ and $\tilde{\phi}$ under one positional derivative. Hence, we find $(\tilde{\phi},\tilde{a})$ to satisfy $H1_1)$, $H2_{[k]}^1)$, $D1^1_1)$, $D1^2_1)$, and the second derivative amounts to an additional factor of $1/A$. Hence, we find $D2_1)$ to hold. The new margin of the new amplitude $\tilde{a}$ has been increased to size $1/4$ and we find $M_{1})$ to hold. This step might require the additional argument of decomposing the amplitude function through a partition of unity and translating each piece, if necessary, to adjust to the enlarged support $A \text{supp}(a)$. This involves a sum over $\mathcal{O}(A^{d+1})$ operators where each is associated to type $1$-data.

Covering $B(0,R)$ with $R/A$-balls yields another factor of $\mathcal{O}(A^{d+1})$, but these pieces can be bounded by $\mathfrak{D}^\varepsilon_{1,k}(\lambda/A;R/A),$ and the proof is complete. Moreover, the form of the error term allows us to summarize the sum over $\mathcal{O}(A^{d+1})$ error terms again as error term.
\end{proof}
Next, we show the following stability result for normalized phase functions under parabolic rescaling\footnote{Here, the term parabolic refers to the rescaling of time by a quadratic factor compared to space and is not restricted to phase functions related to elliptic (parabolic) surfaces.}. This allows us to properly run an induction argument.
\begin{lemma}{[Parabolic rescaling for hyperbolic phase functions]}
\label{lem:parabolicRescalingDecoupling}
Let $2 \leq p < \infty$, $1 \leq \rho \leq R \leq \lambda$, $0 \leq k \leq d/2$ and $\alpha(p,k)$ like in \eqref{eq:decouplingParameter}. Suppose that $(\phi,a)$ satisfies $H1^\prime)$ and $H2^\prime_{[k]})$ and let $T^\lambda$ be the associated oscillatory integral operator. If $g$ is supported in a $\rho^{-1}$-ball and $\rho$ is sufficiently large, then there exists a constant $\overline{C}(\phi) \geq 1$ such that
\begin{equation*}
\begin{split}
\Vert T^\lambda g \Vert_{L^p(w_{B_R})} &\lesssim_{\varepsilon, N,\phi} \mathfrak{D}_{1,k}^\varepsilon(\lambda/\overline{C} \rho^2, R/\overline{C} \rho^2) (R/\rho^2)^{\alpha(p,k)+\varepsilon} \Vert T^\lambda g \Vert_{L^{p,R}_{dec}(w_{B_R})} + R^{2(d+1)} (\lambda/R)^{-N/8} \Vert g \Vert_2.
\end{split}
\end{equation*}
\end{lemma}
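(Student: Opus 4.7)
The plan is to reduce the lemma to the induction hypothesis $\mathfrak{D}_{1,k}^\varepsilon$ at rescaled parameters via two successive changes of variable: first in the frequency variable $\xi$, centered at the center $\xi_0$ of the $\rho^{-1}$-ball containing $\mathrm{supp}(g)$, and then in space-time $(t,x)$, in order to straighten the characteristic field of $\xi_0$ and apply a parabolic rescaling. The constant $\overline{C}(\phi)$ will absorb both the conditioning of the straightening diffeomorphism and a linear change of variables in $\eta$ needed to normalize the quadratic form $\partial_\xi^2 \phi(0,0;\xi_0)$ to the model form $\tfrac12\langle\cdot,I_d^k\cdot\rangle$, while preserving its signature $k$ by $H2^\prime_{[k]})$.

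Concretely, I would first substitute $\xi = \xi_0 + \rho^{-1}\eta$ and set $\tilde g(\eta) := g(\xi_0 + \rho^{-1}\eta)$, so that $\mathrm{supp}(\tilde g) \subseteq B^d(0,1)$ and $\|\tilde g\|_{L^2} = \rho^{d/2}\|g\|_{L^2}$; then Taylor-expand the phase in $\xi$ about $\xi_0$, keeping the $\eta$-linear and $\eta$-quadratic pieces explicit and controlling the remainder using $D1^2_1)$ and the smoothness of $\phi$. Next, invoking $H1^\prime)$ and the implicit function theorem, I would introduce the straightening $(t,x) \mapsto (t,y) = (t,\mathcal{G}(t,x;\xi_0))$ that absorbs the $\eta$-linear piece into a clean $\tilde x\cdot\eta$ term, diagonalize the quadratic form by a linear change of $\eta$, and finally perform the parabolic rescaling $(\tilde t, \tilde x) = (t/\overline{C}\rho^2, y/\overline{C}\rho)$. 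In the new coordinates the phase takes the form $\tilde x\cdot\eta + \tfrac12\tilde t\langle\eta, I_d^k\eta\rangle + O(\rho^{-1})$ in $C^{2N}$-norm, so that the rescaled pair $(\tilde\phi,\tilde a)$ is a type-$(\overline{C},k)$ datum at scales $\tilde\lambda = \lambda/\overline{C}\rho^2$ and $\tilde R = R/\overline{C}\rho^2$. Applying the definition of $\mathfrak{D}_{\overline{C},k}^\varepsilon(\tilde\lambda;\tilde R)$, then Lemma \ref{lem:trivialRescaling} to descend to $\mathfrak{D}_{1,k}^\varepsilon(\lambda/\overline{C}\rho^2; R/\overline{C}\rho^2)$, and tracking the Jacobians, the two factors $\rho^{(d+2)/p - d}$ from the space-time dilation and $\rho^{d - (d+2)/p}$ from the identification of decoupled pieces cancel exactly, the cancellation being aided by the observation that $\tilde R^{-1/2}$-balls in $\eta$ correspond to $R^{-1/2}$-balls in $\xi$ through $\rho^{-1}\tilde R^{-1/2} \sim R^{-1/2}$.

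The main obstacle will be verifying that the rescaled pair $(\tilde\phi,\tilde a)$ genuinely satisfies the full set of normalization conditions $H1_1)$, $H2^1_{[k]})$, $D1^1_1)$, $D1^2_1)$, $D2_{\overline{C}})$, $M_{\overline{C}})$ uniformly in $\rho$ and $\lambda$, with $\overline{C}$ depending only on finitely many $C^N$-seminorms of $\phi$. The essential quantitative observation is the asymmetry that each $\eta$-derivative of $\tilde\phi$ gains a factor $\rho^{-1}$ whereas each $(\tilde t,\tilde x)$-derivative contributes only a bounded factor, providing precisely the smallness required by $D1^1_1)$, $D1^2_1)$, and $D2_{\overline{C}})$; the conditions $H1_1)$ and $H2^1_{[k]})$ reflect the leading-order structure after diagonalization. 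The error term $R^{2(d+1)}(\lambda/R)^{-N/8}\|g\|_{L^2}$ arises by transporting the tail from the definition of $\mathfrak{D}_{\overline{C},k}^\varepsilon$ back through the changes of variables, using $\tilde\lambda/\tilde R = \lambda/R$ together with $\rho \leq R^{1/2}$ (the regime in which the estimate is nontrivial) to absorb the residual sub-polynomial factors of $\rho$ into the decay $(\lambda/R)^{-N/8}$ upon choosing $N$ sufficiently large.
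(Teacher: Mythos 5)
Your proposal follows essentially the same route as the paper's proof: recentering in frequency at $\xi_0$ with a second-order Taylor expansion, the straightening map $\Phi_{\xi_0}$ obtained from $H1^\prime)$ via the implicit function theorem, a linear change of the frequency variable normalizing the Hessian to $I_d^k$ (preserving the signature by $H2^\prime_{[k]})$), the parabolic dilation $D_\rho$, and verification of the type-$1$ conditions through the key observation that each frequency derivative of the rescaled phase gains a factor $\rho^{-1}$ while space-time derivatives stay bounded. The Jacobian cancellation, the matching of $(R/\rho^2)^{-1/2}$-balls with $R^{-1/2}$-balls, the reduction to $\mathfrak{D}^\varepsilon_{1,k}$ via Lemma \ref{lem:trivialRescaling}, and the bookkeeping for the error term all coincide with the paper's argument.
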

\begin{proof}[Proof of Lemma \ref{lem:parabolicRescalingDecoupling} for phase functions of type $1$]
Let $\xi_0 \in B^d(0,1)$ be the centre of $\rho^{-1}$-ball where $g$ is supported. We perform the change of variables $\xi^\prime = \rho(\xi - \xi_0)$ and we compute
\begin{equation*}
T^\lambda g(z) = \int_{\R^d} e^{i \phi^\lambda(z;\xi)} a^\lambda(z;\xi) g(\xi) d\xi = \int_{\R^d} e^{i \phi^\lambda(z;\xi_0 + \rho^{-1} \xi^\prime)} a^\lambda(z;\xi_0 + \rho^{-1} \xi^\prime) \underbrace{\rho^{-d} g(\xi_0 + \rho^{-1} \xi^\prime)}_{\tilde{g}(\xi^\prime)} d \xi^\prime.
\end{equation*}
We expand $\phi$ to find
\begin{equation*}
\phi(z;\xi_0 + \xi^\prime/\rho) = \phi(z;\xi_0) + [\nabla_{\xi} \phi(z;\xi_0)] \frac{\xi^\prime}{\rho} + \rho^{-2} \int_0^1 (1-r) \langle \partial^2_{\xi \xi} \phi(z;\xi_0 + r \xi^\prime/\rho) \xi^\prime, \xi^\prime \rangle dr.
\end{equation*}

Let $\Phi_{\xi_0}(t,x) = (t,\Phi(t,x;\xi_0)); \quad \Phi^\lambda(t,x) = \lambda \Phi_{\xi_0}(t/\lambda,x/\lambda)$ and we introduce the dilations $D_\rho(t,x) = ( \rho^2 t,\rho x)$ and $D^\prime_{\rho^{-1}}(x) = \rho^{-1}x$. We find 
\begin{equation}
\label{eq:relationPhaseFunctions}
e^{i \lambda \phi(\Phi_{\xi_0}(\rho^2 t/\lambda,\rho x/\lambda);\xi_0)} T^\lambda g \circ \Phi^\lambda_{\xi_0} \circ D_\rho = \tilde{T}^{\lambda/\rho^2} \tilde{g},
\end{equation}
where
\begin{equation*}
\tilde{T}^{\lambda/\rho^2} \tilde{g}(t,x) = \int_{\R^d} e^{i \tilde{\phi}^{\lambda/\rho^2}(t,x;\xi)} \tilde{a}^{\lambda/\rho^2}(x;\xi) \tilde{g}(\xi) d \xi,
\end{equation*}
and the phase $\tilde{\phi}(t,x;\xi)$ is given by
\begin{equation*} 
\langle x, \xi \rangle + \int_0^1 (1-r) \langle \partial^2_{\xi \xi} \phi(\Phi_{\xi_0}(t,D^\prime_{\rho^{-1}} x); \xi_0 + r \xi/\rho) \xi, \xi \rangle dr,
\end{equation*}
and the amplitude $\tilde{a}(y,t;\xi) = a(\Phi_{\xi_0}(t;D^\prime_{\rho^{-1}}y); \xi_0 + \xi/\rho)$.

We verify \eqref{eq:relationPhaseFunctions}: From the definition
\begin{equation*}
(\Phi_{\xi_0}^\lambda \circ D_\rho)(t,x) = \lambda \Phi_{\xi_0}(\rho^2 t / \lambda,\rho x/\lambda)
\end{equation*}
and
\begin{equation*}
\begin{split}
&\quad \phi^\lambda(\Phi_{\xi_0}^\lambda(D_\rho(t,x)),\xi_0+\xi/\rho) = \lambda \phi(\Phi_{\xi_0}(\rho^2 t/\lambda,\rho x/\lambda);\xi_0+\xi/\rho) \\
&\rightarrow \lambda \phi(\Phi_{\xi_0}(\rho^2 t/\lambda,\rho x/\lambda);\xi_0) + \lambda [\nabla_\xi \phi(\Phi_{\xi_0}(\rho^2 t/\lambda,\rho x/\lambda);\xi_0)] \frac{\xi}{\rho} \\
&\quad + \rho^{-2} \lambda \int_0^1 (1-r) \langle \partial^2_{\xi \xi} \phi(\Phi_{\xi_0}(\rho^2 t/\lambda,\rho x/\lambda);\xi_0 + r \rho^{-1} \xi) \xi, \xi \rangle dr,
\end{split}
\end{equation*}
which proves \eqref{eq:relationPhaseFunctions}. If $\phi$ is in normal form, then we can also write
\begin{equation}
\label{eq:normalFormPhiII}
\tilde{\phi}(t,x;\xi) = \langle x, \xi \rangle + \frac{t |\xi|^2}{2} + \int_0^1 (1-r) \langle \partial^2_{\xi \xi} \mathcal{E}(\Phi_{\xi_0}(t,D^\prime_{\rho^{-1}} x), {\xi_0} + r\xi /\rho) \xi, \xi \rangle,
 \end{equation}
 and with $\tilde{g}$ being supported in $B^d(0,1)$ we can assume that $|\xi| \leq 1$.
 
 A change of spatial variables gives 
\begin{equation*}
\Vert T^\lambda g \Vert_{L^p(B_R)} \lesssim_{\phi} \rho^{\frac{d+2}{p}} \Vert \tilde{T}^{\lambda/\rho^2} \tilde{g} \Vert_{L^p((\Phi_{\xi_0}^\lambda \circ D_\rho)^{-1} (B_R))},
\end{equation*}
where the implicit constant stems from the Jacobian of $\Phi_{\xi_0}$, which is controlled by property $D1_{1})$. Note that the implicit constant can be chosen constant for data of type $1$ provided that $c_{par}>0$ is chosen small enough. We cover $(\Phi^\lambda_{\xi_0} \circ D_\rho)^{-1} (B_R)$ with essentially disjoint $R/\rho^2$-balls, $B_{R/\rho^2} \in \mathcal{B}_{R/\rho^2}$ and find
\begin{equation*}
\Vert T^\lambda g \Vert_{L^p(B_R)} \lesssim_\phi \rho^{(d+2)/p}
 \big( \sum_{B_{R/\rho^2} \in \mathcal{B}_{R/\rho^2}} \Vert \tilde{T}^{\lambda/\rho^2} \tilde{g} \Vert^p_{L^p(B_{R/\rho^2})} \big)^{1/p}.
\end{equation*}

We argue below that
\begin{equation}
\label{eq:smallScaleDecoupling}
\begin{split}
\Vert \tilde{T}^{\lambda/\rho^2} \tilde{g} \Vert_{L^p(B_{R/\rho^2})} &\lesssim_{\varepsilon,N} \mathfrak{D}_{1,k}^\varepsilon(\lambda/\overline{C} \rho^2, R/\overline{C} \rho^2) (R/\rho^2)^{\alpha(p,k)+\varepsilon} \Vert \tilde{T}^{\lambda/\rho^2} \tilde{g} \Vert_{L^{p,R/\rho^2}_{dec}(w_{B_{R/\rho^2}})} \\
&\quad + (R/\rho^2)^{2(d+1)} (\lambda/R)^{-N/8} \Vert g \Vert_{L^2(\R^d)}
\end{split}
\end{equation}
holds for each $B_{R/\rho^2} \in \mathcal{B}_{R/\rho^2}$ and some $\overline{C} \geq 1$.

If $(\tilde{\phi},\tilde{a})$ was a type-$1$ datum, this would be a consequence of the definitions.
First, we show how to conclude the proof with \eqref{eq:smallScaleDecoupling}: we can write
\begin{equation*}
\cup_{B_{R/\rho^2} \in \mathcal{B}_{R/\rho^2}} B_{R/\rho^2} \subseteq (\Phi^\lambda_{\xi_0} \circ D_\rho)^{-1} (B_{C_{\phi} R}) = C_{R^\prime},
\end{equation*}
where $B_{C_\phi R}$ is a ball concentric to $B_R$, but with enlarged radius $C_\phi R$ for some $C_\phi \geq 1$ because $\Phi_{\xi_0}$ is a diffeomorphism.\\
Hence, we find from summing the $p$th power on both sides over $R/\rho^2$ balls and inverting the change of variables
\begin{equation*}
\begin{split}
&\quad \mathfrak{D}^\varepsilon_{1,k}(\lambda/\overline{C} \rho^2, R/\overline{C} \rho^2) (R/\rho^2)^{\alpha(p,k)+\varepsilon} \big( \sum_{B_{R/\rho^2} \in \mathcal{B}} \Vert \tilde{T}^{\lambda/\rho^2} \tilde{g} \Vert^p_{L^{p,R/\rho^2}_{dec}(w_{B_{R/\rho^2}})} \big)^{1/p} \\
&\leq \mathfrak{D}^\varepsilon_{1,k}(\lambda/\overline{C} \rho^2,R/\overline{C} \rho^2) (R/\rho^2)^{\alpha(p,k)+\varepsilon} \Vert \tilde{T}^{\lambda/\rho^2} \tilde{g} \Vert_{L^{p,R/\rho^2}_{dec}(w_{C_{R^\prime}})}.
\end{split}
\end{equation*}

Inverting the change of coordinates yields
\begin{equation*}
\begin{split} 
\Vert T^\lambda g \Vert_{L^p(B_R)} &\lesssim_{\varepsilon,N,\phi} \mathfrak{D}_{1,k}^\varepsilon(\lambda/\overline{C} \rho^2, R/\overline{C} \rho^2) (R/\rho^2)^{\alpha(p,k)+ \varepsilon} \\
&( \sum_{\tilde{\theta}: (R/\rho^2)^{-1/2}-ball} \Vert T^\lambda g_\theta \Vert^p_{L^p(w_{B_R})} )^{1/p} + R^{2(n+1)} (\lambda/R)^{-N/8} \Vert g \Vert_{L^2(\R^n)}.
\end{split}
\end{equation*}

It is straight-forward to check that the $\theta$, which are the images of $\tilde{\theta}$ under the mapping $\xi \mapsto \rho(\xi - {\xi_0})$, which inverts the change of variables in frequency space, form a cover of the $\text{supp} g$ with $R^{-1/2}$-balls. Note how the error term compensates the decomposition into $R/\rho^2$ balls. In fact, any $R/\rho^2$-ball contributes with $(R/\rho^2)^{2d}$ and there are roughly $\rho^{2(d+1)}$ $R/\rho^2$-balls.

It remains to prove \eqref{eq:smallScaleDecoupling} for each $B_{R/\rho^2} \in \mathcal{B}_{R/\rho^2}$. For this purpose record the following representations of $\tilde{\phi}_L = \tilde{\phi}(t,(L^{-1})^t x;L\xi)$:
\begin{equation}
\label{eq:representationRescaledPhaseFunctionI}
\tilde{\phi}_L(t,x;\xi) = \langle x, \xi \rangle + \int_0^1 (1-r) \langle \partial^2_{\xi \xi} \phi(\Phi_{\xi_0}(t,D^\prime_{\rho^{-1}} \circ L^{-1} x), {\xi_0} + L r \xi/\rho) L \xi, L \xi) dr,
\end{equation}
and from Taylor expansion we find (up to an irrelevant phase factor)
\begin{equation}
\label{eq:representationRescaledPhaseFunctionII}
\tilde{\phi}_L(t,x;\xi) = \rho^2 \phi(t,\Phi_{\xi_0}(t,D^\prime_{\rho^{-1}} \circ L^{-1} x);{\xi_0} + L \xi/\rho).
\end{equation}
$\tilde{\phi}_L$ is an affinely changed version of $\tilde{\phi}$ for some invertible $L$, so that $\partial_t \partial^2_{\xi \xi} \tilde{\phi}_L(0,0;0) = I^k_n$. We perceive $L=diag(\sqrt{\mu_1},\ldots,\sqrt{\mu_n}) \cdot R$, where $R$ is a rotation and $\mu_1,\ldots,\mu_n$ are the eigenvalues of $\partial_t \partial^2_{\xi \xi} \tilde{\phi}$ which is already close to $I_n^k$ quantified by property $H2^{[k]}_1)$.

We verify $H1_{1})$ for $\tilde{\phi}$: Taking an $x$ derivative of the integral term leads to an expression of the kind $\partial_x \partial_{\xi \xi}^2  \phi \cdot \partial_x \Phi_{\xi_0} \cdot \rho^{-1}$.\\
$\partial_x \partial^2_{\xi \xi} \phi$ is controlled by property $D_{1}^1)$ of $\phi$. From the definition of $\Phi_{\xi_0}$ and the chain rule we find $\partial_x \Phi_{\xi_0} = (\partial_x \partial_\xi \phi)^{-1}$. Since $| \partial^2_{x \xi} \phi - I^k_n | \leq c_{par}$, we have $|\partial_x \Phi_{\xi_0}| \leq 2$ and we find the total expression to be of order $c_{par}/\rho$. Note that taking a frequency derivative does not magnify the size.\\
Likewise we verify $D1_{1}^1)$ for $|\beta| = 2$. For higher derivatives in $\xi$ we can argue with the representation \eqref{eq:representationRescaledPhaseFunctionII} and observe that the bounds for large $\rho$ become smaller and smaller since any derivative in $\xi$ gives rise to a factor of $\rho^{-1}$. In this way one checks the validity of $D2_{1})$.

We check $H2^{[k]}_{1})$: For this purpose we write
\begin{equation*}
\partial_t \partial_{\xi}^2 \tilde{\phi}_L(t,x;\xi) - I^k_d = \partial_t \partial_{\xi}^2 \tilde{\phi}_L(t,x;\xi) - \partial_t \partial^2_{\xi \xi} \tilde{\phi}_L(0,0;0)
\end{equation*}
and use the fundamental theorem of calculus. For an additional $\xi$ derivative we find the contribution to be of size $\mathcal{O}(c_{par} \rho^{-1})$. For positional derivatives we use property $D2_{1})$ of $\phi$ to find this contribution to be also much smaller than $c_{par},$ and thus the claim follows.

The only cases of $D2_{1})$ which require additional reasoning to the above arguments are when there are two time derivatives and only one or two frequency derivatives. Else, the smallness is immediate from \eqref{eq:representationRescaledPhaseFunctionII}. In the case of two time derivatives and few frequency derivatives we have to consider combinations $\partial^2_{tt} \partial_{\xi \xi}^2 \mathcal{E}$, $\partial_t \Phi_{\xi_0}$ and $\partial_{tt}^2 \Phi_{\xi_0}$. $\partial_{tt}^2 \partial_{\xi \xi}^2 \phi$ and higher frequency derivatives are controlled by property $D2_{1})$ of $\phi$ and above we have seen that $\partial_t \Phi_{\xi_0}$ is controlled quantitatively through \eqref{eq:almostLinearGroupVelocity} of $\phi$. The control over $\partial_{tt}^2 \Phi_{\xi_0}$ follows from considering one further time derivative:
\begin{equation}
\label{eq:secondDerivativePhi}
\begin{split}
&\quad \partial_{tt} \partial_\xi \phi^\lambda(\Phi^\lambda(t,x;\xi_0);\xi_0) + \partial_t \partial_{x \xi}^2 \phi^\lambda(\Phi^\lambda(t,x;\xi_0)) \partial_t \Phi^\lambda \\
&+ \partial_t \partial^2_{x \xi} \phi^\lambda(\Phi^\lambda(t,x;\xi_0);\xi_0) \partial_t \Phi^\lambda(t,x;\xi_0) + \partial^2_{xx} \partial_\xi \phi^\lambda(\Phi^\lambda(t,x;\xi_0);\xi_0)(\partial_t \Phi^\lambda)^2 \\
&+ \partial^2_{x \xi} \phi^\lambda \partial^2_{tt} \Phi^\lambda = 0.
\end{split}
\end{equation}

Hence, we find $|\partial_{tt}^2 \partial_\xi \tilde{\phi}_L|$, $|\partial_{tt}^2 \partial_{\xi \xi}^2 \tilde{\phi}_L| \leq C$ independent of $\phi$ with dependence only on the parameters in the definition of type $1$ data. After invoking Lemma \ref{lem:trivialRescaling} with some constant independent of $\phi$ provided that $\phi$ is a datum of type $1$, the proof is complete.
\end{proof}
Finally, we deal with the case of a general phase function. The proof is essentially a reprise of the proof of Lemma \ref{lem:parabolicRescalingDecoupling}. However, the implicit constants are now allowed to depend on $\phi$, and since we are not dealing with a normalized datum from the beginning, the constants may become arbitrarily large.
\begin{proof}
First, we use the trivial rescaling from Lemma \ref{lem:trivialRescaling} $\phi \rightarrow \phi^A = A \phi(z/A,\xi)$ to ensure that
\begin{equation*}
\Vert \partial_z^2 \partial_\xi^\beta \phi \Vert_{L^\infty} \leq \frac{c_{par}}{100 d A} \; \text{ for } |\beta| =1,2.
\end{equation*}
Later, we shall see how to choose $A=A(\phi)$. Next, we break the support of $g$ into $\rho^{-1}$-balls, and again we will choose $\rho \geq 1$ later in dependence of $\phi$.

We carry out the changes of coordinates from the proof of Lemma \ref{lem:parabolicRescalingDecoupling} and again arrive at the representations
\begin{align}
\label{eq:rescaledRepresentationPhaseI}
\tilde{\phi}^A(t,x;\xi) &= \langle x, \xi \rangle + \int_0^1 (1-r) \langle \partial_{\xi \xi}^2 \phi^A(\Phi^A_{\xi_0}(t,D^\prime_{\rho^{-1}} x);\xi_0+r\xi/\rho) \xi, \xi \rangle dr, \\
\label{eq:rescaledRepresentationPhaseII}
\tilde{\phi}^A(t,x;\xi) &= \rho^2 \phi^A(\Phi^A_{\xi_0}(t,D^\prime_{\rho^{-1}} x);\xi_0 + \xi/\rho),
\end{align}
and we define $\tilde{\phi}^A_L$ analogous to the proof of Lemma \ref{lem:parabolicRescalingDecoupling}. We check $H1_1)$ from \eqref{eq:rescaledRepresentationPhaseI} which shows that
\begin{equation}
\partial^2_{x \xi} \tilde{\phi}^A_L = I_d + \mathcal{O}_\varphi(\rho^{-1}).
\end{equation} 
We also find $\Vert \partial_{x_k} \partial^2_{\xi \xi} \phi \Vert_{L^\infty} = \mathcal{O}_\phi(\rho^{-1})$ also follows from \eqref{eq:rescaledRepresentationPhaseII}. Moreover, for higher order derivatives in $\xi$ we get additional factors of $\rho^{-1}$ which proves property $D1^1_1)$.\\
Likewise, we verify $D1^2_1)$ for sufficiently large $\rho$.

For the proof of $H2^{[k]}_1)$ we write again
\begin{equation}
\partial_t \partial^2_{\xi \xi} \tilde{\phi}^A_L - I^k_d = \partial_t \partial_{\xi \xi}^2 \tilde{\phi}^A_L(t,x;\xi) - \partial_t \partial^2_{\xi \xi } \tilde{\phi}^A_L(0,0;0)
\end{equation}
and estimate the difference deploying the fundamental theorem of calculus. The above arguments already yield $\partial_t \partial^3_{\xi \xi \xi} \tilde{\phi}^A_L = \mathcal{O}_\phi(\rho^{-1})$, for positional derivatives we choose $A=A(\phi)$ large enough, so that $\partial_t \partial_z \partial^2_{\xi \xi} \tilde{\phi}^A_L \leq \frac{c_{par}}{100d}$ and we can also control this contribution. Note that here we also need $|\partial^2_{tt} \Phi^\lambda| = \mathcal{O}_\phi(A^{-1})$ which follows from \eqref{eq:secondDerivativePhi}.

We check $D2_1)$ like in the proof of Lemma \ref{lem:parabolicRescalingDecoupling} after choosing $A=A(\phi)$ sufficiently large.
\end{proof}
\subsubsection{Approximation by extension operators}
Let $(\phi,a)$ be a datum of type $1$ giving rise to the oscillatory integral operator $T^\lambda$ and recall that we assume the amplitude function to be of product type: $a(z;\xi) = a_1(z) a_2(\xi)$. Further, recall that
\begin{equation*}
\xi \mapsto (\nabla_{x,t} \phi^\lambda)(\overline{z};\Psi^\lambda(\overline{z};\xi))
\end{equation*}
is a graph parametrisation of a hypersurface $\Sigma_{\overline{z}}$. Thus, we have
\begin{equation}
\label{eq:parametrisationEllipticHypersurface}
\langle z, (\nabla_{x,t} \phi^\lambda(\overline{z};\Psi^\lambda(\overline{z};\xi)) \rangle = \langle x, \xi \rangle + t h_{\overline{z}}(\xi)
\end{equation}
for all $z = (x,t) \in \R^{d+1}$ with $z/\lambda \in Z$ where $h_{\overline{z}}(\xi) = (\partial_t \phi^\lambda(\overline{z};\Psi^\lambda(\overline{z};\xi)))$.

Moreover, from the definition of $\Psi^\lambda$ we have
\begin{equation}
\label{eq:derivativesPsi}
\begin{split}
\xi &= \partial_x \phi^\lambda(\overline{z};\Psi^\lambda(\overline{z};\xi)), \\
I_d &= \partial^2_{x \xi} \phi^\lambda(\overline{z};\Psi^\lambda(\overline{z};\xi)) (\partial_\xi \Psi^\lambda(\overline{z};\xi)), \\
0 &= \partial^3_{x \xi \xi} \phi(\overline{z};\Psi^\lambda(\xi)) (\partial_\xi \Psi^\lambda(\overline{z};\xi))^2 + \partial^2_{x \xi} \phi^\lambda(\overline{z};\Psi^\lambda(\xi)) \partial_{\xi \xi}^2 \Psi^\lambda(\overline{z};\xi).
\end{split}
\end{equation}
And consequently, we find for $1$-normalized data 
\begin{equation}
\label{eq:propertiesPsiTypeI}
\begin{split}
\partial_\xi \Psi^\lambda(\overline{z};\xi) &\sim I_d, \\
|\partial^2_{\xi \xi} \Psi^\lambda(\overline{z};\xi)| &\ll 1.
\end{split}
\end{equation}

Let $E_{\overline{z}}$ denote the extension operator associated to $\Sigma_{\overline{z}}$ given by
\begin{equation*}
E_{\overline{z}} g(x,t) = \int_{\R^n} e^{i(\langle x, \xi \rangle + t h_{\overline{z}}(\xi))} a_{\overline{z}}(\xi) g(\xi) d\xi,
\end{equation*}
where $a_{\overline{z}}(\xi) = a_2 \circ \Psi^\lambda(\overline{z};\xi) |\det \partial_\xi \Psi^\lambda(\overline{z};\xi) |$.\\
We shall see that on small spatial scales $T^\lambda$ is effectively approximated by $E_{\overline{z}}$ and vice versa. We record the following consequence of dealing with $1$-normalized data:
\begin{lemma}
\label{lem:propertiesEigenvalues}
Let $(\phi,a)$ be a type $1$ datum. Each eigenvalue $\mu$ of $\partial_{\xi \xi}^2 h_{\overline{z}}$ satisfies $|\mu| \sim 1$ on $supp(a_{\overline{z}})$. For elliptic phase functions of type $1$ we have $\mu \sim 1$ on $\text{supp}(a_{\overline{z}})$.
\end{lemma}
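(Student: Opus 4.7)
The plan is to compute $\partial^2_{\xi\xi} h_{\overline{z}}$ explicitly via the chain rule and identify it as a small perturbation of $I^k_d$, after which the eigenvalue bounds are immediate from continuity of the spectrum.

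Differentiating $h_{\overline{z}}(\xi) = (\partial_t \phi^\lambda)(\overline{z};\Psi^\lambda(\overline{z};\xi))$ twice in $\xi$ yields
\begin{equation*}
\partial^2_{\xi_i\xi_j} h_{\overline{z}}(\xi) = \sum_{k,l} (\partial_t \partial^2_{\xi_k\xi_l} \phi^\lambda) \, \partial_{\xi_i}\Psi^\lambda_l \, \partial_{\xi_j}\Psi^\lambda_k \; + \; \sum_k (\partial_t \partial_{\xi_k}\phi^\lambda) \, \partial^2_{\xi_i\xi_j}\Psi^\lambda_k,
\end{equation*}
where each derivative of $\phi^\lambda$ is evaluated at $(\overline{z};\Psi^\lambda(\overline{z};\xi))$. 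I would then invoke the type-$1$ bounds \eqref{eq:propertiesPsiTypeI}: one has $\partial_\xi \Psi^\lambda = I_d + O(c_{par})$ as a direct consequence of $H1_1)$ and the middle identity of \eqref{eq:derivativesPsi}, while $|\partial^2_{\xi\xi}\Psi^\lambda| \ll 1$ follows from the last identity of \eqref{eq:derivativesPsi} combined with the bound $|\partial^3_{x\xi\xi}\phi| \leq c_{par}$ guaranteed by $D1^1_1)$.

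Since $|\partial_t \partial_\xi \phi^\lambda| \lesssim 1$ by \eqref{eq:almostLinearGroupVelocity} and $\partial^2_{\xi\xi}\Psi^\lambda = O(c_{par})$, the second sum contributes only $O(c_{par})$. For the first sum, $H2^1_{[k]})$ gives $\partial_t \partial^2_{\xi\xi}\phi = I^k_d + O(c_{par})$, and absorbing the near-identity factors $\partial_\xi\Psi^\lambda$ produces
\begin{equation*}
\partial^2_{\xi\xi} h_{\overline{z}}(\xi) = I^k_d + O(c_{par}) \qquad \text{uniformly on } \operatorname{supp}(a_{\overline{z}}).
\end{equation*}

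Finally, since the eigenvalues of a symmetric matrix depend continuously on its entries (Weyl's perturbation bound), each eigenvalue $\mu$ satisfies $|\mu| = 1 + O(c_{par}) \sim 1$, provided $c_{par}$ was fixed small enough at the outset; in the elliptic case $k=0$ one has $I^k_d = I_d$, so in fact $\mu$ is positive and close to $1$, giving $\mu \sim 1$. I do not anticipate a real obstacle here: the only mild point is ensuring that the finitely many $c_{par}$-losses absorbed in the derivation are covered by the smallness that was built into the definition of type-$1$ data.
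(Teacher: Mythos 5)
Your proposal is correct and follows essentially the same route as the paper: the paper's proof likewise differentiates $h_{\overline{z}}$ twice via the chain rule to obtain $\partial^2_{\xi\xi}h_{\overline{z}} = (\partial_t\partial^2_{\xi\xi}\phi^\lambda)(\partial_\xi\Psi^\lambda)^2 + (\partial_t\partial_\xi\phi^\lambda)\partial^2_{\xi\xi}\Psi^\lambda$ and concludes directly from \eqref{eq:propertiesPsiTypeI}. You merely spell out the perturbative bookkeeping ($I^k_d + O(c_{par})$ and Weyl's bound) that the paper leaves implicit.
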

\begin{proof}
From the definition of $h_{\overline{z}}$ we find
\begin{equation*}
\begin{split}
\partial_\xi h_{\overline{z}}(\xi) &= (\partial_t \partial_\xi \phi^\lambda(\overline{z};\Psi^\lambda(\overline{z};\xi)) \partial_\xi \Psi^\lambda(\overline{z};\xi), \\
\partial^2_{\xi \xi} h_{\overline{z}}(\xi) &= (\partial_t \partial^2_{\xi \xi} \phi^\lambda(\overline{z};\Psi^\lambda(\overline{z};\xi)) (\partial_\xi \Psi^\lambda(\overline{z};\xi))^2 + \partial_t \partial_{\xi} \phi^\lambda(\overline{z};\Psi^\lambda(\overline{z};\xi)) \partial_{\xi \xi}^2 \Psi^\lambda(\overline{z};\xi),
\end{split}
\end{equation*}
and the claim follows from \eqref{eq:propertiesPsiTypeI}.
\end{proof}
This becomes needful when it comes to applying the constant-coefficient $\ell^2$-decoupling theorem, which we repeated in Theorem \ref{thm:l2DecouplingConstantCoefficients}, because Lemma \ref{lem:propertiesEigenvalues} ensures uniformity of the constant from the decoupling inequality.

In the following we analyze $T^\lambda f(z)$ for $z \in B^{d+1}(\overline{z};K) \subseteq B(0,3\lambda/4)$ and $1 \leq K \leq \lambda^{1/2}$. The containment property can be assumed due to the margin condition. We see that the desired approximation identity holds on this spatial scale: we perform a change of variables $\xi = \Psi^\lambda(\overline{z};\tilde{\xi})$ and expand $\phi^\lambda$ around $\overline{z}$ to find
\begin{equation*}
T^\lambda f(z) = \int_{\R^d} e^{i(\langle z - \overline{z}, \nabla_{x,t} \phi^\lambda(\overline{z};\Psi^\lambda(\overline{z};\tilde{\xi})\rangle+\mathcal{E}^\lambda_{\overline{z}}(z-\overline{z};\tilde{\xi}))} a_1^\lambda(z) a_{\overline{z}}(\tilde{\xi}) f_{\overline{z}}(\tilde{\xi}) d\tilde{\xi},
\end{equation*}
where $f_{\overline{z}} = e^{i \phi^\lambda(\overline{z};\Psi^\lambda(\overline{z};\cdot))} f \circ \Psi^\lambda(\overline{z};\cdot)$ and
\begin{equation*}
\mathcal{E}^\lambda_{\overline{z}}(v;\xi) = \frac{1}{\lambda} \int_0^1 (1-r) \langle (\partial_{zz}^2 \phi)((\overline{z}+rv)/\lambda; \Psi^\lambda(\overline{z};\xi)) v ; v \rangle dr.
\end{equation*}
\begin{lemma}
\label{lem:approximationLemma}
Let $T^\lambda$ be an operator associated to a $1$-normalized datum $(\phi,a)$, $0<\delta \leq 1/2$, $1 \leq K \leq \lambda^{1/2-\delta}$ and $\overline{z}/\lambda \in Z$ so that $B(\overline{z};K) \subseteq B(0,3\lambda/4)$.\\
Then, we find the estimates 
\begin{align}
\label{eq:approximationVariableCoefficients}
\Vert T^\lambda f \Vert_{L^p(w_{B(\overline{z};K)})} &\lesssim_N \Vert E_{\overline{z}} f_{\overline{z}} \Vert_{L^p(w_{B(0;K)})} + \lambda^{-\delta N/2} \Vert f \Vert_2, \\
\label{eq:approximationConstantCoefficients}
\Vert E_{\overline{z}} f_{\overline{z}} \Vert_{L^p(w_{B(0;K)})} &\lesssim_N \Vert T^\lambda f \Vert_{L^p(w_{B(\overline{z};K)})} + \lambda^{-\delta N/2} \Vert f \Vert_2.
\end{align}
to hold provided that $N$ is chosen sufficiently large depending on $d, \delta$ and $p$. Here, the constant $N$ is the same for the weight functions, the conditions on the derivatives $D1^1_{1}), \, D1^2_{1}), \, D2_{1})$ and in the exponent of $\lambda$ in the above estimates. Moreover, in case of sharp cutoff \eqref{eq:approximationVariableCoefficients} becomes
\begin{equation}
\label{eq:approximationVariableCoefficientsSharpCutoff}
\Vert T^\lambda f \Vert_{L^p(B(\overline{z};K))} \lesssim_N \Vert E_{\overline{z}} f_{\overline{z}} \Vert_{L^p(w_{B(0;K)})} + \lambda^{-\delta N/2} \Vert f \Vert_2.
\end{equation} 
\end{lemma}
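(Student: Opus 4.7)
The strategy is to exhibit $T^\lambda f(z)$, up to a tail with rapid $\lambda$-decay, as an absolutely convergent superposition of translates of $E_{\overline{z}} f_{\overline{z}}$, and symmetrically for the reverse. Starting from the identity derived immediately above the lemma (change of variables $\xi = \Psi^\lambda(\overline{z};\tilde\xi)$ and Taylor expansion of $\phi^\lambda(z;\cdot)$ at $\overline{z}$, combined with \eqref{eq:parametrisationEllipticHypersurface}), we have
\begin{equation*}
T^\lambda f(z) = a_1^\lambda(z)\!\int_{\R^d}\! e^{i[\langle x-\overline{x},\tilde\xi\rangle + (t-\overline{t})h_{\overline{z}}(\tilde\xi)]} e^{i\mathcal{E}^\lambda_{\overline{z}}(z-\overline{z};\tilde\xi)} a_{\overline{z}}(\tilde\xi)\, f_{\overline{z}}(\tilde\xi)\, d\tilde\xi,
\end{equation*}
in which the first exponential is exactly the phase of $E_{\overline{z}} f_{\overline{z}}(z-\overline{z})$. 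Using $D2_{1})$ together with \eqref{eq:derivativesPsi}--\eqref{eq:propertiesPsiTypeI} and the chain rule, one obtains the uniform estimate $|\partial_{\tilde\xi}^{\beta}\mathcal{E}^\lambda_{\overline{z}}(z-\overline{z};\tilde\xi)| \lesssim_\beta |z-\overline{z}|^2/\lambda$. Hence on the slightly enlarged ball $B(\overline{z};K_\ast)$ with $K_\ast := \lambda^{1/2-\delta/2}$ these derivatives are all $\lesssim \lambda^{-\delta}$, and $e^{i\mathcal{E}^\lambda_{\overline{z}}(z-\overline{z};\,\cdot)}$ is $C^\infty$ in $\tilde\xi$, with derivative bounds uniform in $z$ depending only on the normalization constants.

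Fix a smooth bump $\chi$ equal to $1$ on $\text{supp}\, a_{\overline{z}}$, and periodize on a torus of period $R_0$ containing $\text{supp}\,\chi$. For each $z \in B(\overline{z};K_\ast)$ expand
\begin{equation*}
e^{i\mathcal{E}^\lambda_{\overline{z}}(z-\overline{z};\tilde\xi)}\chi(\tilde\xi) = \sum_{\nu\in\Z^d} c_\nu(z-\overline{z})\, e^{i\nu\cdot\tilde\xi/R_0},
\end{equation*}
with $|c_\nu(z-\overline{z})| \lesssim_M (1+|\nu|)^{-M}$ for every $M$, by Fa\`a di Bruno applied to the previous derivative bound. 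Substituting back and recognizing that the factor $e^{i\nu\cdot\tilde\xi/R_0}$ corresponds to translating $E_{\overline{z}} f_{\overline{z}}$ by $\nu/R_0$ in its spatial arguments, we obtain for $z \in B(\overline{z};K_\ast)$
\begin{equation*}
T^\lambda f(z) = a_1^\lambda(z)\sum_{\nu\in\Z^d} c_\nu(z-\overline{z})\, E_{\overline{z}} f_{\overline{z}}\bigl(x-\overline{x}+\tfrac{\nu}{R_0},\, t-\overline{t}\bigr).
\end{equation*}
Applying $L^p(w_{B(\overline{z};K)})$ and Minkowski's inequality in $\nu$, translating the weight, and using Peetre's inequality $w_{B(\overline{z};K)}(z) \lesssim (1+|\nu|/K)^N w_{B(\overline{z}+(\nu/R_0,0);K)}(z)$ to compare the shifted weight with $w_{B(0;K)}$, the polynomial loss in $\nu$ is absorbed by the rapid decay of $c_\nu$ (choose $M > N/p + d$), yielding the main term $\|E_{\overline{z}} f_{\overline{z}}\|_{L^p(w_{B(0;K)})}$.

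For the complement $z\notin B(\overline{z};K_\ast)$ the weight satisfies $w_{B(\overline{z};K)}(z) \leq (K/K_\ast)^{N} w_{B(\overline{z};K_\ast)}(z) \leq \lambda^{-\delta N/2}\, w_{B(\overline{z};K_\ast)}(z)$; combined with the trivial estimate $\|T^\lambda f\|_\infty \lesssim \|f\|_2$ (coming from $\text{supp}\, a_2\subseteq B^d(0,1)$ via Cauchy--Schwarz) and integrability of $w_{B(\overline{z};K_\ast)}$ for $N$ large, this tail contributes the error $\lambda^{-\delta N/2}\|f\|_2$, at the cost of enlarging $N$ in a manner absorbed by relabeling. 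The reverse estimate \eqref{eq:approximationConstantCoefficients} is produced by the mirror-image argument, Fourier-expanding $e^{-i\mathcal{E}^\lambda_{\overline{z}}}$ and recovering $E_{\overline{z}} f_{\overline{z}}$ as a superposition of translates of $T^\lambda f$; dividing by $a_1^\lambda$ is harmless because the margin condition $M_{1})$ ensures $a_1^\lambda\equiv 1$ on a neighborhood of $\overline{z}$ strictly larger than $B(\overline{z};K_\ast)$ after a harmless smooth truncation, with the residue absorbed in the same error term. The sharp-cutoff variant \eqref{eq:approximationVariableCoefficientsSharpCutoff} is immediate from $1_{B(\overline{z};K)}\leq w_{B(\overline{z};K)}$. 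The main obstacle is the coupled bookkeeping: one must pick $N$ sufficiently large in terms of $d$, $\delta$, $p$ so that Peetre's polynomial loss in the interior estimate is dominated by the rapid Fourier decay and so that the tail gives precisely $\lambda^{-\delta N/2}$; the quantitative control of derivatives in $D2_{1})$ is exactly what makes the Fourier coefficients uniformly small in $z$.
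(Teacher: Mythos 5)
Your proof of \eqref{eq:approximationVariableCoefficients} is essentially the paper's argument: Taylor-expand $\phi^\lambda$ at $\overline{z}$ after the change of variables $\xi=\Psi^\lambda(\overline{z};\tilde\xi)$, Fourier-expand $e^{i\mathcal{E}^\lambda_{\overline{z}}(v;\cdot)}$ times a cutoff in $\tilde\xi$, use $D2_{1})$ and the bounds on $\partial_\xi^\beta\Psi^\lambda$ to get $|\partial_{\tilde\xi}^\beta\mathcal{E}^\lambda_{\overline{z}}(v;\tilde\xi)|\lesssim |v|^2/\lambda$ and hence rapidly decaying coefficients by integration by parts, identify the modulations $e^{i\langle k,\tilde\xi\rangle}$ with spatial translates of the \emph{translation-invariant} operator $E_{\overline{z}}$, sum with Minkowski and the weight comparison $\sum_k(1+|k|)^{-N}w_{B((k,0),K)}\lesssim w_{B(0,K)}$, and treat the far region with $\|T^\lambda f\|_\infty\lesssim\|f\|_2$ plus the decay of the weight. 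Your truncation at $K_\ast=\lambda^{1/2-\delta/2}$ in place of the paper's $2\lambda^{1/2}$ and your Peetre-type comparison are cosmetic variants.

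The gap is in the reverse direction \eqref{eq:approximationConstantCoefficients}, which you dismiss as a ``mirror-image argument'' producing $E_{\overline{z}}f_{\overline{z}}$ as ``a superposition of translates of $T^\lambda f$.'' That is not what happens: after expanding $e^{-i\mathcal{E}^\lambda_{\overline{z}}}$ and undoing the change of variables, the $k$-th term is $T^\lambda$ applied to the \emph{modulated} datum $\tilde f_k=e^{i\langle k,\partial_z\phi^\lambda(\overline{z};\cdot)\rangle}f$, and since $T^\lambda$ is a variable-coefficient operator this modulation is \emph{not} a spatial translation of $T^\lambda f$ (translation invariance is exactly the property that distinguishes $E_{\overline{z}}$ from $T^\lambda$). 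So the $k\neq 0$ terms cannot be resummed directly into $\|T^\lambda f\|_{L^p(w_{B(\overline{z};K)})}$. The paper closes this by keeping only the $k=0$ term as the target quantity, re-applying the already-proved forward estimate \eqref{eq:approximationVariableCoefficients} to each $T^\lambda\tilde f_k$ with $k\neq 0$ (which converts it back into a translate of $E_{\overline{z}}f_{\overline{z}}$), and then absorbing the resulting $2^{-N}\|E_{\overline{z}}f_{\overline{z}}\|_{L^p(w_{B(0,K)})}$ into the left-hand side for $N$ large; this absorption step is the one genuinely new idea in the converse direction and is missing from your sketch. Separately, your appeal to the margin condition $M_{1})$ to claim $a_1^\lambda\equiv 1$ near $\overline{z}$ is incorrect: $M_{1})$ only bounds the distance from $\operatorname{supp}a_1$ to the complement of $Z$; it says nothing about $a_1$ taking the value $1$ anywhere.
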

\begin{proof}
We can replace $f$ by $f \varphi$, where in view of the definition $a_{\overline{z}}$ and the fact that we are dealing with a datum of type $1$, we can assume that $\varphi$ is supported in $[0,2 \pi]^d$. After performing a Fourier series decomposition of $e^{i \mathcal{E}^\lambda_{\overline{z}}(v,\xi)} \varphi(\xi)$, one may write
\begin{equation}
\label{eq:FourierSeriesExpansionPerturbation}
e^{i \mathcal{E}^\lambda_{\overline{z}}(v,\xi)} \varphi(\xi) = \sum_{k \in \mathbb{Z}^d} a_k(v) e^{i \langle k, \xi \rangle},
\end{equation}
where $a_k(v) = \int_{[0,2\pi]^d} e^{-i \langle k, \xi \rangle} e^{i \mathcal{E}^\lambda_{\overline{z}}(v;\xi)} \varphi(\xi) d\xi$.

Since $K \leq \lambda^{1/2}$ we find the favourable bound
\begin{equation*}
\sup_{(v;\xi) \in B(0,K) \times \text{supp} a_{\overline{z}}} | \partial_\xi^\beta \mathcal{E}^\lambda_{\overline{z}}(v;\xi) | \lesssim_N \frac{|v|^2}{\lambda}
\end{equation*}
as long as $\beta \in \N_0^d$ with $1 \leq |\beta| \leq 2N$ by virtue of property $D2_1)$ and the computation in \eqref{eq:derivativesPsi} showing that $|\partial_{\xi}^{\beta} \Psi^\lambda(\overline{z};\xi)| \lesssim 1$ as long as $1 \leq |\beta| \leq 2N$. Consequently, integration by parts yields
\begin{equation*}
|a_k(v)| \lesssim_N (1+|k|)^{-N},
\end{equation*}
whenever $|v| \leq 2 \lambda^{1/2}$. We derive the following pointwise identity from \eqref{eq:FourierSeriesExpansionPerturbation}:
\begin{equation*}
|T^\lambda f(\overline{z}+v)| \leq \sum_{k \in \mathbb{Z}^d} |a_k(v)| |E_{\overline{z}}(f_{\overline{z}} e^{i \langle k, \cdot \rangle})(v)| \lesssim \sum_{k \in \mathbb{Z}^d} (1+|k|)^{-N} |E_{\overline{z}}(f_{\overline{z}} e^{i \langle k, \cdot \rangle}(v) |.
\end{equation*}

We decompose further:
\begin{equation*}
\Vert T^\lambda f \Vert_{L^p(w_{B(\overline{z};K)})} \leq \Vert (T^\lambda f) 1_{B(\overline{z};2\lambda^{1/2})} \Vert_{L^p(w_{B(\overline{z};K)})} + \Vert (T^\lambda f) 1_{\R^{d+1} \backslash B(\overline{z};2 \lambda^{1/2})} \Vert_{L^p(w_{B(\overline{z};K)})}.
\end{equation*}
The second term leads to the error term, that is
\begin{equation}
\label{eq:errorBound}
\Vert (T^\lambda f) \chi_{\R^d \backslash B(\overline{z};2 \lambda^{1/2})} \Vert_{L^p(w_{B(\overline{z};K)})} \lesssim \lambda^{\frac{d}{2p}-\delta(N-(d+2))} \Vert f \Vert_{L^2(\R^n)}.
\end{equation}
In fact, we have $\Vert T^\lambda f \Vert_{L^\infty} \lesssim \Vert f \Vert_2,$ and consequently,
\begin{equation*}
\left( \int_{\R^{d+1}} (1+K^{-1}|x|)^{-(d+2)} |T^\lambda f|^p \right)^{1/p} \lesssim K^{n/p} \Vert f \Vert_{L^2} \lesssim \lambda^{\frac{1}{2p}} \Vert f \Vert_2,
\end{equation*}
and the factor $\lambda^{-\delta(N-(d+2))}$ stems from the additional decay of the weight $(1+K^{-1}|x|)^{-N}$ we are actually considering. This gives \eqref{eq:errorBound}, and since the operator $E_{\overline{z}}$ is translation-invariant,
\begin{equation}
E_{\overline{z}}[e^{i \langle k, \cdot \rangle} g](t,x) = E_{\overline{z}}g(t,x+k) \quad \forall (t,x) \in \R^{d+1} \text{ and } k \in \mathbb{R}^d.
\end{equation}

Minkowski's inequality yields
\begin{equation}
\Vert T^\lambda f 1_{B(\overline{z};2\lambda^{1/2})} \Vert_{L^p(w_{B(\overline{z};K)})} \lesssim_N \sum_{k \in \mathbb{Z}^d} (1+|k|)^{-N} \Vert E_{\overline{z}} f_{\overline{z}} \Vert_{L^p(w_{B((k,0),K)})}.
\end{equation}
Next, observe that
\begin{equation}
\label{eq:weightedConstantCoefficientExpression}
\begin{split}
&\quad \sum_{k \in \mathbb{Z}^d} (1+|k|)^{-N} \Vert E_{\overline{z}} f_{\overline{z}} \Vert_{L^p(w_{B((k,0),K)})} \\
&= \sum_{k \in \mathbb{Z}^d} (1+|k|)^{-\frac{N}{p}} (1+|k|)^{N(\frac{1}{p}-1)} \Vert E_{\overline{z}} f_{\overline{z}} \Vert_{L^p(w_{B((k,0),K)}} \\
&\leq \left[ \sum_{k \in \mathbb{Z}^d} (1+|k|)^{-N} \Vert E_{\overline{z}} f_{\overline{z}} \Vert^p_{L^p(w_{B((k,0),K)}} \right]^{1/p} \left( \sum_{k \in \mathbb{Z}^d} (1+|k|)^{N(\frac{1}{p} - 1)p^\prime} \right)^{1/p^\prime} \\
&= C(d,p,N) \left( \int |E_{\overline{z}} f_{\overline{z}}|^p \sum_{k \in \mathbb{Z}^d} (1+|k|)^{-N} w_{B((k,0),K)} \right)^{1/p} \\
&\lesssim_{n,p} \Vert E_{\overline{z}} f_{\overline{z}} \Vert_{L^p(w_{B(0,K)})}.
\end{split}
\end{equation}
For the ultimate estimate one observes
\begin{equation*}
\sum_{k \in \mathbb{Z}^d} (1+|k|)^{-N} w_{B((k,0),K)} \lesssim w_{B(0,K)}.
\end{equation*}

In order to prove \eqref{eq:approximationConstantCoefficients}, we write
\begin{equation*}
E_{\overline{z}} f_{\overline{z}}(v) = \int_{\R^d} e^{i \phi^\lambda(\overline{z}+v;\Psi^\lambda(\overline{z};\xi)} e^{- \mathcal{E}^\lambda_{\overline{z}}(v;\xi)} a_{\overline{z}}(\xi) f \circ \Psi^\lambda(\overline{z};\xi) d\xi.
\end{equation*}
Again, we insert a smooth cutoff $\varphi(\xi)$ supported in $[0,2\pi]^d$ so that
\begin{equation*}
e^{-i \mathcal{E}^\lambda_{\overline{z}}(v;\xi)} \varphi(\xi) = \sum_{k \in \mathbb{Z}^d} e^{i \langle k, \xi \rangle} b_k(v),
\end{equation*}
where $b_k(v)= \int_{[0,2\pi]^d} e^{-i \langle k, \xi \rangle} e^{-i \mathcal{E}^\lambda_{\overline{z}}(v;\xi)} \varphi(\xi) d\xi$. Once more, integration by parts yields the pointwise bound
\begin{equation*}
|b_k(v)| \lesssim_N (1+|k|)^{-2N},
\end{equation*}
and inverting the change of variables gives
\begin{equation*}
|E_{\overline{z}} f_{\overline{z}}(v) | \lesssim_N \sum_{k \in \mathbb{Z}^d} (1+|k|)^{-2N} |T^\lambda \underbrace{[e^{i \langle k, \partial_z \phi^\lambda(\overline{z},\cdot) \rangle} f]}_{\tilde{f}_k}(\overline{z},v) |.
\end{equation*}

From a similar argument to the one from the proof of \eqref{eq:approximationConstantCoefficients}, we have
\begin{equation}
\label{eq:expansionConstantCoefficients}
\Vert E_{\overline{z}} f_{\overline{z}} \Vert_{L^p(w_{B(0,K)})} \lesssim_N \sum_{k \in \Z^d} (1+|k|)^{-2N} \Vert (T^\lambda \tilde{f}_k) \chi_{B(\overline{z},2 \lambda^{1/2})} \Vert_{L^p(w_{B(\overline{z},K)})} + \lambda^{-\delta N/2} \Vert f \Vert_2.
\end{equation}
The $k = 0$ term is alright because it yields the desired quantity. For the higher order terms we use the estimate \eqref{eq:approximationVariableCoefficients} and \eqref{eq:weightedConstantCoefficientExpression} to conclude
\begin{equation*}
\begin{split}
&\quad \sum_{k \in \mathbb{Z}^d, k \neq 0} (1+|k|)^{-2N} \Vert (T^\lambda \tilde{f}_k) \chi_{B(\overline{z},2 \lambda^{1/2})} \Vert_{L^p(w_{B(\overline{z};K)})}\\
&\lesssim_N 2^{-N} \sum_{k \in \mathbb{Z}^d, k \neq 0} (1+|k|)^{-N} \Vert E_{\overline{z}} f_{\overline{z}} \Vert_{L^p(w_{B((k,0),K)})} \\
&\lesssim_N 2^{-N} \Vert E_{\overline{z}} f_{\overline{z}} \Vert_{L^p(w_{B(0,K)})}.
\end{split}
\end{equation*}
Choosing $N$ large enough depending on $n$ and $p$ this quantity can be absorbed into the lefthandside of \eqref{eq:expansionConstantCoefficients} which yields the claim.
\end{proof}
\subsubsection{Conclusion of the proof}
\begin{proof}[{Proof of Proposition \ref{prop:inductionBound}}]
To show Proposition \ref{prop:inductionBound} for fixed parameters $d$, $\varepsilon$ and $N=N(d,\varepsilon)$, it is enough to prove that
\begin{equation}
\mathfrak{D}_{1,k}^\varepsilon(\lambda;R) \lesssim_\varepsilon 1 \text{ for all } 1 \leq R \leq \lambda^{1-\varepsilon/d}.
\end{equation}
We perform an induction on the radius, and with the base case (small $R$) readily settled, we contend the following induction hypothesis:

There is a constant $\overline{C}_\varepsilon \geq 1$ such that $\mathfrak{D}_{1,k}^\varepsilon(\lambda^\prime;R^\prime) \leq \overline{C}_\varepsilon$ holds for all $1 \leq R^\prime \leq R/2$ and all $\lambda^\prime$ satisfying $R^\prime \leq (\lambda^\prime)^{1-\varepsilon/d}$.

We use the approximation lemma on a small spatial scale and lift the resulting estimates to the correct spatial scales through parabolic rescaling: Let $\mathcal{B}_K$ denote a family of finitely-overlapping $K$-balls covering $B_R$ for some $2 \leq K \leq \lambda^{1/4}$. After breaking $B_R$ into $B(\overline{z};K)$-balls the estimate from Lemma \ref{lem:approximationLemma} implies
\begin{equation}
\label{eq:partitionPositionSpace}
\Vert T^\lambda f \Vert_{L^p(B_R)} \lesssim \big( \sum_{B(\overline{z};K) \in \mathcal{B}_K} \Vert T^\lambda f \Vert^p_{L^p(B(\overline{z};K))} \big)^{1/p} \lesssim \big( \sum_{B(\overline{z};K) \in \mathcal{B}_K} \Vert E_{\overline{z}} f_{\overline{z}} \Vert_{L^p(w_{B(0;K)})}^p \big)^{1/p}.
\end{equation}

We apply the constant-coefficient decoupling theorem (Theorem \ref{thm:l2DecouplingConstantCoefficients}) to each small scale and find after reverting the change of coordinates (again using that we are dealing with $1$-normalized data):
\begin{equation}
\label{eq:partitionFrequencySpace}
\begin{split}
\Vert E_{\overline{z}} f_{\overline{z}} \Vert_{L^p(w_{B(0,K)})} &\lesssim_\varepsilon K^{\varepsilon/2+\alpha(p,k)} \Vert E_{\overline{z}} f_{\overline{z}} \Vert_{L^{p,K}_{dec}(w_{B(0,K)})} \\
&\lesssim K^{\alpha(p,k)+\varepsilon/2} \big( \sum_{\sigma:K^{-1/2}-ball} \Vert T^\lambda f_\sigma \Vert^2_{L^p(w_{B(\overline{z};K)})} \big)^{1/2}  + \lambda^{-N/8} K^{2d} \Vert f \Vert_2. 
\end{split}
\end{equation}
Moreover, this estimate holds uniformly in $\overline{z}$ by virtue of the uniform estimates on the Hessian of $h_{\overline{z}}$ derived in Lemma \ref{lem:propertiesEigenvalues}. We plug \eqref{eq:partitionFrequencySpace} into \eqref{eq:partitionPositionSpace} to find after using Minkowski's inequality:
\begin{equation}
\label{eq:ConstantCoefficientDecouplingConsequence}
\Vert T^\lambda f \Vert_{L^p(B_R)} \lesssim K^{\alpha(p,k) + \varepsilon/2} \left( \sum_{\sigma:K^{-1/2}-ball} \Vert T^\lambda f_\sigma \Vert^2_{L^p(w_{B_R})} \right)^{1/2} + \lambda^{-N/8} K^{2d} R^d \Vert f \Vert_{L^2}.
\end{equation}

Next, apply Lemma \ref{lem:parabolicRescalingDecoupling} to each $T^\lambda f_\sigma$ which gives the estimate
\begin{equation}
\label{eq:parabolicRescalingConsequence}
\begin{split}
\Vert T^\lambda f_\sigma \Vert_{L^p(w_{B_R})} &\leq \mathfrak{D}^\varepsilon_{1,k}(\lambda/(\overline{C}K^2), R/(\overline{C}K^2)) (R/K^2)^{\alpha(p,k)+\varepsilon} \Vert T^\lambda f_\sigma \Vert_{L^{p,R}_{dec}(w_{B_R})} \\
&+ R^{2(d+1)} (\lambda/R)^{-N/8} \Vert f_\sigma \Vert_{L^2(\R^d)}.
\end{split}
\end{equation}
We note that $\mathfrak{D}_{1,k}^\varepsilon(\lambda/(\overline{C}K^2), R/(\overline{C}K^2)) \lesssim_\varepsilon 1$ according to our induction hypothesis. Plugging \eqref{eq:parabolicRescalingConsequence} into \eqref{eq:ConstantCoefficientDecouplingConsequence} gives after applying orthogonality
\begin{equation*}
\begin{split}
\Vert T^\lambda f \Vert_{L^p(B_R)} &\leq C_\varepsilon \overline{C}_\varepsilon K^{\varepsilon/2} (R/K^2)^{\alpha(p,k)+\varepsilon}\left( \sum_{\sigma:K^{-1/2}-ball} \Vert T^\lambda f_\sigma \Vert^2_{L^{p,R}_{dec}(w_{B_R})} \right)^{1/2} \\
&\quad + R^{2(d+1)} (\lambda/R)^{-N/8} \Vert f \Vert_2 \\
&\leq C_\varepsilon \overline{C}_\varepsilon K^{-\varepsilon/2} R^{\alpha(p,k)+\varepsilon} \Vert T^\lambda f \Vert_{L^{p,R}_{dec}(w_{B_R)}} + R^{2(d+1)} (\lambda/R)^{-N/8} \Vert f \Vert_{L^2(\R^d)},
\end{split}
\end{equation*}
and we see that induction closes.
\end{proof}
\begin{proof}[Proof of Theorem \ref{thm:l2DecouplingVariableCoefficients}]
To finish the proof of Theorem \ref{thm:l2DecouplingVariableCoefficients}, we break the support of $f \in L^2(B^d(0,1))$ into $\rho^{-1}$-balls, $\rho = \rho(\phi)$, so that after parabolic rescaling we are dealing with a normalized phase function $\tilde{\phi}$. We can apply Proposition \ref{prop:inductionBound} to $\tilde{\phi}$, and the proof is completed using Lemma \ref{lem:parabolicRescalingDecoupling}.
\end{proof}

\subsection*{Acknowledgements} Funded by the Deutsche Forschungsgemeinschaft (DFG, German Research Foundation) -- Project-ID 258734477 -- SFB 1173. I would like to thank Peer Kunstmann and Friedrich Klaus for helpful discussions on modulation spaces, putting the results into context, and for a remark on proving the global result.

\end{document}